\newcommand{\arXiv}[1]{\href{http://arxiv.org/abs/#1}{\tt arXiv:\nolinkurl{#1}}}
\newcommand{\arxiv}[1]{\href{http://arxiv.org/abs/#1}{\tt arXiv:\nolinkurl{#1}}}
\newcommand{\googlebooks}[1]{(preview at \href{http://books.google.com/books?id=#1}{google books})}
\definecolor{dark-red}{rgb}{0.7,0.25,0.25}
\definecolor{dark-blue}{rgb}{0.15,0.15,0.55}
\definecolor{medium-blue}{rgb}{0,0,.8}
\definecolor{DarkGreen}{RGB}{0,150,0}
\theoremstyle{plain}
\newtheorem{thm}{Theorem}[section]
\newtheorem*{thm*}{Theorem}
\newtheorem{alphathm}{Theorem}
\newtheorem{cor}[thm]{Corollary}
\newtheorem*{cor*}{Corollary}
\newtheorem{lem}[thm]{Lemma}
\newtheorem{prop}[thm]{Proposition}
\newtheorem*{quest*}{Question}
\theoremstyle{definition}
\newtheorem{defn}[thm]{Definition}
\newtheorem{assumption}[thm]{Assumption}
\newtheorem{nota}[thm]{Notation}
\newtheorem{ex}[thm]{Example}
\newtheorem{rem}[thm]{Remark}
\newtheorem{fact}[thm]{Fact}
\newtheorem{facts}[thm]{Facts}
\DeclareMathOperator{\dist}{dist}
\DeclareMathOperator{\Epi}{Epi}
\DeclareMathOperator{\depth}{depth}
\DeclareMathOperator{\Gr}{Gr}
\DeclareMathOperator{\id}{id}
\DeclareMathOperator{\op}{op}
\DeclareMathOperator{\spann}{span}
\DeclareMathOperator{\Tr}{Tr}
\DeclareMathOperator{\tr}{tr}
\newcommand{\comment}[1]{}
\newcommand{\be}{\begin{enumerate}[(1)]}
\newcommand{\ee}{\end{enumerate}}
\newcommand{\N}{\mathbb{N}}
\newcommand{\Z}{\mathbb{Z}}
\newcommand{\F}{\mathbb{F}}
\newcommand{\R}{\mathbb{R}}
\newcommand{\C}{\mathbb{C}}
\newcommand{\I}{\infty}
\newcommand{\set}[2]{\left\{#1 \middle| #2\right\}}
\newcommand{\ketbra}[2]{|#1\rangle \langle #2|}
\newcommand{\e}{\epsilon}
\newcommand{\noshow}[1]{}
\newcommand{\MR}[1]{}
\newcommand{\jw}[1]{f^{(#1)}}
\newcommand{\Asterisk}{\mathop{\scalebox{1.5}{\raisebox{-0.2ex}{$*$}}}}%
\newcommand{\TL}{\cT\hspace{-.08cm}\cL}
\newcommand{\TP}[1]{\cT_{#1}(\cP_\bullet)}
\newcommand{\OP}[1]{\cO_{#1}(\cP_\bullet)}
\newcommand{\SP}[1]{\cS_{#1}(\cP_\bullet)}
\newcommand{\FP}[1]{\cF_{#1}(\cP_\bullet)}
\def\semicolon{;}
\def\applytolist#1{
    \expandafter\def\csname multi#1\endcsname##1{
        \def\multiack{##1}\ifx\multiack\semicolon
            \def\next{\relax}
        \else
            \csname #1\endcsname{##1}
            \def\next{\csname multi#1\endcsname}
        \fi
        \next}
    \csname multi#1\endcsname}
\def\calc#1{\expandafter\def\csname c#1\endcsname{{\mathcal #1}}}
\def\bbc#1{\expandafter\def\csname bb#1\endcsname{{\mathbb #1}}}
\def\bfc#1{\expandafter\def\csname bf#1\endcsname{{\mathbf #1}}}
\def\sfc#1{\expandafter\def\csname s#1\endcsname{{\sf #1}}}
\def\ffc#1{\expandafter\def\csname f#1\endcsname{{\mathfrak #1}}}
\tikzstyle{shaded}=[fill=red!10!blue!20!gray!30!white]
\tikzstyle{unshaded}=[fill=white]
\tikzstyle{empty box}=[circle, draw, thick, fill=white, opaque, inner sep=2mm]
\tikzstyle{annular}=[scale=.7, inner sep=1mm, baseline]
\tikzstyle{rectangular}=[scale=.75, inner sep=1mm, baseline=-.1cm]
\newcommand{\nbox}[6]{
	\draw[thick, #1] ($#2+(-#3,-#3)+(-#4,0)$) rectangle ($#2+(#3,#3)+(#5,0)$);
	\coordinate (ZZa) at ($#2+(-#4,0)$);
	\coordinate (ZZb) at ($#2+(#5,0)$);
	\node at ($1/2*(ZZa)+1/2*(ZZb)$) {#6};
}
\begin{document}

\title{$C^*$-algebras from planar algebras II:
\\
{\large the Guionnet-Jones-Shlyakhtenko $C^*$-algebras}
}
\author{Michael Hartglass and David Penneys}
\date{\today}
\maketitle
\begin{abstract}
We study the $C^*$-algebras arising in the construction of Guionnet-Jones-Shlyakhtenko (GJS) for a planar algebra.
In particular, we show they are pairwise strongly Morita equivalent, we compute their $K$-groups, and we prove many properties, such as simplicity, unique trace, and stable rank 1.
Interestingly, we see a $K$-theoretic obstruction to the GJS $C^*$-algebra analog of Goldman-type theorems for II$_1$-subfactors.
This is the second article in a series studying canonical $C^*$-algebras associated to a planar algebra.
\end{abstract}



\section{Introduction}

A finite index subfactor $N\subset M$ is studied by analyzing its standard invariant, which has been axiomatized in three similar ways, each emphasizing slightly different structure: Ocneanu's paragroups \cite{MR996454,MR1642584}, Popa's $\lambda$-lattices \cite{MR1334479}, and Jones' planar algebras \cite{math/9909027}.

In \cite{MR1198815,MR1334479,MR1887878}, Popa starts with a $\lambda$-lattice $A_{\bullet,\bullet}=(A_{i,j})$ and constructs a {\rm II}$_1$-subfactor whose standard invariant is $A_{\bullet,\bullet}$.
In \cite{MR2051399}, Popa and Shlyakhtenko identified the factors in certain cases of Popa's reconstruction theorems.
They showed every subfactor planar algebra arises as the standard invariant of some subfactor $N\subset M$ such that $N,M$ are both isomorphic to $L(\F_\infty)$.

These results were recently reproduced using a combination of planar algebra and free probability techniques.
In \cite{MR2732052,MR2807103}, starting with a subfactor planar algebra $\cP_\bullet$, Guionnet-Jones-Shlyakhtenko (GJS) gave a diagrammatic proof of Popa's celebrated reconstruction theorem, and they showed that the resulting factors are interpolated free group factors when $\cP_\bullet$ is finite depth.
When $\cP_\bullet$ is infinite depth, Hartglass showed the factors are $L(\F_\infty)$ \cite{MR3110503}.

From the subfactor planar algebra $\cP_\bullet$, GJS use a graded multiplication $\wedge_k$ and the Voiculescu trace $\tau_k$ to form a tower of graded algebras $(\Gr_k)_{k\geq 0}$ which completes to a Jones tower of {\rm II}$_1$-factors $(\cM_k)_{k\geq 0}$ which recovers the planar algebra.

$$
\tau_k(x\wedge_k y)
=
\tau_k\left(
\begin{tikzpicture}[baseline = .1cm]
	\draw (-.8, 0)--(2, 0);
	\draw (0, 0)--(0, .8);
	\draw (1.2, 0)--(1.2, .8);
	\nbox{unshaded}{(0,0)}{.4}{0}{0}{$x$}
	\nbox{unshaded}{(1.2,0)}{.4}{0}{0}{$y$}
	\node at (-.6, .2) {{\scriptsize{$k$}}};
	\node at (.6, .2) {{\scriptsize{$k$}}};
	\node at (1.8, .2) {{\scriptsize{$k$}}};
\end{tikzpicture}
\right)
=
\delta^{-k}
\begin{tikzpicture}[baseline=.3cm]
	\draw (.4,0)--(.4,.6);
	\draw (-.4,0)--(-.4,.6);
	\draw (-.7,0) arc (90:270:.3cm) -- (.7,-.6) arc (-90:90:.3cm) -- (-.7,0);
	\nbox{unshaded}{(-.4,0)}{.3}{0}{0}{$x$}
	\nbox{unshaded}{(.4,0)}{.3}{0}{0}{$y$}
	\nbox{unshaded}{(0,.8)}{.3}{.4}{.4}{$\Sigma \, \TL$}
	\node at (1.1, -.4) {{\scriptsize{$k$}}};		
\end{tikzpicture}
$$

Starting with the tower of graded algebras $(\Gr_k)_{k\geq 0}$, we could instead look at the $C^*$-algebra completion to get a tower of $C^*$-algebras $(\cA_k)_{k\geq 0}$.
Two motivations for doing so are as follows:
\be
\item
\underline{Non-commutative topology:}
A unital abelian $C^*$-algebra $A$ is isomorphic to $C(X)$ for a compact Hausdorff $X$, and the space $X$ can be recovered from $A$ via its characters.
Thus an arbitrary $C^*$-algebra can be thought of as functions on a non-commutative topological space.
However, taking the von Neumann completion of a $C^*$-algebra forgets the non-commutative topology remembered by the $C^*$-algebra.
Hence the algebras $\cA_k$ in the $C^*$-tower remember topological invariants of the planar algebra $\cP_\bullet$, e.g. $K$-theory, while still allowing us to recover $\cP_\bullet$ from the relative commutants.
\item
\underline{Non-commutative geometry:}
For the planar algebra of non-commutative polynomials, where tangles act by contracting indices, $\Gr_0$ is an algebra of non-commutative polynomials, and $\cA_0$ is Voiculescu's reduced $C^*$-algebra generated by free semi-circular elements.
Thus we may view $\Gr_0$ as an algebra of non-commutative polynomials inside the algebra of non-commutative continuous functions $\cA_0$.
This is precisely the situation to look for non-commutative geometry \`{a} la Connes via Dirac operators and spectral triples \cite{MR1303779}.
\ee

This article addresses the first motivation listed above: determining properties of the $\cA_k$'s, and computing $K$-theory.
We work primarily with an unshaded factor planar algebra $\cP_\bullet$ with one strand type, and we provide remarks to translate to the case of a shaded subfactor planar algebra.
The principal graph of $\cP_\bullet$ is denoted $\Gamma$.

Our main tool is the functorial construction from the first part of this series \cite{CStarFromPAs}, based on Pimsner's algebras in \cite{MR1426840}.
In \cite{CStarFromPAs}, we constructed a canonical Hilbert $C^*$-bimodule $\cX(\cP_\bullet)$ associated to a planar algebra $\cP_\bullet$, producing the Pimsner-Toeplitz algebra $\TP{}$, the Cuntz-Pimsner algebra $\OP{}$ \cite{MR1426840}, and the free semicircular algebra $\SP{}$. 
We will see in Lemma \ref{lem:AInftyIso} that $\SP{}$ is isomorphic to the semi-fnite GJS algebra \cite{MR2807103} defined below in Subsection \ref{sec:GJSConstruction}.

Taking (reduced) compressions of these algebras yields numerous canonical $C^*$-algebras, including the universal (Toeplitz-)Cuntz-Krieger algebras $\cT_{\vec{\Gamma}},\cO_{\vec{\Gamma}}$ \cite{MR561974,MR1722197}, the free semicircular graph algebra $\cS(\Gamma)$ \cite{CStarFromPAs}, the Doplicher-Roberts algebra $\cO_\rho$ \cite{MR1010160}, and the zeroth GJS algebra $\SP{0}\cong\cA_0$ \cite{MR2732052,MR2645882}.
In fact, all the $\cA_k$ arise as compressions of the semifinite algebra.
See Section \ref{sec:GradedAndFiltered} and \cite{CStarFromPAs} for more details.

Using the free graph algebra $\cS(\Gamma)$, we prove the following main theorems about the $\cA_k$.
First, for the Jones tower $(\cM_k)_{k\geq 0}$, we know that $\cM_k$ is strongly Morita equivalent \cite{MR0353003,MR0367670} to $\cM_{k+2}$ for all $k\geq 0$ for a shaded subfactor planar algebra.
For unshaded factor planar algebras, we get the following theorem.

\begin{alphathm}\label{thm:MainRME}
The algebras $\cA_k$ are pairwise strongly Morita equivalent.
\end{alphathm}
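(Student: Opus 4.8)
The plan is to realize every $\cA_k$ as a full corner of one fixed $C^*$-algebra and then invoke the standard fact that corners by full projections of a given $C^*$-algebra are pairwise strongly Morita equivalent \cite{MR0353003,MR0367670}. The fixed algebra will be the semifinite algebra $\SP{}$, which by Lemma~\ref{lem:AInftyIso} is the semifinite GJS algebra; its vertex-and-edge structure is governed by the principal graph $\Gamma$ and is made explicit by the free semicircular graph algebra $\cS(\Gamma)$.

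First I would use the compression machinery of \cite{CStarFromPAs}, together with $\SP{0}\cong\cA_0$, to identify $\cA_k\cong\SP{k}=f_k\,\SP{}\,f_k$, where $f_k$ is the ``level-$k$'' projection, equal to the sum of the vertex projections $p_v$ over the vertices $v$ of $\Gamma$ of depth $k$. This requires matching the GJS grading index $k$ with the depth filtration on $\Gamma$, and it is where most of the setup lies. In the graph-algebraic picture each vertex $v$ contributes a projection $p_v$ and each edge $e$ a partial isometry $S_e$ satisfying $S_e^*S_e=p_{r(e)}$ and $S_e=p_{s(e)}S_e$.

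The key step is to show that each $f_k$ is full, i.e.\ that the closed two-sided ideal it generates is all of $\SP{}$. Along a single edge $e$ one has $p_{r(e)}=S_e^*S_e=S_e^*p_{s(e)}S_e$, so $p_{r(e)}$ lies in the ideal generated by $p_{s(e)}$; since the edges of $\Gamma$ appear in both orientations in the underlying directed graph, the roles of $s(e)$ and $r(e)$ may be interchanged. Because the principal graph $\Gamma$ of a subfactor is connected, an induction along edges shows that the ideal generated by any single vertex projection contains every $p_v$, and hence equals $\SP{}$. As each $f_k$ dominates a vertex projection, every $f_k$ is full.

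Finally, for full projections $f_j$ and $f_k$ the imprimitivity bimodule $f_j\,\SP{}\,f_k$, equipped with the natural $\SP{}$-valued inner products, implements a strong Morita equivalence between $\cA_j=f_j\,\SP{}\,f_j$ and $\cA_k=f_k\,\SP{}\,f_k$, which proves the theorem. I expect the main obstacle to be the first step: pinning down the corner description $\cA_k\cong f_k\,\SP{}\,f_k$ and correctly matching the GJS grading with the depth filtration on $\Gamma$. Once that identification is secured, the connectivity argument establishing fullness, and the resulting strong Morita equivalences, are routine.
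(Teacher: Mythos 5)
Your overall strategy coincides with the paper's: realize each $\cA_k$ as a full corner of the semifinite algebra $\SP{}\cong\cS(\Gamma)\otimes\cK$ and conclude by the standard fact that full corners are strongly Morita equivalent. The corner identification you flag as the main obstacle is actually the part that is already available from Part I: $\cA_k\cong 1_k\SP{}1_k$ with $1_k$ the $k$-strand diagram (Facts \ref{rem:WalkerMult}, Corollary \ref{cor:ZerothGJS}). Note, however, that your guess for the projection is wrong: $1_k$ is \emph{not} the sum of the depth-$k$ vertex projections; it decomposes as a sum of minimal projections equivalent to the various $p_\alpha$ with $\depth(\alpha)\leq k$, counted with multiplicity (this is how Corollary \ref{cor:allME} is deduced in the paper).

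The genuine gap is in your fullness argument, which is the actual content of the theorem. You treat $\SP{}$ as if it were a Cuntz--Krieger graph algebra, positing for each edge a partial isometry $S_\e$ with $S_\e^*S_\e=p_{r(\e)}$, so that fullness follows from connectivity of $\Gamma$ alone. But $\SP{}$ and $\cS(\Gamma)$ are \emph{free semicircular} graph algebras, not graph $C^*$-algebras: the generator attached to an edge is the element $g_{\e'}$, and $g_{\e'}^*g_{\e'}$ is not a projection --- it has a Free-Poisson distribution (Facts \ref{facts:edge}). The relation $S_\e^*S_\e=p_{r(\e)}$ simply fails, and the failure matters in exactly the direction you need. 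For an edge with source $\star$ (which carries the minimal weight $\mu(\star)=1$) and target a neighbor $\alpha$, the element $g_{\e'}^*g_{\e'}=g_{\e'}^*p_\star g_{\e'}$ is supported under $p_\alpha$ and lies in the ideal generated by $p_\star$, but its distribution in $p_\alpha\cS(\Gamma)p_\alpha$ has $0$ in its support --- indeed an atom at $0$ of size $\mu(\alpha)-1$ when $\mu(\alpha)>1$ --- so it is not invertible in the corner, and its polar part need not lie in the $C^*$-algebra. Worse, inside the commutative $C^*$-algebra $C^*(p_\alpha,g_{\e'}^*g_{\e'})$ the ideal generated by $g_{\e'}^*g_{\e'}$ is proper (it consists of functions vanishing at $0$), so no single-edge argument can ever put $p_\alpha$ into the ideal generated by $p_\star$. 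This is precisely where the paper has to work: in the proof of Theorem \ref{thm:RMEGraph} one gathers \emph{all} edges $\e_1,\dots,\e_n$ with target $\alpha$, observes that $p_\alpha\cS(\Gamma)p_\alpha$ contains the free product $\Asterisk_{i=1}^n\bigl(C[0,1]\oplus\C\bigr)$ with weights $\min\{\mu(s(\e_i)),\mu(\alpha)\}$, and uses the Frobenius--Perron condition $\sum_i\min\{\mu(s(\e_i)),\mu(\alpha)\}\geq\mu(\alpha)$ together with Dykema's simplicity theorem (Theorem \ref{thm:diffuse}) to conclude that this free product is simple; only then does the nonzero ideal element $g_{\e'}^*g_{\e'}$ force $p_\alpha$ itself into the ideal generated by $p_\star$, and induction along $\Gamma$ finishes. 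Without this free-probability input (or some substitute for it), connectivity of $\Gamma$ does not yield fullness, so your proof is incomplete at its central step.
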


When $\cP_\bullet$ is shaded, we get two flavors of algebras: those equivalent to $\cA_{0,+}$, and those equivalent to $\cA_{0,-}$, where the $\pm$ refers to the shading.
More precisely $\cA_{n,\pm}$ is strongly Morita equivalent to $\cA_{n+1,\mp}$ for all $n\geq 0$.
When $\cP_\bullet$ is finite depth, Theorem \ref{thm:MainRME} has a simple proof for $k>\depth(\cP_\bullet)-1$ using a Pimsner-Popa basis for $\cP_{n}$ over $\cP_{n-1}$.

Second, since the $\cA_k$'s are all compressions of $\SP{}$, we have the following.

\begin{alphathm}
$K_0(\cA_k)\cong \bbZ[\set{[p_\alpha]}{\alpha\in V(\Gamma)}]$ and $K_1(\cA_k)=(0)$.
\end{alphathm}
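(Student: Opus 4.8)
The plan is to reduce everything to a single algebra and then to import the $K$-theory of the free semicircular (Fock-type) construction from the Pimsner machinery of \cite{CStarFromPAs,MR1426840}. First I would use the fact, recorded in Lemma \ref{lem:AInftyIso} and in the discussion of Section \ref{sec:GradedAndFiltered}, that each $\cA_k$ is a full corner $p\,\SP{}\,p$ of the semifinite algebra $\SP{}$; a full corner is strongly Morita equivalent to the ambient algebra, so each $\cA_k$ is strongly Morita equivalent to $\SP{}$. (This also re-derives Theorem \ref{thm:MainRME} and shows the whole family shares one $K$-theory.) Since strong Morita equivalence of $\sigma$-unital $C^*$-algebras yields stable isomorphism, and $K$-theory is a stable invariant, it suffices to compute $K_*(\SP{})$ once and then to track the classes $[p_\alpha]$ of the vertex projections.

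Next I would set up the Pimsner picture over the base $A=\bigoplus_{\alpha\in V(\Gamma)}\bbC p_\alpha$ (a $c_0$-direct sum when $\Gamma$ is infinite), for which $K_0(A)\cong\bbZ[\set{[p_\alpha]}{\alpha\in V(\Gamma)}]$ with the $[p_\alpha]$ as free generators and $K_1(A)=(0)$. The algebra $\SP{}$ is assembled from the correspondence $\cX(\cP_\bullet)$ over $A$, and the decisive structural input is the free-probabilistic analogue of Pimsner's theorem that the Toeplitz–Pimsner algebra $\TP{}$ is $KK$-equivalent to its coefficient algebra: namely, that the inclusion $A\hookrightarrow\SP{}$ is a $KK$-equivalence. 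Granting this, the induced map $K_*(A)\to K_*(\SP{})$ is an isomorphism sending $[p_\alpha]\mapsto[p_\alpha]$, giving $K_0(\SP{})\cong\bbZ[\set{[p_\alpha]}{\alpha\in V(\Gamma)}]$ and $K_1(\SP{})=(0)$, and transporting back along the Morita equivalences yields the statement for every $\cA_k$. It is worth emphasizing that this is precisely where $\SP{}$ behaves like the Toeplitz algebra $\TP{}$ rather than the Cuntz–Pimsner algebra $\OP{}$: the six-term sequence for $\OP{}$ would instead produce the cokernel of $1-[\cX(\cP_\bullet)]$, which is the source of the $K$-theoretic obstruction advertised in the abstract.

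The hard part will be establishing this last $KK$-equivalence, since the semicircular generators $\ell_i+\ell_i^*$ do not generate a Toeplitz–Pimsner algebra on the nose, and $\SP{}$ is genuinely smaller than $\TP{}$ (already for a single generator $\SP{}\cong C([-2,2])$ while $\TP{}$ is the Toeplitz algebra). I would handle this by interpolating at the level of the correspondence — deforming the creation-plus-annihilation operators toward the bare creation operators through a norm-continuous path that preserves the relevant ideal structure — so that $\SP{}$, $\TP{}$, and $A$ all lie in one $KK$-equivalence class; verifying the continuity and ideal-compatibility of this deformation is the real work. The remaining points, namely a careful treatment of the infinite-depth case (where $A$ and $\cX(\cP_\bullet)$ are only $\sigma$-unital and one invokes continuity of $K$-theory along the inductive limit over finite subgraphs of $\Gamma$), together with checking fullness of the compressing projections and that $[p_\alpha]\mapsto[p_\alpha]$, are routine bookkeeping.
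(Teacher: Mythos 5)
Your overall architecture (realize each $\cA_k$ as a full corner of $\SP{}$, transport $K$-theory along the resulting Morita equivalence, compute $K_*(\SP{})$ once) matches the paper's, but both load-bearing steps are missing, and the first of the two is mislabeled as easy. You declare ``checking fullness of the compressing projections'' to be routine bookkeeping, but fullness is precisely Theorem \ref{thm:MainRME}, and proving it is the main content of Section \ref{sec:Morita}. Concretely, writing $\cA_0=p_\star\cS(\Gamma)p_\star$, the ideal generated by $p_\star$ contains the positive elements $g_{\e'}^*p_\star g_{\e'}$ supported under $p_\alpha$ for $\alpha$ adjacent to $\star$, but by Facts \ref{facts:edge} their distributions in $p_\alpha\cS(\Gamma)p_\alpha$ have an atom at $0$ whenever $\mu(\alpha)>1$, so they are not invertible in that corner and functional calculus alone does not reach $p_\alpha$. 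The paper's proof of Theorem \ref{thm:RMEGraph} gets around this by exhibiting inside $p_\alpha\cS(\Gamma)p_\alpha$ a free product $\Asterisk_{i=1}^{n}\left(\underset{\min\{\mu(s(\e_i)),\mu(\alpha)\}}{C[0,1]}\oplus\C\right)$ which is simple by Dykema's Theorem \ref{thm:diffuse} \emph{because} the quantum-dimension weighting satisfies the Frobenius--Perron inequality $\sum_{i}\min\{\mu(s(\e_i)),\mu(\alpha)\}\geq\mu(\alpha)$, and then inducting along $\Gamma$; Corollary \ref{cor:allME} then handles the projections $1_k$. This free-probability input cannot be waved away: without it your Morita equivalences, and Theorem \ref{thm:MainRME} which you claim to re-derive, are assumptions rather than conclusions.

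The second gap is the computation of $K_*(\SP{})$ itself, and here your proposed method would fail, not merely needs detail. There is no principle that a norm-continuous path of generating sets yields a constant $KK$-class for the generated $C^*$-algebras, and your own single-generator example shows why: for $T_t=\ell+t\ell^*$ one computes $[T_t^*,T_t]=(1-t^2)P_\Omega$ with $P_\Omega$ the vacuum projection, so $C^*(1,T_t)$ contains $P_\Omega$, hence all compacts, and is in fact the Toeplitz algebra for every $t<1$, while at $t=1$ it is the commutative algebra $C([-2,2])$. The generated algebras and their ideal structures jump discontinuously at the endpoint, so continuity of the generators proves nothing; the $KK$-equivalence of the endpoints is exactly what is at stake. (A smaller inaccuracy: the correspondence $\cX$ is over the AF algebra $\cB$, not over $\bigoplus_\alpha\C p_\alpha$.) The paper does not attempt any such deformation. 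It imports the computation from Part I as Facts \ref{rem:WalkerMult}(1) and Fact \ref{fact:KTheory}: $K_0(\SP{})\cong K_0(\cS(\Gamma,\mu))\cong\bbZ[\set{[p_\alpha]}{\alpha\in V(\Gamma)}]$ and $K_1=(0)$, proved there via Germain's $K$-theory exact sequences for reduced amalgamated free products applied to the decomposition $\cS(\Gamma,\mu)=\Asterisk_{C_0(V(\Gamma))}(\cS_\e,E_\e)$. To complete your argument you should either cite that fact, as the paper does, or carry out a Germain-style free-product computation; the deformation sketch does neither.
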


As a corollary of Theorem \ref{thm:MainRME}, we get the existence of Pimsner-Popa bases for $\cA_{k+1}$ over $\cA_k$, but interestingly, there are $K$-theoretic obstructions to the size of these bases.
We immediately see that $\cA_k\subset \cA_{k+1}$ has the same Watatani index \cite{MR996807} as $\cM_k\subset \cM_{k+1}$, which must be equal to the Jones index \cite{MR696688}.
See Section \ref{sec:CStarTower} for more details.
Interestingly, we get the following corollary.

\begin{cor*}
For the planar algebra $\cP_\bullet$ associated to the group subfactor $M\subset M\rtimes G$ for a finite group $G$, $\cA_1$ is not isomorphic to $\cA_0\rtimes G$ (see Example \ref{ex:GroupAction}).
\end{cor*}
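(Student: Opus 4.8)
The plan is to read off from $\cP_\bullet$ the data controlling both $K_0$ and the trace, and then to separate $\cA_1$ from $\cA_0\rtimes G$ by a single invariant refined from $K_0$, namely the range of the trace on $K_0$. Write $n=|G|$. The principal graph $\Gamma$ of $M\subset M\rtimes G$ is the star $S_n$, with one central (odd) vertex $X$ of dimension $\dim(X)=\sqrt n$ joined to the $n$ leaf (even) vertices $\{\beta_g\}_{g\in G}$, each of dimension $1$, so $\delta=\sqrt n$. By the computation of $K_0$ above, $K_0(\cA_1)\cong\bbZ^{\,n+1}$ and $K_1(\cA_1)=(0)$, and likewise for $\cA_0$. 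Comparing only the ranks of $K_0$ separates $\cA_1$ from $\cA_0\rtimes G$ exactly when the rank $c(G)+1$ of $K_0(\cA_0\rtimes G)$ differs from $n+1$, i.e.\ only for non-abelian $G$ (where the number $c(G)$ of conjugacy classes is $<n$); to treat all $G$ at once I would instead pair $K_0$ against the unique tracial state.

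First I would pin down the range $\tau_*(K_0(\cA_1))\subseteq\R$ of the unique trace $\tau$. Since $\tau$ is the canonical Markov trace, its values on the minimal projections $p_\alpha$ that generate $K_0(\cA_1)$ are proportional to the Perron--Frobenius weights $\dim(\alpha)\in\{1,\sqrt n\}$, with the scale fixed by $\tau(1_{\cA_1})=1$; placing the unit at the depth-$1$ vertex $X$ gives $\tau_*(K_0(\cA_1))=\tfrac1{\sqrt n}\bbZ[\sqrt n]$, and in any case $\tau_*(K_0(\cA_1))\subseteq\tfrac1{\sqrt n}\bbZ[\sqrt n]$ (here $\tfrac1{\sqrt n}\bbZ[\sqrt n]$ denotes the additive group $\tfrac1{\sqrt n}(\bbZ+\bbZ\sqrt n)$). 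The arithmetic heart of the argument is that $\tfrac1n\notin\tfrac1{\sqrt n}\bbZ[\sqrt n]$ for every $n\ge 2$: if $\tfrac1n=\tfrac{a}{\sqrt n}+b$ with $a,b\in\bbZ$, then $\tfrac1{\sqrt n}=a+b\sqrt n$, whence $\tfrac{\sqrt n}{n}=a+b\sqrt n$; for $n$ a non-square this forces $a=0$ and $b=\tfrac1n\notin\bbZ$, while for $n=m^2$ one has $\tfrac1{\sqrt n}\bbZ[\sqrt n]=\tfrac1m\bbZ\not\ni\tfrac1{m^2}$ for $m\ge 2$.

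Next I would exhibit a projection of trace $\tfrac1n$ in $\cA_0\rtimes G$. Because the GJS trace $\tau$ on $\cA_0$ is $G$-invariant, $\tilde\tau:=\tau\circ E$ is a tracial state on $\cA_0\rtimes G$, where $E$ is the canonical conditional expectation onto $\cA_0$. The averaging element $e=\tfrac1{|G|}\sum_{g\in G}u_g$ is a projection, since $u_g^*=u_{g^{-1}}$ and $u_gu_h=u_{gh}$ give $e^*=e$ and $e^2=e$, and $\tilde\tau(e)=\tfrac1{|G|}=\tfrac1n$ because $\tilde\tau(u_g)=\delta_{g,e}$. Hence $\tfrac1n\in\tilde\tau_*(K_0(\cA_0\rtimes G))$. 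Now suppose there were an isomorphism $\varphi\colon\cA_1\to\cA_0\rtimes G$. Then $\tilde\tau\circ\varphi$ is a tracial state on $\cA_1$, so by uniqueness of the trace it equals $\tau$, and therefore $\tau_*(K_0(\cA_1))=(\tilde\tau\circ\varphi)_*(K_0(\cA_1))=\tilde\tau_*(K_0(\cA_0\rtimes G))\ni\tfrac1n$, contradicting $\tau_*(K_0(\cA_1))\subseteq\tfrac1{\sqrt n}\bbZ[\sqrt n]\not\ni\tfrac1n$. This rules out $\cA_1\cong\cA_0\rtimes G$ for every finite group with $|G|\ge 2$.

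The routine inputs are the identification of $\Gamma$, the relation $e^2=e=e^*$, and the value $\tilde\tau(e)=\tfrac1n$. The main obstacle is the precise determination of $\tau_*(K_0(\cA_1))$: one must justify, from the GJS construction, that every minimal projection generating $K_0(\cA_1)$ has trace lying in $\tfrac1{\sqrt n}\bbZ[\sqrt n]$ (equivalently, fix the Perron--Frobenius weights and the normalization by locating the unit of $\cA_1$ at the central vertex), and one must verify that the GJS trace on $\cA_0$ is genuinely $G$-invariant so that $\tilde\tau$ is a trace. Everything else reduces to arithmetic in $\bbZ[\sqrt n]$.
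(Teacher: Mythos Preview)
Your argument is correct and genuinely different from the paper's. Both proofs are $K$-theoretic, but they detect the obstruction differently. The paper (Example~\ref{ex:GroupAction}) observes that a crossed-product decomposition $\cA_{1,+}\cong\cA_{0,+}\rtimes G$ compatible with the inclusion would produce a Pimsner--Popa basis of $|G|$ unitaries, forcing the relation $[1_{2,+}]=|G|\,[1_{0,+}]$ in $K_0$; this fails because $\cP_\bullet$ has depth~$2$, so $[1_{2,+}]$ is not a multiple of $[1_{0,+}]$. You instead compare the \emph{range of the trace} on $K_0$: the averaging projection $\tfrac{1}{|G|}\sum_g u_g$ in any crossed product $\cA_0\rtimes G$ has trace $\tfrac{1}{n}$, while $\tau_*(K_0(\cA_1))\subseteq\tfrac{1}{\sqrt n}\,\bbZ[\sqrt n]$ by the paper's $K_0$ computation together with the Perron--Frobenius weights, and a short arithmetic check gives $\tfrac{1}{n}\notin\tfrac{1}{\sqrt n}\,\bbZ[\sqrt n]$ for $n\ge 2$. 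Your route is arguably cleaner: it bypasses the Pimsner--Popa basis machinery of Section~\ref{sec:CStarTower}, it uses only the unique-trace result and the description of $K_0$, and it actually proves the slightly stronger statement that $\cA_1\not\cong\cA_0\rtimes G$ as abstract $C^*$-algebras for \emph{any} $G$-action on $\cA_0$ (invariance of $\tau$ being automatic from uniqueness). The paper's approach, by contrast, fits naturally into its discussion of basis-size obstructions in Theorem~\ref{thm:PPSize}.

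One minor correction: in the shaded setting, $K_0(\cA_{1,+})$ is indexed by the vertices of $\Gamma_-$ (not $\Gamma_+$), so its rank is $c(G)+1$ rather than $n+1$. This does not affect your argument, since the quantum dimensions on $\Gamma_-$ are the degrees of the irreducible representations together with $\sqrt n$, and these still generate $\bbZ[\sqrt n]$ (the trivial representation contributes~$1$). Also, the two ``obstacles'' you flag at the end are not obstacles here: the trace values on the generators $[p_\alpha]$ are exactly the quantum dimensions normalized by $\Tr(1_1)=\delta=\sqrt n$, which the paper establishes, and $G$-invariance of $\tau$ on $\cA_0$ follows immediately from the uniqueness of the tracial state.
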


Next, using free probability techniques, we prove the following.

\begin{alphathm}\label{thm:Properties}
$\cA_0$ is simple, has unique tracial state $\Tr$, has stable rank 1, and
\begin{align*}
K_0(\cA_0)^+
&= 
\set{x\in K_0(\cA_k)}{\Tr(x)>0}\cup\{0\} 
\text{ and }\\
\Sigma(\cA_0) 
&= 
\set{[p]}{p \in P(\cA_0)}= \set{x \in K_{0}(\cA_{0})}{ \tr(x) \in (0, 1) } \cup \{[1], [0]\}.
\end{align*}
\end{alphathm}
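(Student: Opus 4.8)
The plan is to deduce every assertion from a free-product description of $\cA_0$ together with the free-probabilistic structure theory of Dykema, R\o rdam, and Haagerup. Using the identification $\SP{0}\cong\cA_0$ and the realization of these algebras as compressions of $\SP{}$, I would first exhibit $\cA_0$ as a reduced free product of finite-dimensional algebras, amalgamated over the abelian subalgebra $B=\spann\set{p_\alpha}{\alpha\in V(\Gamma)}$, taken with respect to the GJS trace; the free graph algebra $\cS(\Gamma)$ provides an alternative such model. The semicircular family indexed by the edges of $\Gamma$ is what produces this free-product structure, and the trace $\Tr$ is faithful because it extends the Voiculescu trace $\tau_0$, which is already faithful on the dense graded algebra $\Gr_0$.

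With this description in hand, simplicity and uniqueness of the trace follow from the Avitzour--Dykema criteria for reduced free products: the semicircular generators provide elements with large, diffuse spectral distributions (and the associated Haar-unitary-type partial isometries) that one uses to show that no proper two-sided ideal can survive and that any tracial state must agree with $\Tr$ on $\Gr_0$. I would verify the relevant non-degeneracy hypothesis directly from the freeness of the edge-semicirculars over $B$, and then invoke the corresponding theorem to conclude that $\cA_0$ is simple with unique tracial state $\Tr$.

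Stable rank $1$ is the step I expect to be the main obstacle. The plan is to invoke the Dykema--Haagerup--R\o rdam theorem that reduced free products of finite-dimensional algebras have stable rank $1$, applied to the free-product description above. The difficulty is that this description is amalgamated over the finite-dimensional $B$, whereas the cleanest form of the theorem is stated for ordinary reduced free products; I would reconcile this either by checking that the amalgamated case falls within the available generalizations, or by passing to a full corner $p_\alpha\cA_0 p_\alpha$, where the structure simplifies and the hypotheses can be checked, establishing stable rank $1$ there, and transferring it back to $\cA_0$ using stable finiteness together with fullness of $p_\alpha$ in the (now known) simple algebra $\cA_0$.

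Finally, the ordered $K$-theory rests on Theorem B, which gives $K_0(\cA_0)\cong\bbZ[\set{[p_\alpha]}{\alpha\in V(\Gamma)}]$ and $K_1(\cA_0)=(0)$. The inclusion $K_0(\cA_0)^+\subseteq\set{x}{\Tr(x)>0}\cup\{0\}$ is immediate from positivity and faithfulness of $\Tr$, and faithfulness also forces any projection $p\neq 0,1$ to satisfy $\Tr(p)\in(0,1)$, giving one half of the scale identity. For the reverse inclusions I would use the abundance of projections in free products (Dykema--R\o rdam): one can build a projection of any prescribed trace in $(0,1)$, and stable rank $1$ then supplies cancellation, so that every class $x\in K_0(\cA_0)$ with $\Tr(x)\in(0,1)$ is realized by an honest projection in $\cA_0$ itself rather than only in a matrix amplification. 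This simultaneously establishes the stated description of $K_0(\cA_0)^+$ and of the scale $\Sigma(\cA_0)$.
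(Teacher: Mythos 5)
There is a genuine gap, and it sits exactly where you predicted the main obstacle would be. Your starting point --- that $\cA_0$ is a reduced free product of (finite-dimensional or tracial) algebras amalgamated over $B=\spann\set{p_\alpha}{\alpha\in V(\Gamma)}$ --- is not correct: that is the structure of the free graph algebra $\cS(\Gamma)$ (equivalently of $\cA_\infty$), whereas $\cA_0=p_\star\cS(\Gamma)p_\star$ is only a corner of it, and it does not even contain the projections $p_\alpha$ for $\alpha\neq\star$. Compressing by $p_\star$ destroys the free-product structure in general. The paper splits into cases accordingly: when $\Gamma$ is a union of two graphs meeting only at $\star$, or when $\cP_\bullet$ is irreducible, one really can write $\cA_0$ (after a compression lemma and a polar-decomposition trick) as a reduced free product over the scalars, and then the Avitzour/Dykema/Dykema--Haagerup--R{\o}rdam theorems apply just as you propose. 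But in the remaining case --- when there is an edge from $\star$ to some $\alpha$ together with a second path from $\star$ to $\alpha$ avoiding that edge (the paper's Assumption \ref{assume:Cycle}, e.g.\ any principal graph containing a cycle through $\star$) --- the paper states explicitly that $\cA_0$ \emph{cannot} be expressed as a free product over the scalars, so none of the theorems you invoke apply. The paper's fix is to compress by $p_\star+q$ (where $q$ is a support projection under $p_\alpha$), show via a new compression lemma (Lemma \ref{lem:compressfree2}) that this corner is an amalgamated free product over $D'\cong\C^2$, and then use an amalgamated simplicity/unique-trace/positive-cone criterion (Lemma \ref{lem:simpleamalgamated}, from the literature on amalgamated free products) whose hypotheses require explicit verification with rotation unitaries in $M_2\otimes C[0,1]$.

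For stable rank $1$ the gap is sharpest: your plan is to check "that the amalgamated case falls within the available generalizations," but no such generalization existed --- producing one is the paper's main technical contribution (Theorem \ref{thm:Amalg.sr1}), proved by redoing the entire Dykema--Haagerup--R{\o}rdam word-by-word: constructing orthonormal bases of alternating words compatible with the amalgam $\C^2$, re-deriving the $\|\cdot\|_2$ versus $\|\cdot\|_\infty$ estimates, and handling the new phenomenon that products of basis words can vanish. Your fallback of passing to a corner does not escape this either, since compression preserves the amalgamation (this is precisely what Lemma \ref{lem:compressfree2} says) rather than reducing it to a scalar free product. Finally, a smaller but real error in your last step: one cannot in general "build a projection of any prescribed trace in $(0,1)$" --- the paper shows $\cA_0$ is \emph{projectionless} whenever all quantum dimensions are integers. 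The correct statement, and the one the paper proves via comparison of projections in $\cA_\infty$ (Theorem \ref{thm:compareproj}, which does use stable rank $1$ and the positive-cone computation as you suggest), is that every class $x\in K_0(\cA_0)$ with $\Tr(x)\in(0,1)$ is realized by a projection in $\cA_0$; that weaker statement suffices for the scale identity, and your cancellation argument is fine once it is phrased this way.
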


Surprisingly, the above theorem gives a strict dichotomy depending on the quantum dimensions of the vertices of $\Gamma$.

\begin{cor*}
If $\Tr(p_\alpha)\in \bbN$ for all $\alpha\in V(\Gamma)$, then $\cA_0$ is projectionless.
Otherwise $\Tr(\Sigma(\cA_0))$ is dense in $[0,1]$.
\end{cor*}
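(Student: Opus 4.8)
The plan is to deduce both cases from the explicit description of the scale $\Sigma(\cA_0)$ in Theorem~\ref{thm:Properties}, combined with a number-theoretic analysis of the image of $K_0(\cA_0)$ under the trace. Write $G := \Tr_*(K_0(\cA_0)) \subseteq \bbR$ for the additive subgroup generated by the values $\{\Tr(p_\alpha) : \alpha \in V(\Gamma)\}$; this is exactly the image of $\Tr_*$ because $K_0(\cA_0) = \bbZ[\{[p_\alpha]\}]$ by Theorem~B and $\Tr_*$ is a homomorphism. Since $\Tr$ is a state, $1 = \Tr(1) \in G$. Applying $\Tr_*$ to the scale formula $\Sigma(\cA_0) = \{x : \tr(x) \in (0,1)\} \cup \{[1],[0]\}$ yields $\Tr(\Sigma(\cA_0)) = (G \cap (0,1)) \cup \{0,1\}$, so the entire statement reduces to deciding whether $G$ is discrete (contained in $\bbZ$) or dense.

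First I would handle the integral case. If $\Tr(p_\alpha) \in \bbN$ for every $\alpha$, then $G \subseteq \bbZ$, so $G \cap (0,1) = \emptyset$ and $\Sigma(\cA_0) = \{[0],[1]\}$. It then remains to upgrade this $K$-theoretic statement to a statement about actual projections: since $\cA_0$ is simple with a unique, hence faithful, tracial state, any nontrivial projection $p$ would satisfy $0 < \Tr(p) < 1$, whereas $\Tr(p) = \Tr_*([p]) \in G \subseteq \bbZ$, a contradiction. (Alternatively, stable rank one furnishes cancellation, so $[p] \in \{[0],[1]\}$ directly forces $p \in \{0,1\}$.) Hence $\cA_0$ is projectionless.

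For the complementary case I would show $G$ is dense. The key input is that each $\Tr(p_\alpha)$ is the quantum dimension $d_\alpha$ of the vertex $\alpha$, normalized so that $d_\star = \Tr(1) = 1$, and that these are positive algebraic integers: the vector $(d_\alpha)$ is the Perron--Frobenius eigenvector of the integer adjacency matrix of $\Gamma$ with eigenvalue $\delta$, each $d_\alpha$ lies in $\bbZ[\delta]$, and $\delta$ is itself an algebraic integer. Consequently a value $\Tr(p_{\alpha_0})$ that fails to be a natural number cannot even be rational, since a rational algebraic integer is an ordinary integer (and positivity would then make it a natural number). Taking such an irrational $\xi = \Tr(p_{\alpha_0}) \in G$ together with $1 \in G$, the subgroup $\bbZ + \bbZ\xi \subseteq G$ is dense in $\bbR$; therefore $G \cap (0,1)$ is dense in $(0,1)$ and $\Tr(\Sigma(\cA_0)) = (G \cap (0,1)) \cup \{0,1\}$ is dense in $[0,1]$.

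I expect the main obstacle to be the middle step: rigorously identifying the trace values $\Tr(p_\alpha)$ with the quantum dimensions and justifying that these are algebraic integers, so that ``not a natural number'' genuinely forces irrationality, and thereby density of $G$. Once this integrality/irrationality dichotomy for quantum dimensions is established, both conclusions follow formally from the scale computation, faithfulness of the unique trace, and the elementary fact that $\bbZ + \bbZ\xi$ is dense in $\bbR$ precisely when $\xi$ is irrational.
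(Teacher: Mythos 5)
Your first half (the projectionless case) is correct and is essentially the paper's argument: integrality of the trace values plus the scale computation and faithfulness of the unique trace force $P(\cA_0)=\{0,1\}$. The genuine gap is in the density case, and it sits exactly where you flagged it: your claim that a quantum dimension which is not a natural number must be irrational is false in general. Your justification requires $\delta$ to be an algebraic integer and each $\Tr(p_\alpha)$ to be an algebraic integer; this holds in finite depth (there the dimension function is a character of a fusion ring that is a finitely generated $\bbZ$-module, so dimensions are eigenvalues of integer matrices), but it fails in infinite depth, where $\delta$ can be an arbitrary real number $\geq 2$. Concretely, take $\cP_\bullet = \TL_\bullet(5/2)$, which the paper itself notes is a (sub)factor planar algebra for every $\delta \in [2,\infty)$, with principal graph $A_\infty$. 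Its quantum dimensions are the quantum integers, i.e.\ integer polynomials in $\delta$: $\Tr(p_1)=5/2$, $\Tr(p_2)=\delta^2-1=21/4$, $\Tr(p_3)=85/8$, and so on. These are rational non-integers, so "not in $\bbN$ implies irrational" is simply wrong, and your density argument collapses for this example: $\bbZ + \bbZ\cdot\tfrac{5}{2} = \tfrac{1}{2}\bbZ$ is not dense in $\bbR$.

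The paper closes precisely this case using the Fact stated immediately before the corollary (attributed to Noah Snyder): if $\mu(V(\Gamma))\subset \bbQ_{>0}$, then either $\mu(V(\Gamma))\subset\bbN$ or the denominators $n_\alpha$ (in lowest terms $\mu(\alpha)=m_\alpha/n_\alpha$) are unbounded. Since $\gcd(m_\alpha,n_\alpha)=1$ gives $\bbZ + \bbZ\tfrac{m_\alpha}{n_\alpha} = \tfrac{1}{n_\alpha}\bbZ$, unbounded denominators again force the group $\tr(K_0(\cA_0))$ to be dense in $\bbR$ (note in the $\TL_\bullet(5/2)$ example the denominators are $2^k$, consistent with the Fact). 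So the correct case analysis is: if some $\mu(\alpha)$ is irrational, your $\bbZ+\bbZ\xi$ argument applies; if all are rational but not all integers, Snyder's Fact supplies density. With that substitution, the rest of your proposal — reducing both statements to the scale computation $\Sigma(\cA_0)=\set{x\in K_0(\cA_0)}{\tr(x)\in(0,1)}\cup\{[0],[1]\}$ and the faithfulness of the unique trace — matches the paper's route.
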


\subsection{Outline}

In Section \ref{sec:Canonical}, we briefly recall the canonical constructions from Part I \cite{CStarFromPAs}.
In Section \ref{sec:GradedAndFiltered}, we discuss the graded (GJS, Voiculescu) and filtered (Walker) conventions for the algebras arising from \cite{MR2732052,MR2645882}.
We prove our results on strong Morita equivalence in Section \ref{sec:Morita}, and we use free probability techniques to prove Theorem \ref{thm:Properties} in Section \ref{sec:properties}.
Finally, we discuss the $C^*$-Watatani tower $(\cA_k)_{k\geq 0}$ in Section \ref{sec:CStarTower}, along with $K$-theoretic obstructions.

\subsection{Future research}

First, we would like to investigate further properties of $\cA_0$. 
For example, it would be interesting to determine its Cuntz semigroup and determine when it has real rank zero.  
The fact that the trace of the scale of $\cA_{0}$ is often dense in $[0, 1]$ gives positive evidence to $\cA_{0}$ having real rank zero in many cases.  

Second, we would like to address the second motivation posed in the introduction, and study spectral triples associated to $\cA_{0}$. 
As an example, the number operator which maps $x\in \cP_n\subset \Gr_0$ to $nx$ is an example of a $\theta$-summable Dirac operator $D$ with compact resolvent. 
Incorporating a factor of $\delta^{2n}$ corresponding to the grading on $\Gr_0$ makes it $p$-summable for all $p>1$.
In particular, we would like to compute the Chern character of $D$, the associated cyclic cocycle, and the induced topology on the state space of $\cA_0$.
It would also be interesting to compute $K^*(\cA_0)$ and its pairing with $K_*(\cA_0)$.

\subsection{Acknowledgements}

The authors would like to thank Ken Dykema, George Elliott, Vaughan Jones, Dima Shlyakhtenko, and Dan Voiculescu for many helpful conversations.
David Penneys was partially supported by the Natural Sciences and Engineering Research Council of Canada.
Both authors were supported by DOD-DARPA grant HR0011-12-1-0009.


\section{The canonical $C^*$-Hilbert bimodule of $\cP_\bullet$}\label{sec:Canonical}

We now rapidly recall the construction from Part I \cite{CStarFromPAs}.
We refer the reader to \cite{1208.5505} for the definition of a factor planar algebra and its principal graph.
A subfactor planar algebra is a shaded factor planar algebra (see \cite{MR2972458} for more details).
As in \cite{1208.5505,CStarFromPAs}, we use the convention that the outer rectangle is omitted, and the $\star$ on each rectangle is on the bottom.

In this article, $\cP_\bullet$ is a (sub)factor planar algebra with $\delta>1$ and $\Gamma$ is its principal graph.
We usually assume $\cP_\bullet$ is unshaded, and we provide remarks along the way for the alterations needed in the shaded case.

\subsection{The $C^*$-Hilbert bimodule $\FP{}$}\label{sec:X}

\begin{defn}[The ground algebra $\cB$]
For $n\geq 0$, let $\cB_{n}=\cB_n(\cP_\bullet)$ be the external direct sum
$$
\cB_n=\bigoplus_{l, r = 0}^{n} \cP_{l, r},
$$
where $\cP_{l, r} = \cP_{l + r}$, and we picture an element of $b\in \cP_{l, r}$ as
$
\begin{tikzpicture}[baseline = -.1 cm]
	\draw (-.6, 0)--(.6, 0);
	\nbox{unshaded}{(0,0)}{.3}{0}{0}{$b$}
	\node at (-.5, .2) {{\scriptsize{$l$}}};
	\node at (.5, .2) {{\scriptsize{$r$}}};
\end{tikzpicture}
$.

We define a multiplication and (non-normalized) trace $\Tr$ on $\cB_{n}$ by
$$
\begin{tikzpicture}[baseline = 0cm]
	\draw (-.8, 0)--(.8, 0);
	\nbox{unshaded}{(0,0)}{.4}{0}{0}{$a$}
	\node at (-.6, .2) {{\scriptsize{$l$}}};
	\node at (.6, .2) {{\scriptsize{$r$}}};
\end{tikzpicture}
\, \cdot \,
\begin{tikzpicture}[baseline = 0cm]
	\draw (-.8, 0)--(.8, 0);
	\nbox{unshaded}{(0,0)}{.4}{0}{0}{$b$}
	\node at (-.6, .2) {{\scriptsize{$l'$}}};
	\node at (.6, .2) {{\scriptsize{$r'$}}};
\end{tikzpicture}\,
= \, \delta_{r, l'} \,
\begin{tikzpicture}[baseline = 0cm]
	\draw (-.8, 0)--(2, 0);
	\nbox{unshaded}{(0,0)}{.4}{0}{0}{$a$}
	\node at (-.6, .2) {{\scriptsize{$l$}}};
	\node at (.6, .2) {{\scriptsize{$r$}}};
    \nbox{unshaded}{(1.2,0)}{.4}{0}{0}{$b$}
	\node at (1.8, .2) {{\scriptsize{$r'$}}};
\end{tikzpicture}
\, \, \text{ and } \, \,
\Tr(b) = \delta_{l, r}\,
\begin{tikzpicture}[baseline = -.2cm]
	\draw (-.4, 0)--(.4, 0) arc(90:-90:.4cm)--(-.4, -.8) arc(270:90:.4cm);
	\nbox{unshaded}{(0,0)}{.4}{0}{0}{$b$}
	\node at (-.6, .2) {{\scriptsize{$l$}}};
	\node at (.6, .2) {{\scriptsize{$r$}}};
\end{tikzpicture}\,.
$$
We equip $\cB_{n}$ with involution $\dagger$ defined as follows.
If $b \in \cP_{l, r}$, then
$$
b^{\dagger} =
\begin{tikzpicture}[baseline = 0cm]
	\draw (-.8, 0)--(.8, 0);
	\nbox{unshaded}{(0,0)}{.4}{0}{0}{$b^{*}$}
	\node at (-.6, .2) {{\scriptsize{$r$}}};
	\node at (.6, .2) {{\scriptsize{$l$}}};
\end{tikzpicture}\,
\in  \cP_{r, l},
$$
where the $*$ is the standard involution on $\cP_\bullet$.
Notice that $\cB_{n}$ is a finite dimensional $C^{*}$-algebra under $\dagger$, so $\cB_n$ has a unique $C^{*}$-norm.  
Since $\cB_{n}$ is canonically a hereditary $C^*$-subalgebra of $\cB_{n+1}$, we define $\cB=\varinjlim \cB_n$.
\end{defn}

\begin{defn}[The bimodules $\cX_{n}$]
Fixing $n \geq 0$, we define 
$$
X_{n} = \bigoplus_{l, r = 1}^{\I} \cP_{l, n, r}
$$
where diagrammatically, an element $x \in \cP_{l, n, r}$ can be seen as 
$
x = \begin{tikzpicture}[baseline = -.1 cm]
	\draw (-.6, 0)--(.6, 0);
	\draw (0, 0)--(0, .6);
	\nbox{unshaded}{(0,0)}{.3}{0}{0}{$x$}
	\node at (-.5, .2) {{\scriptsize{$l$}}};
	\node at (.5, .2) {{\scriptsize{$r$}}};
	\node at (.15, .45) {{\scriptsize{$n$}}};
\end{tikzpicture}
$.
The vector space $X_{n}$ comes equipped with a $\cup_{k\geq 0}\cB_k$-valued inner product which is the sesquilinear extension of
$$
\langle x| y \rangle_{\cB}
= \delta_{l, l'}\,
\begin{tikzpicture}[baseline = 0cm]
	\draw (-.8, 0)--(2, 0);
	\draw (0, .4) arc(180:90:.3cm) -- (.9,.7) arc (90:0:.3cm);
	\node at (.6, .85) {\scriptsize{$n$}};
	\nbox{unshaded}{(0,0)}{.4}{0}{0}{$x^{*}$}
	\node at (-.6, .2) {{\scriptsize{$r$}}};
	\node at (.6, .2) {{\scriptsize{$l$}}};
	\nbox{unshaded}{(1.2,0)}{.4}{0}{0}{$y$}
	\node at (1.8, .2) {{\scriptsize{$r'$}}};
\end{tikzpicture}
$$
for $x \in \cP_{l,n, r}$ and $y \in P_{l', n, r'}$.  
The algebra $\cup_{k\geq 0} \cB_{k}$ naturally acts on the left and right, namely
\begin{align*}
\begin{tikzpicture}[baseline = -.1 cm]
	\draw (-.8, 0)--(.8, 0);
	\nbox{unshaded}{(0,0)}{.4}{0}{0}{$b$}
	\node at (-.6, .2) {{\scriptsize{$l$}}};
	\node at (.6, .2) {{\scriptsize{$r$}}};
\end{tikzpicture}
\,\cdot \,
\begin{tikzpicture}[baseline = -.1 cm]
	\draw (-.8, 0)--(.8, 0);
	\draw (0, 0)--(0, .8);
	\nbox{unshaded}{(0,0)}{.4}{0}{0}{$x$}
	\node at (-.6, .2) {{\scriptsize{$l'$}}};
	\node at (.6, .2) {{\scriptsize{$r'$}}};
\end{tikzpicture}\,
&= \delta_{r, l'} \cdot
\begin{tikzpicture}[baseline = -.1 cm]
	\draw (-.8, 0)--(2, 0);
	\draw (1.2, 0)--(1.2, .8);
	\nbox{unshaded}{(0,0)}{.4}{0}{0}{$b$}
	\node at (-.6, .2) {{\scriptsize{$l$}}};
	\node at (.6, .2) {{\scriptsize{$r$}}};
	\node at (1.8, .2) {{\scriptsize{$r'$}}};
	\nbox{unshaded}{(1.2,0)}{.4}{0}{0}{$x$};
\end{tikzpicture}\, \text{ and }
\\
\begin{tikzpicture}[baseline = -.1 cm]
	\draw (-.8, 0)--(.8, 0);
    \draw (0, 0)--(0, .8);
	\nbox{unshaded}{(0,0)}{.4}{0}{0}{$x$}
	\node at (-.6, .2) {{\scriptsize{$l'$}}};
	\node at (.6, .2) {{\scriptsize{$r'$}}};
\end{tikzpicture}\, \cdot \, \begin{tikzpicture}[baseline = -.1 cm]
	\draw (-.8, 0)--(.8, 0);
	\nbox{unshaded}{(0,0)}{.4}{0}{0}{$b$}
	\node at (-.6, .2) {{\scriptsize{$l$}}};
	\node at (.6, .2) {{\scriptsize{$r$}}};
\end{tikzpicture}\,
&=
\delta_{r', l} \cdot
\begin{tikzpicture}[baseline = -.1 cm]
	\draw (-.8, 0)--(2, 0);
	\draw (0, 0)--(0, .8);
	\nbox{unshaded}{(1.2,0)}{.4}{0}{0}{$b$}
	\node at (-.6, .2) {{\scriptsize{$l'$}}};
	\node at (.6, .2) {{\scriptsize{$l$}}};
	\node at (1.8, .2) {{\scriptsize{$r$}}};
	\nbox{unshaded}{(0,0)}{.4}{0}{0}{$x$};
\end{tikzpicture}\, .
\end{align*}
We complete $X_{n}$ to form $\cX_{n} = \cX_{n}(\cP_{\bullet})$, which is naturally a $\cB-\cB$ Hilbert bimodule.  
There is an involution $\dagger$ on $\cX_{n}$ given by the continuous conjugate-linear extension of 
$$
x^\dagger = \begin{tikzpicture}[baseline = 0cm]
	\draw (-.8, 0)--(.8, 0);
	\draw (0, 0)--(0, .8);
	\nbox{unshaded}{(0,0)}{.4}{0}{0}{$x^{*}$}
        \node at (.2, .6) {{\scriptsize{$n$}}};
	\node at (-.6, .2) {{\scriptsize{$r$}}};
	\node at (.6, .2) {{\scriptsize{$l$}}};
\end{tikzpicture} \in \cP_{r, n, l}\,.
$$
for $x \in \cP_{l, n, r}$.
We write $\cX = \cX_{1}$, and note $\cB=\cX_0$.
\end{defn}

\begin{defn}[The Fock space $\FP{}$]
The \underline{Pimsner-Fock space of $\cP_{\bullet}$} $\FP{}$ is the $\cB-\cB$ Hilbert bimodule
$$
\FP{} = \bigoplus_{n=0}^{\infty} \cX_{n}.
$$
We define creation and annihilation operators on $\FP{}$ as follows.
Let $x \in \cP_{l,1,r}\subset \cX$.
We define the $x$ creation operator $L_+(x)$ on $\FP{}$ by the linear extension of its action on $y\in \cP_{l',n,r'}$:
$$
L_+(x)\cdot
\left(
\begin{tikzpicture}[baseline=.1cm]
	\draw (0,0)--(0,.8);
	\draw (-.8, 0)--(.8, 0);
	\node at (-.6,.2) {\scriptsize{$l'$}};
	\node at (.6,.2) {\scriptsize{$r'$}};
	\node at (.2,.6) {\scriptsize{$n$}};	
	\nbox{unshaded}{(0,0)}{.4}{0}{0}{$y$}
\end{tikzpicture}
\right)
=
\delta_{r,l'}\,
\begin{tikzpicture}[baseline = .1cm]
	\draw (-.8, 0)--(2, 0);
	\draw (0, 0)--(0, .8);
	\draw (1.2, 0)--(1.2, .8);
	\node at (-.6,.2) {\scriptsize{$l$}};
	\node at (.6,.2) {\scriptsize{$r$}};
	\node at (1.8,.2) {\scriptsize{$r'$}};
	\node at (1.4,.6) {\scriptsize{$n$}};
	\nbox{unshaded}{(0,0)}{.4}{0}{0}{$x$}
	\nbox{unshaded}{(1.2,0)}{.4}{0}{0}{$y$}
\end{tikzpicture}\,.
$$
We define the $x$ annihilation operator $L_-(x)$ on $\FP{}$ by the extension of
$$
L_-(x)\cdot
\left(
\begin{tikzpicture}[baseline=-.1cm]
	\draw (0,0)--(0,.8);
	\draw (-.8, 0)--(.8, 0);
	\node at (-.6,.2) {\scriptsize{$l'$}};
	\node at (.6,.2) {\scriptsize{$r'$}};
	\node at (.2,.6) {\scriptsize{$n$}};	
	\nbox{unshaded}{(0,0)}{.4}{0}{0}{$y$}
\end{tikzpicture}
\right)
=
\delta_{r,l'}\,
\begin{tikzpicture}[baseline = -.1cm]
	\draw (-.8, 0)--(2, 0);
	\draw (0.2, .4) arc (180:0:.4cm);
	\draw (1.2, 0)--(1.2, .8);
	\node at (-.6,.2) {\scriptsize{$l$}};
	\node at (.6,.2) {\scriptsize{$r$}};
	\node at (1.8,.2) {\scriptsize{$r'$}};
	\node at (1.6,.6) {\scriptsize{$n-1$}};
	\nbox{unshaded}{(0,0)}{.4}{0}{0}{$x$}
	\nbox{unshaded}{(1.2,0)}{.4}{0}{0}{$y$}
\end{tikzpicture}\,.$$
Note that $(L_{+}(x))^{*} = L_{-}(x^{\dagger})$.
\end{defn}

In \cite{CStarFromPAs}, we showed that the $C^{*}$-algebra generated by $\cB$ and the operators $L_{+}(x)$ is isomorphic to the Pimsner-Toeplitz algebra $\cT(\cX)$ \cite{MR1426840}.  

\begin{defn}
The \underline{free semicircular algebra of $\cP_{\bullet}$} $\SP{}$ is defined to be the $C^*$-algebra generated by $\cB$ together with the elements $L_{+}(x) + L_{-}(x)$ where $x \in \cX$ and $x = x^{\dagger}$.  
\end{defn}

\begin{facts}\label{rem:WalkerMult}
\mbox{}
\be
\item
The work of Germain \cite{Germain} allowed us to use this presentation to compute the $K$-groups of $\SP{}$.
Recalling that $\Gamma$ is the principal graph of $\cP_{\bullet}$, we have
$$
K_0(\SP{})\cong \bbZ[\set{[p_\alpha]}{\alpha\in V(\Gamma)}] \text{ and } K_1(\SP{})=(0).
$$

\item 
In \cite{CStarFromPAs}, we showed that $\SP{}$ can be pictured as generated by elements $x \in \cP_{l, n, r}$ where the action on $\FP{}$ is given by 
$$
x \cdot 
\left(
\begin{tikzpicture}[baseline = .1cm]
	\draw (-.8, 0)--(.8, 0);
	\draw (0, 0)--(0, .8);
    \node at (-.2, .6) {\scriptsize{$m$}};
	\nbox{unshaded}{(0,0)}{.4}{0}{0}{$y$}
\end{tikzpicture}
\right) = \sum_{k=0}^{\min\{n, m\}}\, 
\begin{tikzpicture} [baseline = -.1cm]
	\draw (-.8, 0)--(2, 0);
	\draw (-.2, 0)--(-.2, .8);
	\draw (.2,.4) arc(180:0:.4cm);
	\draw (1.4, 0)--(1.4, .8);
	\node at (.6, .6) {\scriptsize{$k$}};
	\nbox{unshaded}{(0,0)}{.4}{0}{0}{$x$}
	\nbox{unshaded}{(1.2,0)}{.4}{0}{0}{$y$}
\end{tikzpicture}\,,
$$
and this diagram also represents how to multiply $x$ and $y$ in $\SP{}$.

\item 
Let $1_{k}$ be the diagram in $\cB$ which consists of $k$ horizontal through strings.  In \cite{CStarFromPAs}, we showed that $1_{0}\SP{} 1_{0} \cong \cA_{0}$ (see Section \ref{sec:GradedAndFiltered} below).  Similar arguments show that $1_{k}\SP{} 1_{k} \cong \cA_{k}$ for all $k \geq 0$.
  
\ee
\end{facts}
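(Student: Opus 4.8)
The plan is to run the $k=0$ argument of Part I \cite{CStarFromPAs} with the $k$ horizontal through-strings of $1_k$ carried along as inert spectators. First I would pin down the underlying vector space of the corner. Using the pictorial description of $\SP{}$ from Facts \ref{rem:WalkerMult}(2) together with the left action in the definition of $\cX_n$, left multiplication by $1_k\in\cP_{k,k}$ annihilates every $x\in\cP_{l,n,r}$ with $l\neq k$ and acts as the identity when $l=k$, and symmetrically on the right. Since $1_k^\dagger=1_k$ and $1_k^2=1_k$ in $\cB$, the element $1_k$ is a projection and $1_k\SP{}1_k$ is the closed span of the images of the $\cP_{k,n,k}$ for $n\geq 0$. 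This is exactly the underlying space of the graded algebra $\Gr_k$ in the convention recalled in Section \ref{sec:GradedAndFiltered}, with $1_k$ serving as the common unit.

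Next I would match the algebraic structure. For $x\in\cP_{k,n,k}$ and $y\in\cP_{k,m,k}$, the product computed by the sum-over-contractions formula of Facts \ref{rem:WalkerMult}(2) joins the right $k$ strands of $x$ to the left $k$ strands of $y$ and sums over the $\min\{n,m\}+1$ nestings of cups among the top strands, leaving the outer $k$ strands untouched; hence the product stays in the corner and, in the filtered (Walker) convention, agrees on the nose with the multiplication underlying $\cA_k$, so the identity map on planar-algebra elements is a $*$-homomorphism. (In the graded (GJS) convention the two products are instead intertwined by the standard Chebyshev change of basis, exactly as in the $k=0$ case.) The traces then match after normalization: closing up the $k$ through-strands contributes the scalar $\Tr(1_k)=\delta^{k}$, so the restriction of the canonical trace to the corner equals $\delta^{k}\tau_k$, which accounts for the factor $\delta^{-k}$ in the GJS trace formula for $\tau_k$.

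Finally, having a trace-preserving $*$-isomorphism between the dense $*$-subalgebras $\Gr_k$ and the algebraic part of $1_k\SP{}1_k$, I would pass to $C^*$-completions to conclude $1_k\SP{}1_k\cong\cA_k$. The work here is bookkeeping rather than conceptual; the one point needing genuine care is verifying the spectator claim, namely that compression by $1_k$ produces no extra contractions among the outer $k$ strands and does not disturb the generating semicircular elements $L_+(x)+L_-(x)$, so that the entire $k=0$ analysis transports verbatim with the $\delta^{-k}$ normalization as the only new ingredient.
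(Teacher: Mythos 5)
Your proposal is correct, and it rests on the same two ingredients as the paper: the diagrammatic (filtered) picture of multiplication in $\SP{}$, and Walker's change of basis $\Phi$ relating the graded product to the filtered one, together with the matching of traces. The difference is packaging. The paper does the identification once, globally: Lemma \ref{lem:AInftyIso} constructs an explicit unitary $V\colon \FP{}\otimes_{\cB}L^{2}(\cB,\Tr)\to L^{2}(\Gr_{\I},\Tr)$ intertwining the left $\SP{}$- and $\cA_{\I}$-actions, so $\SP{}\cong\cA_{\I}$; since $\Phi$ is the identity on $\cB$, this isomorphism fixes every $1_{k}$, and all corners $1_{k}\SP{}1_{k}\cong 1_{k}\cA_{\I}1_{k}=\cA_{k}$ come out simultaneously (Corollary \ref{cor:ZerothGJS}). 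You instead redo the identification inside each corner separately, carrying the $k$ through-strands as spectators; your bookkeeping (the corner is the closed span of the $\cP_{k,n,k}$, the filtered product preserves it, and $\tau_{\I}|_{\Gr_{k}}=\delta^{k}\tau_{k}$) is accurate, so this works. One correction of emphasis: the step needing genuine care is not the spectator claim but your final ``pass to $C^{*}$-completions.'' A trace-preserving $*$-isomorphism of dense $*$-subalgebras yields an isomorphism of the completions only because both traces are faithful, so that each $C^{*}$-algebra is isometrically represented on its GNS space and the dense isomorphism extends to a unitary intertwining the two GNS representations; that unitary is precisely the paper's $V$ (restricted to the corner), and writing it down is the actual content of Lemma \ref{lem:AInftyIso}. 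What the global route buys is that this analytic step is carried out once, rather than once for each $k$.
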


\subsection{Free graph algebras}\label{sec:FreeGraph}

We begin with some notation which will be useful for the rest of this artilce.

\begin{nota}
Let $\cA$ be a $C^{*}$ algebra equipped with a lower semicontinuous tracial weight $\Tr$ (which might not be finite).  
We write
$$
\cA = \overset{p_{1}}{\underset{\mu_{1}}{\cA_{1}}} \oplus \cdots \oplus \overset{p_{n}}{\underset{\mu_{n}}{\cA_{n}}} \oplus \cD
$$
if $p_{k}$ is a projection in $\cA$ which serves as the identity of the $C^{*}$-algebra $\cA_{k}$, and $\Tr(p_{k}) = \mu_{k}$.  
The algebra $\cD$ may be unital or not.  
\end{nota}

Let $\Lambda$ be an undirected graph with a weighting $\mu: V(\Lambda) \rightarrow\bbR_{>0}$.  
We place a semi-finite trace $\Tr$ on $C_{0}(V(\Lambda))$ by requiring $\Tr(p_{\alpha}) = \mu(\alpha)$ with $p_{\alpha}$ the indicator function at $\alpha$.  
We now construct the free graph algebra of $\Lambda$ \cite{CStarFromPAs}, which for our purposes, will be defined as a reduced amalgamated free product.

\begin{defn}
For $\e \in E(\Lambda)$, we will define algebras $\cS_{\e}$ with trace $\Tr_{\e}$ as follows:
\be

\item If $\e$ is a loop at $\alpha$, then  
$$
(\cS_{\e}, \Tr_{e}) = \overset{p_{\alpha}}{\underset{\mu(\alpha)}{(C[0, 1], d\lambda)}} \oplus C_{0}(V(\Lambda) \setminus \{\alpha\}, \mu)
$$
where $d\lambda$ is integration against Lebesgue measure.

\item If the endpoints of $\e$ are $\alpha$ and $\beta$ with $\alpha \neq \beta$ and $\mu(\alpha) = \mu(\beta)$, then 
$$
(\cS_{\e}, \Tr_{\e}) = \overset{q_{\alpha} + p_{\beta}}{\underset{2\mu(\beta)}{(M_{2}(\C) \otimes C([0, 1]), \Tr_{M_{2}(\C)} \otimes d\lambda}}) \oplus \overset{r_{\alpha}}{\underset{\mu(\alpha) - \mu(\beta)}{\C}} \oplus  C_{0}(V(\Lambda) \setminus \{\alpha, \beta\}, \mu)
$$
where $p_{\alpha} = q_{\alpha} + r_{\alpha}$ and $q_{\beta}$ and $p_{\alpha}$ are embedded along the diagonal of $M_{2}(\C)$

\item If $\e$ has $\alpha$ and $\beta$ as endpoints with $\alpha \neq \beta$ and $\mu(\alpha) = \mu(\beta)$, then set $\cD$ to be the following $C^{*}$-algebra
$$
\cD = \set{f: [0, 1] \rightarrow M_{2}(\C)}{f \text{ is continuous and } f(0) \text{ is diagonal}}
$$
then 
$$
(\cS_{e}, \Tr_{e}) = \overset{p_{\alpha} + p_{\beta}}{\underset{2\mu(\alpha)}{(\cD, \Tr_{M_{2}(\C)} \otimes d\lambda)}} \oplus C_{0}(V(\Lambda) \setminus \{\alpha, \beta\}, \mu).
$$
\ee
Each $\cS_{\e}$ comes with a $\Tr_{\e}$-preserving conditional expectation $E_{\e}$ onto $C_{0}(V(\Lambda))$, and $\Tr_{\e}$ restricted to $C_{0}(V(\Lambda))$ is integration against $\mu$.  We therefore define $\cS(\Lambda, \mu)$ as
$$
(\cS(\Lambda, \mu), E) = \underset{C_{0}(V(\Lambda))}{\Asterisk} (\cS_{\e}, E_{\e})_{\e \in E(\Lambda)}
$$
where the amalgamated free product is reduced.
\end{defn}

\begin{rem}
All the free products in this article are reduced, and we simply use the notation $\Asterisk$ to denote our reduced free products.
\end{rem}

From the free product construction, $\Tr$ extends to a lower semicontinuous semi-finite tracial weight on $\cS(\Lambda, \mu)$ by $\Tr \circ E$.  In \cite{CStarFromPAs}, we computed the $K$-groups of $\cS(\Lambda)$ using \cite{Germain}.
The following result will be used in the proofs in Subsection \ref{sec:Simple}.

\begin{fact}\label{fact:KTheory}
$K_0(\cS(\Lambda, \mu))\cong \bbZ[\set{[p_\alpha]}{\alpha\in V(\Lambda)}]$ and $K_1(\cS(\Lambda, \mu))=(0)$.
\end{fact}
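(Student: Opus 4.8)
The plan is to realize $\cS(\Lambda,\mu)$ as an iterated reduced amalgamated free product over $D:=C_0(V(\Lambda))$ and to feed this into Germain's Mayer--Vietoris sequence for the K-theory of reduced amalgamated free products \cite{Germain}; this is exactly the tool the construction was designed for. Since each $\Tr_\e$ is a faithful semifinite trace and each $E_\e$ is $\Tr_\e$-preserving, the conditional expectations have faithful GNS representations, so Germain's hypotheses hold. For a two-factor reduced amalgamated free product $(A,E)=(A_1,E_1)\Asterisk_D(A_2,E_2)$ with inclusions $\iota_i\colon D\hookrightarrow A_i$ and $j_i\colon A_i\hookrightarrow A$, his theorem supplies a natural six-term exact sequence
\[
\cdots \longrightarrow K_i(D)\xrightarrow{\ \alpha_i\ } K_i(A_1)\oplus K_i(A_2)\xrightarrow{\ \beta_i\ } K_i(A)\xrightarrow{\ \partial\ } K_{i-1}(D)\longrightarrow\cdots,
\]
where $\alpha_i(x)=((\iota_1)_*x,\,-(\iota_2)_*x)$ and $\beta_i(a,b)=(j_1)_*a+(j_2)_*b$. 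Here $K_0(D)=\bbZ[\set{[p_\alpha]}{\alpha\in V(\Lambda)}]$ and $K_1(D)=0$, so the whole problem reduces to understanding the inclusions $D\hookrightarrow\cS_\e$ on K-theory.

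The second step is the only genuine computation: for each edge $\e$ I would show that $(\iota_\e)_*\colon K_0(D)\to K_0(\cS_\e)$ is an isomorphism and that $K_1(\cS_\e)=0$. Each $\cS_\e$ splits off a copy of $C_0(V(\Lambda)\setminus\{\dots\})$ on which $\iota_\e$ is the identity, so it suffices to treat the ``interesting'' summand. In the loop case the summand $C[0,1]$ is homotopy equivalent to $\C$ and $[p_\alpha]$ maps to its unit, giving a $K_0$-iso. In the unequal-weight case the summand $M_2(\C)\otimes C([0,1])\oplus\C$ has $K_0=\bbZ^2$, and since the diagonal rank-one projections $q_\alpha,q_\beta$ are $K_0$-equivalent, the images $[p_\alpha]=[q_\alpha]+[r_\alpha]$ and $[p_\beta]=[q_\beta]$ form a basis. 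In the equal-weight case I would compute $K_*(\cD)$ from the evaluation $\mathrm{ev}_0\colon\cD\twoheadrightarrow\C^2$ whose kernel $C_0((0,1])\otimes M_2(\C)$ is a contractible cone, so $K_0(\cD)=\bbZ^2$ with basis the constant diagonal projections $=[p_\alpha],[p_\beta]$, and $K_1(\cD)=0$. In every case $(\iota_\e)_*$ is an isomorphism and $K_1(\cS_\e)=0$.

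Finally I would assemble these facts through the sequence. For two factors, $K_1(\cS_{\e_1})=K_1(\cS_{\e_2})=K_1(D)=0$ collapses the six-term sequence to a short exact sequence $0\to K_0(D)\xrightarrow{\alpha_0}K_0(D)\oplus K_0(D)\to K_0(A)\to 0$, in which $\alpha_0(x)=(x,-x)$ (using the two $K_0$-isos) is injective with cokernel $\cong K_0(D)$ via $(a,b)\mapsto a+b$; hence $K_0(A)\cong K_0(D)$ and $K_1(A)=0$, and one checks the composite $D\hookrightarrow A$ is again a $K_0$-iso. Inducting on a finite edge set keeps this hypothesis in force at every stage, and for infinite $E(\Lambda)$ the reduced free product is the inductive limit of the free products over finite edge sets, so continuity of $K_*$ delivers $K_0(\cS(\Lambda,\mu))\cong K_0(D)=\bbZ[\set{[p_\alpha]}{\alpha\in V(\Lambda)}]$ and $K_1=0$. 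The main obstacle is bookkeeping rather than depth: pinning down Germain's maps with the correct signs so that $\alpha_0$ is manifestly injective, and verifying that the amalgam inclusion $D\hookrightarrow A$ stays a $K_0$-isomorphism at each inductive step so that the induction and the direct-limit argument propagate uniformly.
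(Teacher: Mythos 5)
Your proposal is correct and is essentially the paper's own argument: the paper asserts Fact~\ref{fact:KTheory} by deferring to Part I \cite{CStarFromPAs}, where the $K$-groups are computed precisely as you outline, namely by feeding the edge-by-edge reduced amalgamated free product decomposition of $\cS(\Lambda,\mu)$ over $C_0(V(\Lambda))$ into Germain's Mayer--Vietoris sequence \cite{Germain}, checking that each inclusion $C_0(V(\Lambda))\hookrightarrow \cS_{\e}$ induces a $K_0$-isomorphism with $K_1(\cS_{\e})=0$ (your three edge-type computations are the right ones), and then inducting over edges and passing to inductive limits. The one small point to add is that the standard form of Germain's theorem assumes a unital, finite-dimensional amalgam, so when $V(\Lambda)$ is infinite you should first split off the common direct summand $C_0$ of the vertices not touched by the finite edge set under consideration (equivalently, exhaust $\Lambda$ by finite subgraphs in both vertices and edges), a routine adjustment to your limiting step.
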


\subsection{Compressing the system}\label{sec:compression}

For each $\alpha \in V(\Gamma)$, we choose a representative $p_{\alpha} \in \cP_{2\depth(\alpha)}\subset 1_{\depth(\alpha)} \cB 1_{\depth(\alpha)}$.  Let $\cC$ be the $C^{*}$ algebra generated by the $p_{\alpha}$ and note that $\cC\cong C_{0}(V(\Gamma))$.  The \underline{free graph algebra of $\cP_{\bullet}$} $\cS(\Gamma)$ is the $C^*$-algebra generated by $\cC \SP{} \cC$.

It was shown in \cite{CStarFromPAs}, based on ideas in \cite{MR2807103}, that $\cS(\Gamma) \cong \cS(\Gamma, \mu)$ as in Subsection \ref{sec:FreeGraph} for $\mu$ the quantum dimension weighting induced from the trace on $\cP_{\bullet}$, which satisfies the Frobenius-Perron condition for the modulus $\delta$.
It was also shown in \cite{CStarFromPAs} that $\SP{} = \cS(\Gamma) \otimes \cK$ for $\cK$ the algebra of compact operators on a separable, infinite-dimensional Hilbert space.

Given the principal graph $\Gamma$ of $\cP_{\bullet}$, we have an auxiliary directed graph $\vec{\Gamma}$ which is instrumental in our computations. 

\begin{defn}\label{defn:Directed}
Suppose $\Lambda$ is an undirected graph.
We define a directed graph $\vec{\Lambda}=(V(\vec{\Lambda}),E(\vec{\Lambda}),s,t)$ as follows:
\be
\item
$V(\Lambda) = V(\vec{\Lambda})$.
\item
For each $\epsilon\in E(\Lambda)$ which is a loop, i.e., $\epsilon$ only connects to one vertex $\alpha$, we have a single edge $\epsilon\in E(\vec{\Lambda})$ such that $s(\epsilon)=t(\epsilon)=\alpha$.
\item
For each $\epsilon\in E(\Lambda)$ which is not a loop, i.e., $\epsilon$ connects to two distinct vertices $\alpha,\beta$, we have two edges $\epsilon',\epsilon''\in E(\vec{\Lambda})$ such that $s(\epsilon')=t(\epsilon'')=\alpha$ and $t(\epsilon')=s(\epsilon'')=\beta$.
\ee
There is an involution $\op$ on $E(\vec{\Lambda})$ given by $\epsilon^{\op}=\epsilon$ if $\epsilon$ is a loop, and $(\epsilon')^{\op}=\epsilon''$ if $\epsilon$ is not a loop.
\end{defn}

\begin{rem}
One can also define the free graph algebra $\cS(\Lambda, \mu)$ from Subsection \ref{sec:FreeGraph} as a subalgebra of $\cT_{\vec{\Lambda}}$ \cite{MR1722197}, a Toeplitz extension of $\cO_{\vec{\Lambda}}$.
See \cite{CStarFromPAs} for more details.
\end{rem}

The point of the above definition is the highly useful \underline{edge elements}, defined as follows.

\begin{defn}[{\cite{CStarFromPAs}}]
Let $E(\alpha \rightarrow \beta)$ denote the edges in $\vec{\Gamma}$ with source $\alpha$ and target $\beta$.  
Let $\e \in E(\Gamma)$ and $\e'$ and $\e''$ as in Definition \ref{defn:Directed}.
\be
\item 
If $E(\alpha \rightarrow \alpha)$ is of size $n$, then $p_{\alpha}\cX p_{\alpha}$ is $n$-dimensional and has an orthonormal basis $\{g_{\e}\}$ indexed by $\e \in E(\alpha \rightarrow \alpha)$.  
We can and will choose the $g_{\e}$ to be self adjoint.

\item 
If $E(\alpha \rightarrow \beta)$ is of size $n$, then $p_{\alpha}\cX p_{\beta}$ is $n$-dimensional and has an orthonormal basis indexed by $g_{\e'}$ such that $\e \in E(\Gamma)$ with $\alpha$ and $\beta$ as endpoints and $s(\e') = \alpha$.  
We also have corresponding elements $g_{\e''} \in p_{\beta}\cX p_{\alpha}$.  
We choose the indexing so that $g_{\e'}^{\dagger} = g_{\e''}$.
\ee
\end{defn}

We have the following facts about the edge elements $g_{\e}$ and $g_{\e'}$, which help one explicitly realize the isomorphism $\cS(\Gamma)\cong \cS(\Gamma, \mu)$ from Subsection \ref{sec:FreeGraph}.

\begin{facts}[{\cite{MR2807103, CStarFromPAs}}]\label{facts:edge}
\mbox{}
\be

\item $\cS(\Gamma)$ is generated as a $C^{*}$-algebra by $\cC$ and the elements $g_{\e}$ and $g_{\e'}$ above where the multiplication is as in Facts \ref{rem:WalkerMult}.

\item There is a conditional expectation, $E: \cS(\Gamma) \rightarrow \cC$ which is defined on polynomials in $\cC$ and the $g_{e}$, $g_{\e'}$, which sends a polynomial to its term that lies in $\cC$.

\item If $\e$ is a loop, let $g_{\e}$ be as above, and if $\e$ is not a loop, let $h_{\e} = g_{\e'} + g_{\e''}$.  Then the elements $g_{\e},h_{\e}$ are all free with amalgamation with respect to $E$, and furthermore, $\Tr \circ E$ is a lower-semicontinuous tracial weight on $\cS(\Gamma)$ where $\Tr$ is as in Subsection \ref{sec:X}.

\item If $\e$ is a loop at $\alpha$ then the distribution of $g_{\e}^{2}$ in $p_{\alpha}\cS(\Gamma)p_{\alpha}$ with respect to $\Tr \circ E$ is Free-Poisson with no atoms whose support contains 0.

\item Let $\e \in E(\Gamma)$ with $\alpha$ and $\beta$ as endpoints, $\mu(\alpha) > \mu(\beta)$, and $s(\e') = t(\e'') = \alpha$.  The distribution of $g_{\e'}^{*}g_{\e'} = g_{\e''}g_{\e'}$ in $p_{\beta}\cS(\Gamma)p_{\beta}$ with respect to $\Tr \circ E$ is a Free-Poisson  with no atoms and is supported away from the origin.  In particular, $g_{\e'}^{*}g_{\e'}$ is invertible in $p_{\beta}\cS(\Gamma)p_{\beta}$. This means that the distribution of $g_{\e''}^{*}g_{\e''}$ in $p_{\alpha}\cS(\Gamma)p_{\alpha}$ has an atom of size $\mu(\alpha) - \mu(\beta)$ at 0 and the same absolutely continuous portion as the distribution of $g_{\e'}^{*}g_{\e'}$.

\item Let $\e \in E(\Gamma)$ with $\alpha$ and $\beta$ as endpoints, $\alpha \neq \beta$, $\mu(\alpha) = \mu(\beta)$, and $s(\e') = t(\e'') = \alpha$.  The distribution of $g_{\e'}^{*}g_{\e'}$ in $p_{\beta}\cS(\Gamma)p_{\beta}$ with respect to $\Tr \circ E$ is a Free-Poisson with no atoms whose support contains 0.  The same is true of the distribution of $g_{\e''}^{*}g_{\e''}$ in $p_{\alpha}\cS(\Gamma)p_{\alpha}$.
\ee
\end{facts}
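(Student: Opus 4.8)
The plan is to reduce every assertion to the identification $\cS(\Gamma)\cong\cS(\Gamma,\mu)$ as a reduced amalgamated free product over $\cC\cong C_0(V(\Gamma))$ established in \cite{CStarFromPAs}, together with the diagrammatic multiplication recalled in Facts \ref{rem:WalkerMult}. Under this identification each edge $\e\in E(\Gamma)$ contributes a single free component $\cS_\e$, whose off-diagonal generators are the edge elements: a self-adjoint loop generator $g_\e$ when $\e$ is a loop, and the pair $g_{\e'},g_{\e''}=g_{\e'}^\dagger$ spanning the corners $p_\alpha\cX p_\beta$ and $p_\beta\cX p_\alpha$ when $\e$ joins distinct vertices $\alpha,\beta$. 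First I would dispatch (1) and (2). Since $\SP{}$ is generated by $\cB$ together with the self-adjoint elements of $\cX$, the compression $\cS(\Gamma)=\cC\SP{}\cC$ is generated by $\cC$ and the compressions $p_\alpha\cX p_\beta$; as the $g_\e$ (respectively the $g_{\e'}$) form orthonormal bases of these finite-dimensional corners, (1) is immediate. For (2), each component of the free product carries a canonical $\Tr_\e$-preserving conditional expectation $E_\e$ onto $C_0(V(\Gamma))$, and by the universal property of the reduced amalgamated free product these assemble into a single expectation $E$ onto $\cC$ which, on a reduced word, retains exactly the scalar term; in the picture of Facts \ref{rem:WalkerMult} this is the operation of keeping only the summand that lands back in $\cB$.

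Next I would prove (3). Freeness with amalgamation is built into the definition: the $\cS_\e$ are by construction free over $C_0(V(\Gamma))$ with respect to $E$, and for each $\e$ the element $g_\e$ (loop) or $h_\e=g_{\e'}+g_{\e''}$ (non-loop) lies in its own component $\cS_\e$, so the family $\{g_\e,h_\e\}$ is free with amalgamation. That $\Tr\circ E$ is a trace follows because each $\Tr_\e$ is tracial on $\cS_\e$ and restricts to integration against $\mu$ on the amalgam $C_0(V(\Gamma))$, so traciality propagates through the free product on every reduced word; lower semicontinuity and semifiniteness are inherited from the weight $\Tr$ of Subsection \ref{sec:X}.

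The main work, and the main obstacle, is the distribution computation in (4)--(6), carried out component by component inside each $\cS_\e$. For a loop at $\alpha$, $g_\e$ is a compressed self-adjoint element of $\cX$ and is semicircular in $p_\alpha\cS_\e p_\alpha\cong C[0,1]$; since the square of a semicircular element is a Marchenko--Pastur law, $g_\e^2$ has a Free-Poisson distribution with no atom and support touching $0$, giving (4). For a non-loop joining $\alpha\neq\beta$ with $\mu(\alpha)>\mu(\beta)$, I would model $g_{\e'}$ as a rectangular circular element between corners of trace $\mu(\alpha)$ and $\mu(\beta)$: then $g_{\e'}^*g_{\e'}=g_{\e''}g_{\e'}$ in the smaller corner $p_\beta\cS_\e p_\beta$ is Free-Poisson supported away from $0$, hence invertible, while $g_{\e''}^*g_{\e''}=g_{\e'}g_{\e''}$ in the larger corner $p_\alpha\cS_\e p_\alpha$ acquires a kernel of $\Tr$-mass exactly $\mu(\alpha)-\mu(\beta)$, producing the atom at $0$ of that size together with the same absolutely continuous part, which is (5); the equal-weight case (6) is the same rectangular computation with $\mu(\alpha)=\mu(\beta)$, where neither corner has a trace defect, so both $g_{\e'}^*g_{\e'}$ and $g_{\e''}^*g_{\e''}$ are atom-free Free-Poisson laws whose support contains $0$. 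The delicate points are the verification that the compressed generators are genuinely (rectangular) semicircular or circular with the correct variances, and the exact bookkeeping of the weights $\mu(\alpha),\mu(\beta)$ through the compression so that the atom has mass precisely $\mu(\alpha)-\mu(\beta)$ in the semifinite normalization; these are exactly the free-probability computations of \cite{MR2807103}, to which I would appeal for this input.
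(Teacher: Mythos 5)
Your proposal ultimately takes the same route as the paper, which states these Facts without proof and simply cites \cite{MR2807103,CStarFromPAs}: you defer the identification $\cS(\Gamma)\cong\cS(\Gamma,\mu)$ to Part I and the distribution computations to GJS, and the free-probability bookkeeping you supply for (4)--(6) is correct (the square of a semicircular element is free Poisson of rate $1$; the rectangular computation gives a Marchenko--Pastur law of rate $\mu(\alpha)/\mu(\beta)$ in the corner $p_{\beta}\cS(\Gamma)p_{\beta}$, whence the support away from $0$ when $\mu(\alpha)>\mu(\beta)$, and in the opposite corner an atom at $0$ of non-normalized mass $\mu(\alpha)-\mu(\beta)$ together with the same absolutely continuous part).

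However, your justification of (1) contains a genuine gap. From the fact that $\SP{}$ is generated by $\cB$ and the elements $L_{+}(x)+L_{-}(x)$, $x\in\cX$ self-adjoint, it does not follow that $\cC\SP{}\cC$ is generated by $\cC$ and the corners $p_{\alpha}\cX p_{\beta}$: compressing a generating set controls only the ends of words, not their middles. A typical generator of $\cS(\Gamma)$ has the form $p_{\alpha}\,b_{0}\,y_{1}\,b_{1}\cdots y_{n}\,b_{n}\,p_{\beta}$ with $y_{i}=L_{+}(x_{i})+L_{-}(x_{i})$ and the internal boxes $b_{i}\in\cB$ arbitrary, and one cannot insert a resolution of the identity from $\cC$ between the factors, because $\sum_{\gamma}p_{\gamma}$ is not an approximate unit for $\SP{}$: the algebra $\cC$ contains a single representative of each vertex, while $\cB$ contains infinitely many equivalent copies of each minimal projection. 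Showing that such words nevertheless lie in $C^{*}(\cC\cup\cC\cX\cC)$ is exactly the nontrivial diagrammatic content of \cite{MR2807103,CStarFromPAs}: one expands the internal boxes into matrix units indexed by pairs of paths on $\Gamma$ and identifies these with words in the edge elements. Since items (1)--(3) are precisely the concrete, generator-matching form of the isomorphism $\cS(\Gamma)\cong\cS(\Gamma,\mu)$, you cannot simultaneously treat that isomorphism as an input and declare (1) ``immediate''; either cite it in the strong form that matches the canonical free-product generators with the $g_{\e}$ and $g_{\e'}$ (which is what the paper's citation implicitly does), or supply the path-decomposition argument.
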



\section{The graded and filtered conventions}\label{sec:GradedAndFiltered}

In this section, we recall the Guionnet-Jones-Shlyakhtenko construction \cite{MR2732052} and the orthogonal version \cite{MR2645882}.
The first constructs a graded algebra, and the second constructs a filtered algebra.
In Part I \cite{CStarFromPAs}, the free semicircluar algebras developed use the filtered convention described below.   After this section, we will frequently identify the $C^{*}$-algebras arising from these two conventions since they are isomorphic.  We will only explicitly state which convention we are using when it is needed. 

\subsection{The Guionnet-Jones-Shlyakhtenko construction}\label{sec:GJSConstruction}

\begin{defn}
Given a factor planar algebra $\cP_{\bullet}$, we form the vector space 
$$
\Gr_{k} = \bigoplus_{n=0}^{\infty} \cP_{2k + n}.
$$
Recall from \cite{MR2732052,MR2645882,MR2807103} that $\Gr_{k}$ has the structure of a graded algebra with graded multiplication $\wedge_k$ and normalized Voiculescu trace $\tau_k$ acts on itself by left and right multiplication which is bounded with respect to $\|\cdot \|_2$.
$$
\tau_k(x \wedge_k y)
=
\tau_k\left(
\begin{tikzpicture}[baseline = .1cm]
	\draw (-.8, 0)--(2, 0);
	\draw (0, 0)--(0, .8);
	\draw (1.2, 0)--(1.2, .8);
	\nbox{unshaded}{(0,0)}{.4}{0}{0}{$x$}
	\nbox{unshaded}{(1.2,0)}{.4}{0}{0}{$y$}
	\node at (-.6, .2) {{\scriptsize{$k$}}};
	\node at (.6, .2) {{\scriptsize{$k$}}};
	\node at (1.8, .2) {{\scriptsize{$k$}}};
\end{tikzpicture}
\right)
=
\delta^{-k}
\begin{tikzpicture}[baseline=.3cm]
	\draw (.4,0)--(.4,.6);
	\draw (-.4,0)--(-.4,.6);
	\draw (-.7,0) arc (90:270:.3cm) -- (.7,-.6) arc (-90:90:.3cm) -- (-.7,0);
	\nbox{unshaded}{(-.4,0)}{.3}{0}{0}{$x$}
	\nbox{unshaded}{(.4,0)}{.3}{0}{0}{$y$}
	\nbox{unshaded}{(0,.8)}{.3}{.4}{.4}{$\Sigma \, \TL$}
	\node at (1.1, -.4) {{\scriptsize{$k$}}};		
\end{tikzpicture}
$$
We use the following notation:
\begin{itemize}
\item
$\cA_k$ is the unital $C^*$-algebra generated by the left action of $\Gr_k$ on $L^2(\Gr_k,\tau_k)$,
\item
$\cM_k=\cA_k''=\Gr_k''$ on $L^2(\Gr_k,\tau_k)$.
The algebras $(\cM_k)_{k\geq 0}$ form a Jones tower where
\begin{enumerate}[(i)]
\item
the inclusion $\cM_k \hookrightarrow \cM_{k+1}$ is the extension of the inclusion $\Gr_k\hookrightarrow \Gr_{k+1}$ given by
$$
\begin{tikzpicture}[baseline=-.1cm]
	\draw (-.8, 0)--(.8, 0);
	\draw (0, 0)--(0, .8);
	\nbox{unshaded}{(0,0)}{.4}{0}{0}{$x$}
	\node at (.2, .6) {{\scriptsize{$n$}}};
	\node at (-.6, .2) {{\scriptsize{$k$}}};
	\node at (.6, .2) {{\scriptsize{$k$}}};
\end{tikzpicture}
\longmapsto
\begin{tikzpicture}[baseline=-.1cm]
	\draw (-.8, 0)--(.8, 0);
	\draw (-.8, -.5)--(.8, -.5);
	\draw (0, 0)--(0, .8);
	\nbox{unshaded}{(0,0)}{.4}{0}{0}{$x$}
	\node at (.2, .6) {{\scriptsize{$n$}}};
	\node at (-.6, .2) {{\scriptsize{$k$}}};
	\node at (.6, .2) {{\scriptsize{$k$}}};
\end{tikzpicture}\,,
$$
\item
the Jones projections are given by
$e_k= \delta^{-1} \,
\begin{tikzpicture} [baseline = -.1cm]
	\nbox{}{(0,0)}{.4}{0}{0}{}
	\draw (-.4, .1)--(.4, .1);
	\draw (-.4, 0) arc(90:-90:.15cm);
	\draw (.4, 0) arc(90:270:.15cm);
	\node at (0, .25) {\scriptsize{$k-1$}};
\end{tikzpicture}
\in\Gr_{k+1}$, and
\item
the trace-preserving conditional expectation $\cM_{k+1}\to \cM_k$ is the extension of the conditional expectation $\Gr_{k+1}\to \Gr_{k}$ given by
$$
\begin{tikzpicture}[baseline=-.1cm]
	\draw (-.8, 0)--(.8, 0);
	\draw (0, 0)--(0, .8);
	\nbox{unshaded}{(0,0)}{.4}{0}{0}{$x$}
	\node at (.2, .6) {{\scriptsize{$n$}}};
	\node at (-.6, .2) {{\scriptsize{$k$}}};
	\node at (.6, .2) {{\scriptsize{$k$}}};
\end{tikzpicture}
\mapsto
\delta^{-1}
\begin{tikzpicture}[baseline=-.1cm]
	\draw (-.8, 0)--(.8, 0);
	\draw (0, 0)--(0, .8);
	\draw (-.4,-.2) arc (90:270:.2cm) -- (.4,-.6) arc (-90:90:.2cm);
	\nbox{unshaded}{(0,0)}{.4}{0}{0}{$x$}
	\node at (.2, .6) {{\scriptsize{$n$}}};
	\node at (-.8, .2) {{\scriptsize{$k-1$}}};
	\node at (.8, .2) {{\scriptsize{$k-1$}}};
\end{tikzpicture}.
$$
\end{enumerate}
\item
$\cA_\I$ is the $C^*$-algebra generated by
$$
\Gr_{\I} = \bigoplus_{\ell, n, r = 0}^{\infty}\cP_{\ell, n, r}
$$
where we picture $x \in \cP_{l, n, r}$ as
$
\begin{tikzpicture}[baseline=0cm]
	\draw (-.6, 0)--(.6, 0);
	\draw (0, 0)--(0, .6);
	\nbox{unshaded}{(0,0)}{.3}{0}{0}{$x$}
	\node at (.2, .5) {{\scriptsize{$n$}}};
	\node at (-.5, .2) {{\scriptsize{$l$}}};
	\node at (.5, .2) {{\scriptsize{$r$}}};
\end{tikzpicture}\,
$.
The multiplication on $\cA_{\I}$ is given by
$$
\begin{tikzpicture}[baseline = -.1cm]
	\draw (-.8, 0)--(.8, 0);
	\draw (0, 0)--(0, .8);
	\nbox{unshaded}{(0,0)}{.4}{0}{0}{$x$}
	\node at (.2, .6) {{\scriptsize{$n$}}};
	\node at (-.6, .2) {{\scriptsize{$l$}}};
	\node at (.6, .2) {{\scriptsize{$r$}}};
\end{tikzpicture}
\,\wedge_\I\,
\begin{tikzpicture}[baseline = -.1cm]
	\draw (-.8, 0)--(.8, 0);
	\draw (0, 0)--(0, .8);
	\nbox{unshaded}{(0,0)}{.4}{0}{0}{$y$}
	\node at (.2, .6) {{\scriptsize{$m$}}};
	\node at (-.6, .2) {{\scriptsize{$l'$}}};
	\node at (.6, .2) {{\scriptsize{$r'$}}};
\end{tikzpicture}
\,=\delta_{r,l'}\,
\begin{tikzpicture}[baseline = -.1cm]
	\draw (-.8, 0)--(2, 0);
	\draw (0, 0)--(0, .8);
	\draw (1.2, 0)--(1.2, .8);
	\nbox{unshaded}{(0,0)}{.4}{0}{0}{$x$}
	\node at (.2, .6) {{\scriptsize{$n$}}};
	\nbox{unshaded}{(1.2,0)}{.4}{0}{0}{$y$}
	\node at (1.4, .6) {{\scriptsize{$m$}}};
	\node at (-.6, .2) {{\scriptsize{$l$}}};
	\node at (.6, .2) {{\scriptsize{$r$}}};
	\node at (1.8, .2) {{\scriptsize{$r'$}}};
\end{tikzpicture},
$$
and it carries a non-normalized semi-finite trace
$$
\tau_\I
\left(
\begin{tikzpicture}[baseline = .1cm]
	\draw (-.8, 0)--(.8, 0);
	\draw (0, 0)--(0, .8);
	\nbox{unshaded}{(0,0)}{.4}{0}{0}{$x$}
	\node at (.2, .6) {{\scriptsize{$n$}}};
	\node at (-.6, .2) {{\scriptsize{$l$}}};
	\node at (.6, .2) {{\scriptsize{$r$}}};
\end{tikzpicture}
\right)=
\delta_{l,r}\,
\begin{tikzpicture}[baseline=.5cm]
	\draw (0,0)--(0,.8);
	\nbox{unshaded}{(0,0)}{.4}{0}{0}{$x$}
	\node at (.2, .6) {{\scriptsize{$n$}}};
	\nbox{unshaded}{(0,1.2)}{.4}{.3}{.3}{$\Sigma \, \TL$}
	\draw (-.4,0) arc (90:270:.3cm) -- (.4,-.6) arc (-90:90:.3cm);
	\node at (.8, -.4) {{\scriptsize{$r$}}};		
\end{tikzpicture}\,.
$$
Note that $\cA_k=1_k \cA_\I 1_k$ where $1_k\in \cP_{2k}$ is the diagram with $k$ horizontal strands.
\item
$\cM_\I=\cA_\I''=\Gr_\I''$ on $L^2(\Gr_\I,\tau_\I)$.
\end{itemize}
\end{defn}

\begin{rem}
Note that the restriction of $\tau_\I$ to $\cP_{2k}\subset \Gr_k$ is the usual non-normalized trace on $\cP_{2k}$. We denote this restriction by $\Tr$.
\end{rem}

\subsection{The filtered convention}

$\Gr_{\I}$ also comes equipped with a filtered algebra structure.  The multiplication $\cdot$ is given by:
$$ x \cdot y = \delta_{r, l'} \cdot
\sum_{j = 0}^{\min\{n, m\}} \begin{tikzpicture}[baseline=.2cm]
	\draw(-1.3, 0)--(1.3, 0);
	\draw (-.7,0)--(-.7,1);
	\draw (.7,0)--(.7,1);
	\draw (-.3,.4) arc (180:0:.3cm);
	\node at (-1.1, .2) {\scriptsize{$l$}};
	\node at (1.1, .2) {\scriptsize{$r'$}};
	\node at (-1.1,1.2) {\scriptsize{$n-j$}};
	\node at (-.3,.7) {\scriptsize{$j$}};
	\node at (1.2,1.2) {\scriptsize{$m-j$}};
	\nbox{unshaded}{(-.5,0)}{.4}{0}{0}{$x$}
	\nbox{unshaded}{(.5,0)}{.4}{0}{0}{$y$}
\end{tikzpicture}
$$
for $x \in \cP_{l, n, r}$ and $y \in \cP_{l', m, r'}$.  There is a trace $\Tr$ on $\Gr_{\infty}$ given by
$$
\Tr(x) = \delta_{n, 0} \cdot \delta_{l, r}\,
\begin{tikzpicture}[baseline = -.2cm]
	\draw (-.4, 0)--(.4, 0) arc(90:-90:.4cm)--(-.4, -.8) arc(270:90:.4cm);
	\nbox{unshaded}{(0,0)}{.4}{0}{0}{$x$}
	\node at (-.6, .2) {{\scriptsize{$l$}}};
	\node at (.6, .2) {{\scriptsize{$r$}}};
\end{tikzpicture}
$$   
for $x \in \cP_{l, n, r}$.  Consider the map $\Phi: \Gr_{\I} \rightarrow \Gr_{\I}$ given by:
$$
\Phi(x) = \sum_{T \in \Epi(\TL_{\bullet})} \begin{tikzpicture}[baseline = .4cm]
	\draw (-.8, 0)--(.8,0);
	\draw (0, 0)--(0,.8);
	\nbox{unshaded}{(0,0)}{.4}{0}{0}{$x$}
	\nbox{unshaded}{(0,1)}{.4}{0}{0}{$T$}
\end{tikzpicture}
$$ 
where $\Epi(\TL_{\bullet})$ is the set of Temperley-Lieb diagrams with marked points on the top and bottom so that every string which has an endpoint on the top has its other endpoint on the bottom.  Note that $\Phi$ preserves elements in $\cP_{l, 0, r}$ and $\cP_{l, 1, r}$. 
From the arguments in \cite[Section 5]{MR2645882}, $\Phi$ is a bijection with the property that
$$
\Phi(x \wedge_{\I} y) = \Phi(x)\cdot \Phi(y) \text{ and } \tau_{\infty}(x) = \Tr(\Phi(x)).
$$
This shows that $\cA_{\I}$ is isomorphic to the $C^{*}$-algebra generated by $\Gr_{\I}$ acting on $L^{2}(\Gr_{\I}, \Tr)$ via the multiplication $\cdot$.  

We now want to identify $\cA_\infty$ with $\SP{}$. 
To do so, we need a few definitions.
First, let $L^{2}(\cB, \Tr)$ be the closure of $\set{x \in \cB}{\Tr(x^{*}x) < \infty}$ with respect to the norm $\|\cdot\|_{2}$ where $\|x\|_{2}^{2} = \Tr(x^{*}x)$.
Notice that $\cB$ has an obvious faithful left action on $L^{2}(\cB, \Tr)$.  

Next, let $\cH_\infty=\FP{} \otimes_{\cB} L^{2}(\cB, \Tr)$, and note that $\cH_\infty$ has a left $\SP{}$-action.
Since $\cB$ acts on the right of $\FP{}$ faithfully, the induced representation of $\cL(\FP{}))$ on $\cH_{\I}$ is faithful, so the left $\SP{}$-action is faithful.  

Finally, recall from \cite[Section 5.5]{CStarFromPAs} that there is a conditional expectation $E_\infty: \SP{} \rightarrow \cB$ given by the extension of $E_\infty(x) = \delta_{n, 0}x$ for $x \in \cP_{l, n, r}\subset \SP{}$.
Moreover, the GNS Hilbert space $L^{2}(\SP{}, \Tr\circ E_\infty)$ can be naturally identified with $L^2(\Gr_\infty,\Tr)$. 

\begin{lem}\label{lem:AInftyIso}
There is a unitary $V$ from $\cH_\infty$ to the GNS Hilbert space $L^{2}(\Gr_\infty,\Tr)$ intertwining the left $\SP{}$ and left $\cA_\infty$ actions.
Hence $\cA_\infty \cong \SP{}$.
\end{lem}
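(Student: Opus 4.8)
The plan is to exhibit an explicit unitary $V$ on a dense subspace and then verify that it is isometric with dense range and intertwines the two left actions. I will work throughout with the identification of each $\cX_n$ with the image of $\bigoplus_{l,r}\cP_{l,n,r}$ in $\SP{}$ from Facts \ref{rem:WalkerMult}(2), under which both the left $\SP{}$-action on $\FP{}$ and the product of $\SP{}$ are the filtered multiplication $\cdot$, together with the given identification $L^2(\SP{},\Tr\circ E_\infty)\cong L^2(\Gr_\infty,\Tr)$ on which $\cA_\infty$ acts by left filtered multiplication. On the algebraic core of finite sums of simple tensors $x\otimes_\cB b$ with $x\in\cP_{l,n,r}\subset\cX_n\subset\FP{}$ and $b\in\cB$, I would set
$$
V(x\otimes_\cB b)=x\cdot b\in\Gr_\infty,
$$
which, since $b$ has no top strands, is just the horizontal concatenation $xb\in\cP_{l,n,r'}$.

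The engine of the argument will be the identity
$$
E_\infty(x^\dagger\cdot y)=\langle x|y\rangle_\cB \qquad (x\in\cP_{l,n,r},\ y\in\cP_{l',m,r'}),
$$
which I would prove by unwinding the filtered product: $x^\dagger\cdot y=\delta_{l,l'}\sum_{j=0}^{\min\{n,m\}}(\cdots)$ is a sum of diagrams with $n+m-2j$ top strands, and $E_\infty$ keeps only the $n+m-2j=0$ summand, forcing $m=n=j$ and producing exactly the full cap-off of the top strands of $x^*$ against $y$, namely $\langle x|y\rangle_\cB$. Since $E_\infty$ is a $\cB$-bimodule map, this lets me match inner products,
$$
\langle V(x\otimes_\cB b),V(y\otimes_\cB b')\rangle=(\Tr\circ E_\infty)\big((x\cdot b)^\dagger\cdot(y\cdot b')\big)=\Tr\big(b^\dagger\langle x|y\rangle_\cB b'\big)=\langle x\otimes_\cB b,y\otimes_\cB b'\rangle_{\cH_\infty},
$$
using $\langle x\otimes\xi,y\otimes\eta\rangle_{\cH_\infty}=\langle\xi,\langle x|y\rangle_\cB\eta\rangle_{L^2(\cB)}$. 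The same identity, compared on $(x\cdot c)\otimes_\cB b$ and $x\otimes_\cB(cb)$ for $c\in\cB$, shows that $V$ respects the balancing over $\cB$, so $V$ is well defined, and it shows $V$ is isometric.

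Next I would check $V$ has dense range: since $V(x\otimes_\cB 1_r)=x$ for $1_r\in\cP_{r,0,r}$ the $r$-strand identity, the range contains every $x\in\cP_{l,n,r}$, hence all of $\Gr_\infty$, so $V$ extends to a unitary $\cH_\infty\to L^2(\Gr_\infty,\Tr)$. For the intertwining, the left action of $T\in\SP{}$ on $\cH_\infty$ is $T\cdot(x\otimes_\cB b)=(T\cdot x)\otimes_\cB b$, so associativity of $\cdot$ gives
$$
V\big(T\cdot(x\otimes_\cB b)\big)=(T\cdot x)\cdot b=T\cdot(x\cdot b)=T\cdot V(x\otimes_\cB b),
$$
and under $L^2(\SP{},\Tr\circ E_\infty)\cong L^2(\Gr_\infty,\Tr)$ left multiplication by $T$ is precisely the $\cA_\infty$-action. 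Finally, since the induced left $\SP{}$-representation on $\cH_\infty$ is faithful and $V$ carries it onto the representation on $L^2(\Gr_\infty,\Tr)$ whose image on the generators $\cP_{l,n,r}$ is exactly $\cA_\infty$, I conclude $\SP{}\cong\cA_\infty$.

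The step I expect to be the main obstacle is the isometry computation, which rests entirely on the identity $E_\infty(x^\dagger\cdot y)=\langle x|y\rangle_\cB$: one must verify that the vacuum conditional expectation of the filtered product collapses precisely onto the $\cB$-valued Fock inner product, and then keep careful track of the $\cB$-balancing and of the dense subspaces (finitely supported elements of $\cB$, finite sums in $\FP{}$) on which $V$ is initially defined before passing to completions.
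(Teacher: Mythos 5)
Your proposal is correct and follows essentially the same route as the paper: you define the same unitary $V$ (horizontal concatenation $x\otimes_\cB b\mapsto x\cdot b$), verify isometry via the same inner-product computation (your identity $E_\infty(x^\dagger\cdot y)=\langle x|y\rangle_\cB$ is exactly the diagrammatic calculation the paper performs), and then fill in the density, balancing, and intertwining steps that the paper dismisses as ``straightforward.'' Your write-up is simply a more detailed account of the paper's own argument, including the correct use of the faithfulness of the left $\SP{}$-action on $\cH_\infty$ noted just before the lemma.
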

\begin{proof}
We define $V$ on elementary tensors of the form $x\otimes a$ where $x\in \cP_{l,n,r}\subset \cX_n$ and $a\in \cP_{r,s}\subset \cB$ by
$$
x\otimes a \mapsto
\begin{tikzpicture}[baseline=0cm]
	\draw (0,0)--(0,.8);
	\draw (-.8,0)--(1.8,0);
	\node at (-.2,.6) {\scriptsize{$n$}};
	\node at (-.6,.2) {\scriptsize{$l$}};
	\node at (.6,.2) {\scriptsize{$r$}};
	\node at (1.6,.2) {\scriptsize{$s$}};
	\nbox{unshaded}{(0,0)}{.4}{0}{0}{$x$}
	\nbox{unshaded}{(1.1,0)}{.3}{0}{0}{$a$}
\end{tikzpicture}\,.
$$
For $y\in \cP_{l',n',r'}\subset \cX_{n'}$ and $b\in \cP_{r',s'}\subset \cB$, we calculate that
$$
\langle x\otimes a | y\otimes b\rangle_\bbC 
= \langle a | \langle x | y\rangle_\cB b\rangle_{L^2(\cB,\Tr)}
=
\delta_{l,l'}\delta_{n,n'}\delta_{s,s'}\,
\begin{tikzpicture}[baseline=-.1cm]
	\draw (-1,.4) arc (180:90:.2cm) -- (-.2,.6) arc (90:0:.2cm);
	\draw (-2.2,0)--(1.2,0) arc (90:-90:.3cm) -- (-2.2,-.6) arc (270:90:.3cm);
	\node at (-.5,.8) {\scriptsize{$n$}};
	\node at (1.4,.1) {\scriptsize{$s$}};
	\nbox{unshaded}{(-1,0)}{.4}{0}{0}{$x^*$}
	\nbox{unshaded}{(0,0)}{.4}{0}{0}{$y$}
	\nbox{unshaded}{(.9,0)}{.3}{0}{0}{$b$}
	\nbox{unshaded}{(-1.9,0)}{.3}{0}{0}{$a^*$}
\end{tikzpicture}\,.
$$
The rest is straightforward.
\end{proof}

Since the map $\Phi$ is the identity on $\cB$, we have the following corollary:

\begin{cor}\label{cor:ZerothGJS}
$1_{k}\SP{}1_{k} \cong \cA_{k}$. 
\end{cor}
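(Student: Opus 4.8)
The plan is to read off this corollary from Lemma \ref{lem:AInftyIso} by showing that the isomorphism $\cA_\infty \cong \SP{}$ constructed there carries the projection $1_k$ to the projection $1_k$, and then compressing both sides. Recall from the definition above that $\cA_k = 1_k \cA_\infty 1_k$, where $1_k \in \cP_{2k} = \cP_{k,0,k} \subset \cB$ is the diagram with $k$ horizontal through strands; this is a projection in $\cB$, and hence a projection in both $\cA_\infty$ and $\SP{}$.

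First I would make the $*$-isomorphism $\pi\colon \SP{} \to \cA_\infty$ of Lemma \ref{lem:AInftyIso} explicit: it is implemented by conjugating the faithful left $\SP{}$-action on $\cH_\infty$ by the unitary $V$ onto the left $\cA_\infty$-action on $L^2(\Gr_\infty, \Tr)$. Both $\cB \subset \SP{}$ and $\cB \subset \cA_\infty$ appear as the degree-zero part, and the left action of $b \in \cB$ is by ordinary multiplication of $\cB$ in each case. For $\cA_\infty$ this uses precisely that the filtered multiplication $\cdot$ restricts to ordinary multiplication on the $\cP_{l,0,r}$, equivalently that $\Phi$ is the identity on $\cB$. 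Since $V$ intertwines these two $\cB$-actions (as is visible from its values $V(x\otimes a)$ on elementary tensors together with the definition of the left $\cB$-module structures), $\pi$ restricts to the identity on $\cB$; in particular $\pi(1_k) = 1_k$.

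Finally, a $*$-isomorphism that sends a projection to a projection restricts to a $*$-isomorphism of the associated corners, so $\pi$ yields $1_k \SP{} 1_k \cong 1_k \cA_\infty 1_k = \cA_k$, which is the claim. The one step that genuinely needs checking, and which I expect to be the main (if modest) obstacle, is the verification that $\pi|_{\cB} = \id$, i.e. that $V$ really does intertwine the two copies of the $\cB$-action; this is exactly the place where the hypothesis that $\Phi$ is the identity on $\cB$ enters, ensuring that the degree-zero subalgebras are matched by $\pi$ rather than merely abstractly isomorphic. Once that identification is in hand, the passage to corners is automatic, so I would keep the write-up short, citing Lemma \ref{lem:AInftyIso} for the ambient isomorphism and invoking $\Phi|_{\cB}=\id$ for the compatibility on $\cB$.
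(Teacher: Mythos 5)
Your proposal is correct and follows essentially the same route as the paper: the paper's entire proof is the observation (stated in the sentence preceding the corollary) that $\Phi$ is the identity on $\cB$, so that the isomorphism $\cA_\infty \cong \cS(\cP_\bullet)$ of Lemma \ref{lem:AInftyIso} fixes $\cB$ pointwise, in particular fixes $1_k$, and therefore restricts to an isomorphism of the corners $1_k\cS(\cP_\bullet)1_k \cong 1_k\cA_\infty 1_k = \cA_k$. Your write-up merely makes explicit the intertwining of the two $\cB$-actions by $V$, which is exactly the content the paper leaves implicit.
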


For the rest of this article, we identify $\cA_{\I}$ with  $\SP{}$, and we always write $\cA_{\I}$.

\subsection{The shaded case}
When $\cP_{\bullet}$ is a subfactor planar algebra, then as in \cite{MR2732052,MR2645882}, we define
$$
\Gr_{k,\pm} = \bigoplus_{n=0}^{\infty} \cP_{2k + n,\pm (-1)^k}.
$$
As before, we picture elements in $\Gr_{k,\pm}$ as boxes of the form
$
\begin{tikzpicture}[baseline=-.1cm]
	\draw (-.6, 0)--(.6, 0);
	\draw (0, 0)--(0, .6);
	\nbox{unshaded}{(0,0)}{.3}{0}{0}{$x$}
	\node at (.2, .6) {{\scriptsize{$n$}}};
	\node at (-.5, .2) {{\scriptsize{$k$}}};
	\node at (.5, .2) {{\scriptsize{$k$}}};
\end{tikzpicture}
$
with the marked region on the bottom of the box. In this manner, the upper left corner will be shaded $\pm$ and the shading in the marked region depends on the parity of $k$.

As in the first two sections, we can complete these to form $C^{*}$-algebras $\cA_{k, \pm}$ and von Neumann algebras $\cM_{k, \pm}$.  As was shown in \cite{MR2732052}, $\cM_{0, \pm}' \cap \cM_{k, \pm} \cong \cP_{2k, \pm}$.  

We also form semifinite algebras $\Gr_{\infty,\pm}$ which are given by 
$$
\Gr_{\I, \pm} = \bigoplus_{l, n, r = 0}^{\infty} \cP_{l, n, r, \pm}
$$
where the marked region is on the bottom of the box and is always $\pm$.  
Just as in the previous two sections, $\Gr_{\I,\pm}$ completes to $C^{*}$-algebras $\cA_{\I, \pm}$ and $\cM_{\I, \pm}$ using either of the two multiplications and traces introduced there.  
If $1_{k, \pm}$ is the element in $\cP_{2k,\pm}$ with $k$ through strings, then we have the following:
\begin{itemize}
\item 
$1_{k, \pm}\cA_{\I, \pm}1_{k, \pm} \cong
\begin{cases}
\cA_{k,\pm} & \text{for $k$ even}
\\
\cA_{k,\mp} & \text{for $k$ odd.}
\end{cases}
$
\item 
$\cS(\cP_{\bullet, \pm}) \cong \cA_{\I,\pm}$.

\end{itemize}
 
Throughout the rest of this article, we assume that $\cP_{\bullet}$ is unshaded or that the shading is implicit.  
If we need a specific shading for an example, it will be explicitly mentioned.


\section{Strong Morita equivalence}\label{sec:Morita}

The main result of this section is showing that $\cA_{0}$ is Morita equivalent to $\cS(\Gamma)$ from Subsection \ref{sec:compression}.  This will show that $A_{0}$ is strongly Morita equivalent to $\cA_{k}$ for all $k$ and that $\cA_{\I} \cong \cA_{0} \otimes \cK(H)$.  The computation of the $K$-groups of the $\cA_{k}$ will follow as a result.

After recalling the basic definitions and properties of strong Morita equivalence, we will exhibit a simple planar algebraic proof of Morita equivalence in finite depth.  We will then collect some facts about reduced free products which will allow us to show our desired Morita equivalence computations.

\subsection{Strong Morita equivalence}

To begin, we recall the basic notions of strong Morita equivalence. 
In \cite{MR0353003,MR0367670}, Rieffel defined the notion of strong Morita equivalence for $C^*$-algebras via equivalence bimodules. 
We will use an equivalent definition below. 
To simplify our definitions, we will assume all our $C^*$-algebras are separable.

\begin{defn}\label{defn:StrongRME}
Two $C^*$-algebras $A,B$ are \underline{strongly Morita equivalent}, denoted $A\sim B$,  if there is a $C^*$-algebra $C$ such that  $A,B$ are full hereditary subalgebras of $C$.
Recall that $A\subset C$ is full if $CAC$ is dense in $C$ and hereditary if $a\in A^+$ and $0\leq c\leq a$ implies $c\in A$.
\end{defn}

We also use the following facts.

\begin{facts}\label{thm:StableIsoME}
\mbox{}
\be
\item
$A\sim B$ if and only if $A,B$ are stably isomorphic, i.e. $A\otimes \cK \cong B\otimes \cK$ \cite[Theorem 1.2]{MR0463928}.
\item
If $A\sim B$, then $K_*(A)\cong K_*(B)$.
\ee
\end{facts}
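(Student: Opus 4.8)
The plan is to recognize the two assertions as, respectively, the Brown--Green--Rieffel theorem and the stability of $K$-theory, and to indicate the mechanism behind each. Throughout I use the standing separability hypothesis of Definition~\ref{defn:StrongRME}, which supplies the $\sigma$-unitality needed below.

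For item~(1) I would prove both implications. For the forward direction, suppose $A$ and $B$ are full hereditary subalgebras of a separable $C^*$-algebra $C$. The essential analytic input is Brown's stabilization theorem: a full hereditary $\sigma$-unital subalgebra $A$ of a $\sigma$-unital $C^*$-algebra $C$ satisfies $A\otimes\cK\cong C\otimes\cK$. Applying this to $A\subset C$ and to $B\subset C$ gives $A\otimes\cK\cong C\otimes\cK\cong B\otimes\cK$, so $A$ and $B$ are stably isomorphic. For the converse, suppose $A\otimes\cK\cong B\otimes\cK$ and set $C=A\otimes\cK$. Then $A\cong A\otimes e_{11}$ is the corner of $C$ cut by the multiplier projection $1\otimes e_{11}$, where $e_{11}$ is a rank-one projection, hence is hereditary; it is full because the ideal it generates contains every $A\otimes e_{ij}$ and is therefore dense in $C$. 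Transporting $B\otimes e_{11}$ through the given isomorphism exhibits $B$ as a second full hereditary subalgebra of $C$, so $A\sim B$.

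For item~(2) I would invoke that $K_*$ is a stable functor: the embedding $a\mapsto a\otimes e_{11}$ induces an isomorphism $K_*(A)\xrightarrow{\ \cong\ }K_*(A\otimes\cK)$, the standard stability property of $K$-theory. Given $A\sim B$, item~(1) provides $A\otimes\cK\cong B\otimes\cK$, and hence
$$
K_*(A)\cong K_*(A\otimes\cK)\cong K_*(B\otimes\cK)\cong K_*(B).
$$

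The only genuinely hard step is Brown's stabilization theorem underpinning the forward direction of~(1); the converse and all of~(2) are formal, relying only on corner subalgebras and the stability of $K$-theory. As these are established results, in the actual write-up I would cite \cite{MR0463928} for~(1) and record~(2) as an immediate consequence, with the above merely unpacking the content being used.
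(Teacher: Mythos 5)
Your proposal is correct and takes essentially the same route as the paper: the paper offers no proof of these facts, citing precisely the Brown--Green--Rieffel theorem (\cite[Theorem 1.2]{MR0463928}) for (1) and treating (2) as standard, and your sketch --- Brown's stabilization theorem applied to $A\subset C$ and $B\subset C$ for the forward direction, the full hereditary corner $A\otimes e_{11}\subset A\otimes\cK$ for the converse, and stability of $K_*$ for (2) --- is an accurate unpacking of exactly what those citations provide under Definition \ref{defn:StrongRME}.
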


Recall that a $C^*$-algebra is simple if it has no closed 2-sided ideals.
The following proposition is well known, but we include a short proof for the reader's convenience.

\begin{prop}\label{prop:Simple}
Suppose $A\sim B$ via $C$ as in Definition \ref{defn:StrongRME}. Then simplicity of $A$, $B$, or $C$ implies the simplicity of all three.
\end{prop}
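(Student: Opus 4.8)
The plan is to collapse all the required implications into a single two-sided statement. Since $A$ and $B$ enter Definition \ref{defn:StrongRME} symmetrically---both are full hereditary subalgebras of $C$---it suffices to prove that for \emph{any} full hereditary subalgebra $A\subseteq C$ one has $A$ simple if and only if $C$ simple. Applying this to both $A$ and $B$ then yields the chain of equivalences ($A$ simple $\iff$ $C$ simple $\iff$ $B$ simple), so that simplicity of any one of the three forces simplicity of all three. The whole argument thus reduces to the ideal correspondence between $C$ and its full hereditary subalgebra, and the proof splits into the two directions of that equivalence.

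For the direction $C$ simple $\Rightarrow A$ simple, I would take a nonzero closed ideal $I\trianglelefteq A$ and form $\overline{CIC}$, a closed ideal of $C$. Using an approximate unit $(e_\lambda)$ of $C$ one has $e_\lambda i e_\lambda\to i$ for $i\in I$, so $I\subseteq\overline{CIC}$; hence $\overline{CIC}\neq(0)$ and simplicity of $C$ gives $\overline{CIC}=C$. The conclusion $I=A$ then follows from the key identity $A\cap\overline{CIC}=I$ (stated below), since $A=A\cap C=A\cap\overline{CIC}=I$. Note that fullness is not needed here, which reflects the standard fact that a hereditary subalgebra of a simple $C^*$-algebra is automatically simple.

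For the direction $A$ simple $\Rightarrow C$ simple, let $J\trianglelefteq C$ be a nonzero closed ideal. The first step is to show $A\cap J\neq(0)$, and here fullness enters. Observe that $AJA\subseteq A$ (hereditariness) and $AJA\subseteq J$ (ideal property), so $AJA\subseteq A\cap J$. If $AJA=(0)$, then $(CAC)J(CAC)\subseteq C(AJA)C=(0)$, and fullness $\overline{CAC}=C$ forces $CJC=(0)$; but $J\subseteq\overline{CJC}$ via an approximate unit, so $J=(0)$, a contradiction. Thus $A\cap J$ is a nonzero closed ideal of $A$, so $A\cap J=A$ by simplicity, i.e. $A\subseteq J$; fullness then gives $C=\overline{CAC}\subseteq\overline{CJC}\subseteq J$, whence $J=C$.

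The main obstacle is the hereditary ideal identity $A\cap\overline{CIC}=I$ for a closed ideal $I\trianglelefteq A$, on which the first direction rests. The inclusion $\supseteq$ is immediate. For $\subseteq$, I would first establish $ACI\subseteq I$ and $ICA\subseteq I$: factoring $i=i_1i_2$ with $i_1,i_2\in I$ (legitimate since $\overline{I^2}=I$), one writes $a'ci=(a'ci_1)i_2$ with $a'ci_1\in ACA\subseteq A$ by hereditariness, so $a'ci\in AI\subseteq I$, and symmetrically on the other side. Consequently $\overline{ACICA}\subseteq I$. Given $a\in A\cap\overline{CIC}$ and an approximate unit $(e_\lambda)$ of $A$, we have $e_\lambda a e_\lambda\in A\,\overline{CIC}\,A\subseteq\overline{ACICA}\subseteq I$, and $a=\lim_\lambda e_\lambda a e_\lambda\in\overline{I}=I$. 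This factorization-plus-squeeze argument is the one genuinely technical point; the rest of the proposition is bookkeeping with the two directions above.
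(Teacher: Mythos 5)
Your proof is correct, but it takes a genuinely different route from the paper. The paper's proof is a two-line argument via stable isomorphism: since $A$ and $C$ are both full hereditary subalgebras of $C$, Fact \ref{thm:StableIsoME}(1) (the Brown--Green--Rieffel theorem) gives $A\otimes\cK\cong C\otimes\cK$, and simplicity passes through tensoring with $\cK$ in both directions, so $A$ simple $\iff$ $C$ simple (and likewise for $B$). Your argument instead establishes the same equivalence by hand, through the ideal correspondence for a full hereditary subalgebra: the direction ``$A$ simple $\Rightarrow C$ simple'' via $AJA\subseteq A\cap J$ and fullness, and the direction ``$C$ simple $\Rightarrow A$ simple'' via the identity $A\cap\overline{CIC}=I$, whose proof by Cohen-type factorization $i=i_1i_2$ and the squeeze $a=\lim_\lambda e_\lambda a e_\lambda$ is sound (note you quietly use the standard characterization that a closed hereditary subalgebra satisfies $ACA\subseteq A$, which is fine but worth citing). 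What each approach buys: the paper's proof is shorter given the facts it has already quoted, but it leans on a deep theorem and on the standing separability assumption that Brown's theorem requires; your proof is elementary, self-contained, and works without separability, and it isolates the reusable statement that a \emph{full} hereditary subalgebra is simple if and only if the ambient algebra is (fullness being needed only in one direction, as you correctly observe).
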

\begin{proof}
If $A$ is simple, then $A\otimes \cK$ is simple. Since $A\otimes \cK\cong C\otimes \cK$, we get that $C\otimes \cK$ is simple, and thus so is $C$.  The other cases have similar arguments.
\end{proof}

\subsection{Some easy consequences}

In this subsection, we use the graded convention, in which the cup elements $\cup_{j, k}$ in Fact \ref{fact:CupInvertible} are invertible.

\begin{lem}\label{lem:StrongRME}
For $i<j$, $\cA_i\sim \cA_j$ if $\cA_j=1_j\cA_{\I}1_i \cA_{\I} 1_j$.
\end{lem}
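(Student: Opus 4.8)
The plan is to realize $\cA_i$ and $\cA_j$ as full hereditary subalgebras of one and the same closed two-sided ideal of $\cA_\I$, and then quote Definition \ref{defn:StrongRME}. Write $C=\cA_\I$ and, for a projection $p\in C$, let $I_p=\overline{CpC}$ be the closed ideal it generates. For \emph{any} projection $p$, the corner $pCp$ is a full hereditary subalgebra of $I_p$: it is hereditary because it is a corner, it is contained in $I_p$ since $p\in I_p$ forces $pCp=pI_pp$, and it is full because $\overline{I_p\,p\,I_p}=\overline{CpC}=I_p$. Applying this to $p=1_i$ and to $p=1_j$ shows that $\cA_i=1_iC1_i$ is full hereditary in $I_{1_i}$ and that $\cA_j=1_jC1_j$ is full hereditary in $I_{1_j}$.

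It therefore suffices to prove $I_{1_i}=I_{1_j}$; granting this, both corners are full hereditary subalgebras of the common ideal $I:=I_{1_i}=I_{1_j}$, and Definition \ref{defn:StrongRME} yields $\cA_i\sim\cA_j$ at once. I would establish the two inclusions separately. The inclusion $I_{1_j}\subseteq I_{1_i}$ is immediate from the hypothesis: since $\cA_j=1_j\cA_\I 1_i\cA_\I 1_j\subseteq\overline{C1_iC}=I_{1_i}$, the unit $1_j$ of $\cA_j$ lies in $I_{1_i}$, and hence so does the ideal $I_{1_j}$ that it generates. For the reverse inclusion $I_{1_i}\subseteq I_{1_j}$ I would use the standing assumption $i<j$ together with the fact that we are in the graded convention: by Fact \ref{fact:CupInvertible} the cup elements relating the $i$- and $j$-corners are invertible, and this invertibility places $1_i$ inside $\overline{1_iC1_jC1_i}\subseteq I_{1_j}$.

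The crux is this second inclusion, $1_i\in I_{1_j}$. The obstruction to a naive argument is the grading: a product $a\wedge_\I b$ with $a\in 1_iC1_j$ and $b\in 1_jC1_i$ has degree equal to the sum of the two top-string counts, so a single homogeneous cup--cap pair lands in strictly positive degree and can never recover the degree-zero element $1_i$. This is exactly why the subsection works in the graded convention: there Fact \ref{fact:CupInvertible} guarantees that the relevant cup elements are invertible, so the corresponding product $u\in\overline{1_iC1_jC1_i}$ is invertible in the unital algebra $\cA_i$, and multiplying by $u^{-1}\in\cA_i$ exhibits $1_i=u^{-1}u$ in the left ideal $\overline{1_iC1_jC1_i}$. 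Once $I_{1_i}=I_{1_j}$ is in hand the remaining verifications are routine and $\cA_i\sim\cA_j$ follows from Definition \ref{defn:StrongRME}. Equivalently, one may run the whole argument through the imprimitivity bimodule $1_j\cA_\I 1_i$, whose fullness as a left $\cA_j$-module is precisely the hypothesis and whose fullness as a right $\cA_i$-module is the content of Fact \ref{fact:CupInvertible}.
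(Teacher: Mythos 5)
Your proposal is correct, and at bottom it performs the same verification of Definition \ref{defn:StrongRME} as the paper, but packaged in a different ambient algebra and carried out more completely. The paper's proof sets $C=(1_i+1_j)\cA_\I(1_i+1_j)$ and views $\cA_i=1_iC1_i$ and $\cA_j=1_jC1_j$ as its two corners; your ambient algebra is instead the common closed ideal $I_{1_i}=I_{1_j}$ of $\cA_\I$. The two packagings are equivalent: fullness of $\cA_i$ in the paper's $C$ amounts exactly to your inclusion $I_{1_j}\subseteq I_{1_i}$, which is the hypothesis, while fullness of $\cA_j$ in $C$ amounts exactly to $I_{1_i}\subseteq I_{1_j}$. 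What your write-up buys is an explicit treatment of this second condition, which the paper's very terse proof never addresses: you correctly flag that it is not a formal consequence of the hypothesis (for instance, with $C=M_2(\C)\oplus\C$, $p=1$, $q=e_{11}\oplus 0$ one has $qCpCq=qCq$ yet $pCp\not\sim qCq$, as their $K_0$-groups differ), and that it is precisely where the subsection's standing graded convention and Fact \ref{fact:CupInvertible} do real work. The only detail you leave implicit is the element $u$ itself: take the bent identity $s\in\cP_{i,\,j-i,\,j}\subset 1_i\cA_\I 1_j$ (the $i$ left strings pass through, the $j-i$ top strings bend to the right side); then $u=s\wedge_\I s^\dagger$ is exactly the cup $\cup_{j-i,\,i}$, which is invertible in $\cA_i$ by Fact \ref{fact:CupInvertible}, and this factorization through $1_j$ is what places $u$, and hence $1_i=u^{-1}u$, in $\overline{1_i\cA_\I 1_j\cA_\I 1_i}\subseteq I_{1_j}$. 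With that one diagram named, your argument is complete and, if anything, more self-contained than the paper's.
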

\begin{proof}
Set $A=\cA_i$, $B=\cA_j$, and $C=(1_i+1_j)\cA_\I (1_i+1_j)$ as in Definition \ref{defn:StrongRME}.
It is clear that $A=1_iC 1_i$, so $A\sim B$ if $\cA_j=1_j C 1_j$. 
The latter is true if $\cA_j=1_j\cA_{\I}1_i \cA_{\I} 1_j$.
\end{proof}

\begin{fact}[{\cite[Lemma 2]{MR2732052}, \cite[Lemma 3.17]{1208.5505}}]\label{fact:CupInvertible}
Since $\delta>1$, the cabled cup element
$
\cup_{j,k}=
\begin{tikzpicture} [baseline = -.1cm]
 	\draw  (-.4, -.2) -- (.4, -.2);
	\draw (-.25, .4) arc(-180:0: .25cm);
	\node at (-.29, .19) {{\scriptsize{$j$}}};
	\node at (.25, -.05) {{\scriptsize{$k$}}};
	\nbox{}{(0,0)}{.4}{0}{0}{}
\end{tikzpicture}
$
is positive and invertible in $\cM_k$, so it is invertible in $\cA_k$ by spectral permanence.
\end{fact}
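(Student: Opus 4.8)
The element $\cup_{j,k}$ is self-adjoint: in the relevant GJS convention it is fixed by the involution $\dagger$, so it is a self-adjoint element of $\cM_k$. It therefore suffices to show that its spectrum lies in an interval $[c,C]$ with $c>0$, since this yields positivity (spectrum $\geq 0$) and invertibility (spectrum bounded away from $0$) simultaneously. The passage back to $\cA_k$ is then exactly the spectral permanence already quoted in the statement: $\cA_k$ is a unital $C^*$-subalgebra of $\cM_k=\cA_k''$, so the spectrum of $\cup_{j,k}$ is the same whether computed in $\cA_k$ or in $\cM_k$, and invertibility transfers. Thus the entire content is the claim that $0\notin\operatorname{spec}_{\cM_k}(\cup_{j,k})$.

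\textbf{Main line: the spectral distribution.} The plan is to identify the distribution of $\cup_{j,k}$ with respect to the normalized Voiculescu trace $\tau_k$. Realizing $\cup_{j,k}$ as left multiplication on $L^2(\Gr_k,\tau_k)$, I would compute the moments $\tau_k(\cup_{j,k}^{\,m})$ diagrammatically: stacking $m$ copies of the cabled-cup diagram and closing up with the $\Sigma\,\TL$ trace produces a weighted sum over Temperley--Lieb configurations in which each closed loop contributes a factor of $\delta$. The resulting moment sequence is that of a free-Poisson (Marchenko--Pastur type) law whose parameters are governed by $\delta$, in complete parallel with the edge-element computations of Facts \ref{facts:edge}(4)--(6). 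The key qualitative point is that each loop is worth $\delta>1$ rather than $1$: for $\delta>1$ this measure carries no atom at $0$ and its support is an interval bounded away from the origin, whereas the lower edge of the support degenerates to $0$ precisely in the limiting case $\delta=1$. Since $\tau_k$ is faithful and normal on $\cM_k$, the topological support of this measure equals $\operatorname{spec}_{\cM_k}(\cup_{j,k})$, so we obtain the desired $c>0$.

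\textbf{An algebraic shortcut for small cases.} For the single-cup case $j=1$ it may be cleaner to proceed purely algebraically: using the Temperley--Lieb relations one computes the graded product $\cup_{1,k}\wedge_k\cup_{1,k}$ together with the relevant partial traces of $\cup_{1,k}$, obtaining a quadratic relation $\cup_{1,k}^{2}=a\,\cup_{1,k}+b\,1_k$ with $a,b$ explicit polynomials in $\delta$. The spectrum then consists of the two roots of $t^{2}-at-b$, and one checks directly that neither root vanishes when $\delta>1$. It is worth stressing that the lower-order correction terms of the graded (as opposed to naive Temperley--Lieb) multiplication are essential here: a bare cup-cap in $\TL$ is a scalar multiple of a Jones projection, with spectrum $\{0,\delta^{j}\}$ and hence \emph{not} invertible, so it is exactly these corrections that shift the spectrum away from $0$. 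Cabling ($j>1$) and through-strands ($k>0$) should then be absorbed either by iterating the relation through the Jones--Wenzl projections $f^{(j)}$ or, more uniformly, by the moment computation above.

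\textbf{Main obstacle.} The real work is the combinatorial bookkeeping in the moment computation: one must track how the cabling thickness $j$ and the $k$ through-strands interact with the loop-counting so as to pin down the free-Poisson parameters and verify rigorously that the lower endpoint of the support is strictly positive exactly when $\delta>1$. The edge-element analysis of Facts \ref{facts:edge} provides the template for this, and the single input that a closed loop is worth $\delta>1$ is what ultimately forces the spectrum off of $0$.
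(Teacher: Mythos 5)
The paper itself gives no proof of this Fact---it is quoted from \cite[Lemma 2]{MR2732052} and \cite[Lemma 3.17]{1208.5505}---so your proposal has to be measured against those arguments, and for the single cup $j=1$ your main line is exactly the cited proof: one computes $\tau_k(\cup_{1,k}^{\wedge m})=\sum_{\pi}\delta^{|\pi|}$, the sum running over non-crossing partitions $\pi$ of the $m$ cup-copies (the $k$ through strands close into $k$ loops and exactly cancel the $\delta^{-k}$ in the trace), so $\cup_{1,k}$ is free Poisson of rate $\delta$ and jump size $1$, with support $[(\sqrt{\delta}-1)^2,(\sqrt{\delta}+1)^2]$ when $\delta\geq 1$; faithfulness and normality of $\tau_k$ on $\cM_k$ identify this support with the spectrum, and spectral permanence passes invertibility to $\cA_k$. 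However, your ``algebraic shortcut'' is false, and this matters because you also lean on it to treat cabling. In the graded convention $\cup_{1,k}\wedge_k\cup_{1,k}$ is the side-by-side two-cup diagram, which is linearly independent of $1_k$ and $\cup_{1,k}$; in the filtered convention the product is $(\text{two cups})+\cup_{1,k}+\delta 1_k$, which still contains the two-cup term. So no quadratic (indeed no polynomial) relation holds---as your own main line forces, since an element whose spectral distribution is a continuous free-Poisson law on an interval generates a diffuse abelian algebra, not a two-dimensional one. Consequently ``iterating the relation through the Jones--Wenzl projections'' cannot work for $j\geq 2$.

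That leaves the cabled case $j\geq 2$ as a genuine gap, and the moment computation as you describe it does not close it: the trace sums over \emph{all} Temperley--Lieb cappings of the $2jm$ boundary points, including those that break the cabling, so the distribution of $\cup_{j,k}$ is not the rate-$\delta$ free Poisson law and its lower support edge is not pinned down by your heuristic. Concretely, $\tau_k(\cup_{2,k})=\delta^2+\delta$ (the extra $\delta$ comes from the capping that splits the nested cups) and $\tau_k(\cup_{2,k}^{\wedge 2})=\delta^4+3\delta^3+6\delta^2+4\delta$, so the loop-count no longer agrees with the block-count of non-crossing partitions. A clean way to finish is an operator inequality rather than an exact distribution. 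Let $B\in\cP_{k+j-1,\,j-1,\,k}\subset\cM_\infty$ be the diagram whose bottom $k$ left strings pass through to the right and whose top $j-1$ left strings bend up to the top; then $B^\dagger\wedge_\infty B=\cup_{j-1,k}$ and $B^\dagger\wedge_\infty\cup_{1,k+j-1}\wedge_\infty B=\cup_{j,k}$ (the single cup becomes the innermost arc, the bent strings the outer arcs). Since $\cup_{1,k+j-1}\geq(\sqrt{\delta}-1)^2\,1_{k+j-1}$ by the $j=1$ case, compressing by $B$ gives $\cup_{j,k}\geq(\sqrt{\delta}-1)^2\,\cup_{j-1,k}$ inside $1_k\cM_\infty 1_k\cong\cM_k$, and induction yields $\cup_{j,k}\geq(\sqrt{\delta}-1)^{2j}\,1_k>0$, which is positivity and invertibility at once. (Also, the through strands need no bookkeeping at all: $\cup_{j,k}$ is the image of $\cup_{j,0}$ under the unital, trace-preserving inclusion $\cM_0\subset\cM_k$, so one may take $k=0$ throughout.)
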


\begin{defn}
We denote the inverse of $\cup_{j,k}$ by the formal symbol
\begin{tikzpicture} [baseline = -.1cm]
	\draw (0,0)--(0,.7);
 	\draw  (-.6, 0) -- (.6, 0);
	\nbox{unshaded}{(0,0)}{.4}{0}{0}{$\cup_{j,k}^{-1}$}
\end{tikzpicture}\,,
even though it is not in $\Gr_k$.
\end{defn}

We can now show that for finite depth $\cP_\bullet$, eventually the GJS $C^*$-algebras are all strongly Morita equivalent.

\begin{thm}\label{thm:FiniteDepth}
Suppose $\cP_\bullet$ has depth $n<\I$. Then $\cA_{n-1}\sim \cA_{k}$ for all $k\geq n-1$.
\end{thm}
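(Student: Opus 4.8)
The plan is to reduce $\cA_{n-1}\sim \cA_k$ for all $k\geq n-1$ to the single criterion established in Lemma \ref{lem:StrongRME}, namely that it suffices to show $\cA_k = 1_k \cA_\I 1_{n-1}\cA_\I 1_k$ for each $k\geq n-1$. Since strong Morita equivalence is transitive (it corresponds to stable isomorphism by Facts \ref{thm:StableIsoME}), it is enough to prove $\cA_k \sim \cA_{k+1}$ for all $k\geq n-1$; then chaining these equivalences back to $\cA_{n-1}$ gives the result. So the real content is verifying the factorization hypothesis of Lemma \ref{lem:StrongRME} for the consecutive pair $i=n-1$, $j=k$ (or more economically for each consecutive step $i=k$, $j=k+1$ with $k\geq n-1$).

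The key algebraic input is that once we are at or past the depth, the finite-dimensional algebras $\cP_{2k}$ stabilize in the sense that every higher box can be recovered diagrammatically by capping off and re-expanding through the lower level. Concretely, I would use the invertibility of the cabled cup element $\cup_{j,k}$ from Fact \ref{fact:CupInvertible}: since $\delta>1$ and $\cP_\bullet$ has depth $n$, the element $1_k$ can be written as a sum of products passing through $1_{n-1}$, using $\cup_{j,k}^{-1}$ to undo the caps. The heart of the argument is a diagrammatic identity of the form
$$
1_k = \sum \; (\text{element of } 1_k\cA_\I 1_{n-1}) \cdot (\text{element of } 1_{n-1}\cA_\I 1_k),
$$
which one reads off from a Pimsner-Popa-type basis for $\cP_{2k}$ over $\cP_{2(n-1)}$. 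Because the depth is finite, such a basis exists and is finite; invertibility of the relevant cup elements guarantees the basis expansion is genuinely realized inside $\cA_\I$ rather than merely in $\cM_\I$ (this is exactly the point of working in the graded convention, where the cups are invertible by spectral permanence). From this identity, $1_k \in 1_k \cA_\I 1_{n-1}\cA_\I 1_k$, and since $1_k$ is the unit of $\cA_k = 1_k\cA_\I 1_k$, multiplying on both sides shows $\cA_k \subseteq 1_k\cA_\I 1_{n-1}\cA_\I 1_k$; the reverse containment is automatic. Lemma \ref{lem:StrongRME} then yields $\cA_{n-1}\sim \cA_k$.

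\textbf{The main obstacle} I expect is establishing the diagrammatic factorization of $1_k$ through $1_{n-1}$ in a way that lives in $\cA_\I$ and not just in the von Neumann completion $\cM_\I$. The naive Pimsner-Popa basis argument produces an expansion of the identity in $\cM_k$, and one must check that each basis element and its adjoint, together with the inverse cup elements used to normalize them, are actually norm-limits of elements of $\Gr_\I$ — i.e. that they lie in the $C^*$-algebra. This is precisely where Fact \ref{fact:CupInvertible} does the work: spectral permanence tells us the inverses $\cup_{j,k}^{-1}$ belong to $\cA_k$ rather than only to $\cM_k$, so the whole expansion stays at the $C^*$-level. A secondary technical point is bookkeeping the grading: the relevant elements of $1_k\cA_\I 1_{n-1}$ carry a through-string count and one must confirm that the capping maps between depth $k$ and depth $n-1$ respect the $\cA_\I$-multiplication $\wedge_\I$ so that the product genuinely recombines to $1_k$. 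I would handle this by fixing an explicit finite dual basis $\{b_i\}$ for the inclusion $\cP_{2(n-1)}\subset \cP_{2k}$ coming from the finite-depth assumption, inserting the normalizing cups, and then verifying the single identity $1_k=\sum_i b_i \wedge_\I b_i^\dagger$ (suitably interpreted) by a direct diagrammatic computation.
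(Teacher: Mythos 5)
Your proposal is correct and follows essentially the same route as the paper: reduce to Lemma \ref{lem:StrongRME}, factor $1_k$ through $1_{n-1}$ using finite-depth Pimsner-Popa bases, and invoke the invertible cup elements of Fact \ref{fact:CupInvertible} (via spectral permanence) so the factorization lives in $\cA_\I$ rather than only in $\cM_\I$. The only difference is organizational --- the paper chains equivalences in steps of two (where no cups are needed, since $\cP_m e_m \cP_m = \cP_{m+1}$ for $m \geq n$) plus a single cup-assisted step $\cA_{n-1}\sim\cA_n$, whereas you insert cups at every step --- but the ingredients and mechanism are identical.
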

\begin{proof}
Since $\cP_\bullet$ has depth $n$, we know that $\cP_{n}e_{n} \cP_{n}=\cP_{n+1}$, and hence $n+1$ through strings factors through $n-1$ through strings. This means there is a finite Pimsner-Popa basis $\{b\}\subset \cP_n$ such that $\sum_b be_nb^*=1_{n+1}$. In diagrams, we have
$$
\begin{tikzpicture}[baseline = -.1cm]
	\draw (-.8, 0)--(.8, 0);
	\node at (0, -.2) {{\scriptsize{$n+1$}}};
\end{tikzpicture}
=
\begin{tikzpicture}[baseline = -.1cm]
	\draw (-1.4, 0)--(2.2, 0);
	\draw (0,.2) arc (-90:90:.2cm) -- (-1.4,.6);
	\draw (1,.2) arc (270:90:.2cm) -- (2.2,.6);
	\nbox{unshaded}{(-.4,0)}{.4}{0}{0}{$b$}
	\nbox{unshaded}{(1.4,0)}{.4}{0}{0}{$b^*$}
	\node at (-1.2, -.2) {{\scriptsize{$n$}}};
	\node at (.5, -.2) {{\scriptsize{$n-1$}}};
	\node at (2, -.2) {{\scriptsize{$n$}}};
\end{tikzpicture}\,,
$$
and we immediately see that $1_{n+1}\cA_{\I}1_{n-1} \cA_{\I} 1_{n+1}=\cA_{n+1}$, and thus $\cA_{n-1}\sim \cA_{n+1}$ by Lemma \ref{lem:StrongRME}.
Also, $\cA_k\sim \cA_{k+2}$ for $k\geq n$ follows similarly since $k+2$ strings factors through $k$ strings by taking a Pimsner-Popa basis for $\cP_{k+1}$ over $\cP_k$.

Finally, we show $\cA_{n-1}\sim \cA_n$. 
As above, let $\{b\}\subset \cP_n$ be a Pimsner-Popa basis for $\cP_n$ over $\cP_{n-1}$. Then we have
$$
\begin{tikzpicture}[baseline = -.1cm]
	\draw (-.8, 0)--(.8, 0);
	\node at (0, -.2) {{\scriptsize{$n$}}};
\end{tikzpicture}
=
\begin{tikzpicture}[baseline = -.1cm]
	\draw (-1, -.1)--(1.2, -.1);
	\nbox{}{(-.4,0)}{.4}{0}{0}{}
	\node at (-.4, -.25) {{\scriptsize{$n$}}};
	\draw (-.6,.8) -- (-.6,.4) arc (-180:0:.2cm) -- (-.2,.8);
	\draw (.6, 0)--(.6, .8);
	\nbox{unshaded}{(.6,0)}{.4}{0}{0}{$\cup_{1,n}^{-1}$}
\end{tikzpicture}
=
\begin{tikzpicture}[baseline = -.1cm]
	\draw (-1.4, 0)--(3.4, 0);
	\draw (0,.2) arc (-90:90:.2cm) -- (-.8,.6) arc (270:180:.2cm);
	\draw (1,.2) arc (270:90:.2cm) -- (1.8,.6) arc (-90:0:.2cm);
	\nbox{unshaded}{(-.4,0)}{.4}{0}{0}{$b$}
	\nbox{unshaded}{(1.4,0)}{.4}{0}{0}{$b^*$}
	\node at (-1.2, -.2) {{\scriptsize{$n$}}};
	\node at (.5, -.2) {{\scriptsize{$n-1$}}};
	\node at (2, -.2) {{\scriptsize{$n$}}};
	\draw (2.6, 0)--(2.6, .8);
	\nbox{unshaded}{(2.6,0)}{.4}{0}{0}{$\cup_{1,n}^{-1}$}
	\node at (3.2, -.2) {{\scriptsize{$n$}}};
\end{tikzpicture}\,.
$$
Hence $1_n \cA_\I 1_{n-1} \cA_\I 1_n=\cA_n$, and again $\cA_{n-1}\sim \cA_n$ by Lemma \ref{lem:StrongRME}.
\end{proof}

\begin{rem}
If $\cP_\bullet$ is shaded, then this argument shows that $\cA_{k,\pm}\sim A_{k\pm 1, \mp}$ for all $k\geq \depth(\cP_\bullet)-1$.
However, in general, it is not the case that $A_{k,\pm}\sim A_{k+1,\pm}$ (see Example \ref{ex:NotME}).
\end{rem}

\subsection{Some useful free product theorems}\label{sec:freeproduct}
The upcoming Morita equivalence arguments, as well as the arguments for simplicity, unique trace, and the positive cone of $K_{0}(\cA_{0})$, rely on some results about reduced free products.  
The first of these is due to Avitzour.
We remind the reader that all free products in this article are reduced.

\begin{thm}[\cite{MR654842}] \label{thm:Avitzour}
Let $(B, \tr) =  (B_{1}, \tr_{1}) \Asterisk (B_{2}, \tr_{2})$ where $\tr_{i}$ is a faithful tracial state on $B_{i}$ for $i \in \{1, 2\}$, and $\tr$ is the canonical free product tracial state.  Suppose that there exist unitaries $u_{1} \in B_{1}$, and $u_{2}, u_{2}' \in B_{2}$ satisfying
$$
\tr(u_{1}) = 0 = \tr(u_{2}) = \tr(u_{2}') = \tr(u_{2}^{*}u_{2}').
$$
Then $B$ is simple, and $\tr$ is the unique tracial state on $B$.
\end{thm}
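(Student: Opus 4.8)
The plan is to isolate from both conclusions a single \emph{averaging} (Dixmier-type) property and then deduce simplicity and uniqueness of the trace formally. Concretely, I would aim to prove that for every self-adjoint $x \in B$,
\[
\tr(x)\,1 \in \overline{\mathrm{conv}}\{\, w x w^* : w \in \cU(B)\,\},
\]
the norm-closed convex hull being taken in $B$ (the self-adjoint case suffices, since for general $x$ one applies it to the real and imaginary parts). Granting this, unique trace is immediate: any tracial state $\phi$ is constant on each unitary orbit $\{wxw^*\}$ and is norm-continuous, hence constant on the closed convex hull, so evaluating at $\tr(x)1$ gives $\phi(x)=\tr(x)\phi(1)=\tr(x)$. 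Simplicity follows from faithfulness of the reduced free-product trace $\tr$: if $J\neq 0$ is a closed two-sided ideal, choose $0\le a\in J$ with $\tr(a)=1$; then every $waw^*$ lies in $J$, and since $J$ is norm-closed and convex the averaging property forces $1=\tr(a)1\in J$, whence $J=B$.

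So the whole theorem reduces to the averaging property, and this is where the hypotheses on $u_1,u_2,u_2'$ enter. I would build, out of these unitaries, a sequence $c_1,c_2,\dots\in B$ of alternating reduced words in the letter $u_1\in B_1$ and letters chosen from the pair $u_2,u_2'\in B_2$, which are orthonormal in $L^2(B_2,\tr_2)$ precisely because $\tr(u_2)=\tr(u_2')=\tr(u_2^*u_2')=0$. The pattern of letters would be indexed by $n$ so that, for a centered element $x_0=x-\tr(x)1$, the conjugates $c_n x_0 c_n^*$ become mutually ``orthogonal'': expanding a cross term $\tr\big((c_m x_0 c_m^*)^*(c_n x_0 c_n^*)\big)$ for $m\neq n$ and reducing via freeness, every surviving $B_2$-syllable is one of the centered letters $u_2,u_2',u_2^*,u_2'^*$, never a product such as $u_2^2$ whose trace is uncontrolled. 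This is exactly the reason for carrying \emph{two} unitaries in $B_2$: it keeps the words reduced with centered syllables, so the free-product state annihilates all cross terms and one obtains the $L^2$-decay
\[
\Big\| \tfrac1N\sum_{n=1}^N c_n x_0 c_n^* \Big\|_2 = O(N^{-1/2})\,\|x_0\|.
\]

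The main obstacle is upgrading this $L^2$-estimate to a $C^*$-norm estimate, since the averaging property lives in the operator norm. Here I would either reproduce Avitzour's direct bound on the operator norm of such averaged sums — a free analogue of Powers' averaging argument for $C^*_r(\F_2)$, in which the spreading unitaries push the ``mass'' of $x_0$ into nearly disjoint pieces — or invoke a Haagerup/noncommutative-Khintchine-type inequality to control $\big\|\tfrac1N\sum_n c_n x_0 c_n^*\big\|$ by the $L^2$-quantities just estimated. It is enough to carry this out for $x$ in the (dense) algebraic free product and then extend by uniformity. Either route yields, for every $\e>0$, unitaries with $\big\|\tfrac1N\sum_n c_n x c_n^* - \tr(x)1\big\|<\e$, establishing the averaging property.

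Finally I would record the two standing facts used in the formal deductions: that the canonical free-product state $\tr$ is genuinely tracial (given in the hypothesis) and is \emph{faithful}, the latter being standard for a reduced free product of $C^*$-algebras with faithful traces via the free-product GNS construction. With these in hand, the passages from the averaging property to unique trace and to simplicity go through as above, completing the argument.
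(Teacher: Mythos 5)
This theorem is not proved in the paper at all---it is quoted with the citation \cite{MR654842}, so the only benchmark is Avitzour's original argument---and your outline reconstructs essentially that argument: a Dixmier/Powers-type averaging property (conjugating a centered element by alternating words in $u_1$, $u_2$, $u_2'$, whose centeredness and the condition $\tr(u_2^*u_2')=0$ keep all cross terms reduced and hence trace-zero), from which unique trace follows formally and simplicity follows using faithfulness of the reduced free-product trace, exactly as you say. Your formal deductions are complete and correct, and the one step you defer---upgrading the $L^2$-orthogonality to an operator-norm bound via the Powers-style geometric lemma on operators with nearly orthogonal supports---is indeed the technical core of Avitzour's paper, and the mechanism you name for it is the right one.
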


If $B_{1}$ and $B_{2}$ contain such unitaries, we say that $B$ satisfies the \underline{Avitzour condition}.  
A unital $C^{*}$-algebra $B$ has \underline{stable rank 1} if the set of invertible elements is dense in $B$.   
Dykema, Haagerup, and R{\o}rdam showed that certain reduced free products have stable rank 1.

\begin{thm}[\cite{MR1478545}] \label{thm:AvitzourSR}
Suppose $(B, \tr) =  (B_{1}, \tr_{1}) \Asterisk (B_{2}, \tr_{2})$ satisfies the Avitzour condition as in Theorem \ref{thm:Avitzour}.  Then $B$ has stable rank 1.
\end{thm}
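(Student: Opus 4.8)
The plan is to recognize that this is exactly the stable-rank computation of Dykema, Haagerup, and R{\o}rdam in \cite{MR1478545}, of which our statement is a verbatim special case, so in the paper I would simply cite \cite{MR1478545}; below I indicate how one would reconstruct the argument. Recall that $B$ has stable rank $1$ means that the invertible elements are dense in $B$, so it suffices, given $a \in B$ and $\e > 0$, to produce an invertible element of $B$ within $\e$ of $a$. First I would pass to the enveloping finite von Neumann algebra $\cM = B''$ in the GNS representation of $\tr$, which the Avitzour condition renders a {\rm II}$_1$ factor with faithful normal trace still denoted $\tr$. Take the polar decomposition $a = v|a|$ in $\cM$, so that $v$ is a partial isometry with $v^{*}v = p$ the support projection of $|a|$ and $vv^{*} = q$ its range projection.

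In $\cM$ the approximation is immediate: since $\tr(p) = \tr(vv^{*}) = \tr(q)$, the projections $1-p$ and $1-q$ have equal trace in the factor $\cM$, hence are Murray--von Neumann equivalent, so $v$ completes to a unitary $u = v + w \in \cM$ with $w$ a partial isometry from $1-p$ onto $1-q$. Because $w(1-p) = w$ and $(1-p)|a| = 0$ we have $w|a| = 0$, whence $u|a| = v|a| = a$ and therefore $u(|a| + \e 1) = a + \e u$ is an invertible element of $\cM$ lying within $\e$ of $a$. The entire difficulty is that this witness lives in $\cM$ and not in $B$: the completing partial isometry $w$ is a purely von Neumann-algebraic object, whereas stable rank $1$ must be verified inside the $C^{*}$-algebra $B$ itself.

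This is where the Avitzour condition does its real work. The trace-zero unitaries $u_1 \in B_1$ and $u_2, u_2' \in B_2$ generate, by freeness, Haar unitaries that are free from any prescribed finite-dimensional subalgebra of $B$; such a Haar unitary supplies, inside $B$, a sequence of partial isometries $w_n$ whose source and range projections approximate $1-p$ and $1-q$ and for which $v + w_n$ is asymptotically a unitary of $B$. Running the computation of the previous paragraph with $w_n$ in place of $w$ then yields genuine invertibles of $B$ converging to $a$. I expect the main obstacle to be precisely this $C^{*}$-algebraic completion step—turning the abstract equivalence $1-p \sim 1-q$ in $\cM$ into an approximate equivalence implemented by elements of $B$ built from free Haar unitaries—together with the norm and invertibility estimates on reduced words that control it; these estimates are the technical core of \cite{MR1478545}.
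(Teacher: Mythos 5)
The gap is in your second step, and it is fatal to the whole strategy. Your von Neumann argument is fine as far as it goes, but the witnesses it produces live outside $B$: the polar part $v$ of $a$ is an element of $\cM = B''$, not of $B$, so every element of the form $(v+w_{n})(|a|+\varepsilon 1)$ you propose to build fails to lie in $B$ regardless of how the $w_{n}$ are chosen --- you cannot repair membership in a $C^{*}$-algebra by adding something to a vector that is not in it. Worse, the bridge you invoke (``free Haar unitaries supply partial isometries $w_{n}\in B$ whose source and range projections approximate $1-p$ and $1-q$'') is not merely unproved but false in general: reduced free products satisfying the Avitzour condition can be projectionless (e.g.\ $C^{*}_{r}(\mathbb{F}_{2})$, or the algebra $\cA_{0}$ of this paper when all quantum dimensions are integers), so $B$ contains no projections near $1-p$ and $1-q$ and no partial isometries with the claimed supports. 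Finally, the step you defer to the ``technical core'' of \cite{MR1478545} --- producing a unitary $u\in B$ with $u|a|$ close to $a$ --- is essentially equivalent to stable rank $1$ itself (approximate polar decomposition by unitaries of $B$), so the proposal is circular: the entire content of the theorem is the part you have outsourced, and \cite{MR1478545} does not prove that part either, because their argument never approximates a polar decomposition.

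What Dykema--Haagerup--R{\o}rdam actually do, and what this paper reproduces in the amalgamated setting (see the proof of Theorem \ref{thm:Amalg.sr1} and Lemma \ref{lem:2normpower}), is a purely $C^{*}$-algebraic norm estimate on reduced words: one builds orthonormal word bases $Y_{n}$ of $B$, proves an operator-norm versus $2$-norm bound of the form $\|y\|\leq (2k+1)^{3/2}K(y)\|y\|_{2}$ for $y$ supported on words of length at most $k$, and then uses the Avitzour unitaries to manufacture unitaries $u,w\in B$ (not in $\cM$) for which $\|(uyw)^{n}\|_{2}\leq\|uyw\|_{2}^{n}$, so that the spectral radius satisfies $r(uyw)\leq\|y\|_{2}$. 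This gives $\dist(y,GL(B))\leq\|y\|_{2}$ on a dense subalgebra, and one concludes by R{\o}rdam's criterion \cite{MR951510}: a counterexample to stable rank $1$ would yield $a$ with $\|a\|=1=\dist(a,GL(B))$, forcing $\|a\|_{2}=\|a\|$, hence $a$ unitary, a contradiction. No passage to $B''$, no polar decomposition, and no comparison of projections enters; the free-probabilistic input is used to control norms of powers of twisted elements, not to implement Murray--von Neumann equivalences inside $B$.
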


Recall that for a $C^{*}$-algebra $B$, the positive cone $K_{0}(B)^{+}$ of $K_{0}(B)$ is the set
$$
K_{0}(B)^{+} = \{x \in K_{0}(B): x = [p] \text{ for } p \in P(M_{n}(B))\}.
$$
Dykema and R{\o}rdam observed that $K_{0}(B)^{+}$ can be quite large for $B$ a free product.

\begin{thm}[\cite{MR1601917}]\label{thm:positivecone}
Suppose $(B, \tr) =  (B_{1}, \tr_{1}) \Asterisk (B_{2}, \tr_{2})$ satisfies the Avitzour condition as in Theorem \ref{thm:Avitzour}.  Let $j_{i}$ be the canonical inclusion of $B_{i}\hookrightarrow B$ and $K_{0}(j_{i})$ the induced map on $K$-theory.  Let $G$ be the subgroup $K_{0}(j_{1})(K_{0}(B_{1})) + K_{0}(j_{2})(K_{0}(B_{2}))$ of $K_{0}(B)$.  Then
$$
K_{0}(B)^{+} \cap G = \{x \in G: \tr(x) > 0\} \cup \{0\}.
$$
\end{thm}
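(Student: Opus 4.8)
The plan is to prove the two inclusions separately, concentrating the real work in the ``$\supseteq$'' direction. First I would record that, by Theorem \ref{thm:Avitzour}, $B$ is simple with unique tracial state $\tr$, and that $\tr$ is automatically faithful: the set $\{b \in B : \tr(b^{*}b) = 0\}$ is a closed two-sided ideal which is proper (it misses $1$), hence $(0)$ by simplicity. Extending $\tr$ to the usual homomorphism $K_{0}(B) \to \bbR$ sending $[p] \mapsto \tr(p)$, the inclusion $K_{0}(B)^{+} \cap G \subseteq \{x \in G : \tr(x) > 0\} \cup \{0\}$ is then immediate: any nonzero $x \in K_{0}(B)^{+}$ equals $[p]$ for a nonzero projection $p$ over $B$, and faithfulness forces $\tr(x) = \tr(p) > 0$.

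For the reverse inclusion, fix $x \in G$ with $\tr(x) > 0$. Since $G$ is generated by classes of projections over $B_{1}$ and $B_{2}$, and $K_{0}(j_{i})$ carries such a class to the class of a genuine projection over $B$, I would write $x = [P] - [Q]$, where $P, Q$ are projections in some $M_{N}(B)$, each a direct sum (with multiplicities) of projections pushed in from $B_{1}$ and $B_{2}$, and where $\tr(P) > \tr(Q)$. The theorem then reduces to a single comparison statement: if $P, Q$ are projections over $B$ with $\tr(Q) < \tr(P)$, then $Q \precsim P$ in $M_{\infty}(B)$. Granting this, there is a subprojection $P' \le P$ with $P' \sim Q$, so that $x = [P] - [P'] = [P - P']$ exhibits $x$ as the class of the projection $P - P'$, whence $x \in K_{0}(B)^{+}$.

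The heart of the matter, and the step I expect to be hardest, is this comparison lemma. Here I would exploit the Avitzour condition: the trace-zero unitaries $u_{1}, u_{2}, u_{2}'$ and their words furnish free Haar unitaries, which can be used to reposition $Q$ so as to reduce the comparison to the case where $P$ and a unitary conjugate of $Q$ are free with respect to $\tr$. For a pair of free projections the generated $C^{*}$-algebra is subhomogeneous, with structure governed by the distribution of the angle operator $PQP$, and within it Murray--von Neumann comparison is dictated exactly by the trace; thus $\tr(Q) \le \tr(P)$ produces a partial isometry implementing $Q \precsim P$. The genuine difficulty is that this subequivalence must be realized inside $B$ itself and not merely in the bidual $B''$, so the von Neumann comparison theorem cannot simply be quoted; one instead needs the explicit description of the $C^{*}$-algebra generated by two free projections, with the Haar unitary guaranteeing general position, which is precisely the technical engine supplied by \cite{MR1601917}.

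Finally, I would note the bookkeeping needed to pass from a general $\bbZ$-linear combination of projection classes to the clean form $x = [P] - [Q]$: positive coefficients are absorbed into direct sums forming $P$, negative coefficients into $Q$, and matrix amplification removes any constraint that the individual traces be at most $1$. This reduction is routine once the comparison lemma is in hand, so the entire argument rests on the free-probabilistic comparison step above.
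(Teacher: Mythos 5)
You should know at the outset that the paper contains no proof of this statement: Theorem \ref{thm:positivecone} is imported verbatim from Dykema--R{\o}rdam \cite{MR1601917} (hence the citation in its header) and is used as a black box in Section \ref{sec:properties}. So your proposal can only be judged on its own terms, or as a reconstruction of the external argument. Two parts of it are correct and match that argument's architecture: the inclusion $K_{0}(B)^{+}\cap G\subseteq\{x\in G:\tr(x)>0\}\cup\{0\}$ does follow from simplicity and faithfulness of $\tr$ exactly as you say, and the bookkeeping that writes any $x\in G$ with $\tr(x)>0$ as $[P]-[Q]$, where $P$ and $Q$ are block-diagonal sums of projections pushed in from $B_{1}$ and $B_{2}$ with $\tr(P)>\tr(Q)$, is a valid reduction.

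The gap is in the comparison step, in two respects. First, you state the lemma for \emph{arbitrary} projections $P,Q$ over $B$ with $\tr(Q)<\tr(P)$; that is strict comparison of projections for $B$, which is strictly stronger than the theorem --- if it held, every $x\in K_{0}(B)$ with $\tr(x)>0$ would be positive, and the subgroup $G$ would be superfluous. The subgroup appears in the statement of \cite{MR1601917} precisely because the free-position technique only reaches projections assembled from $B_{1}$ and $B_{2}$, so the lemma must be restricted to those. Second, even for such special projections, the ``repositioning'' you describe is not justified: conjugating $Q$ by a unitary of $M_{N}(B)$ does not create freeness from $P$. The standard device requires a Haar unitary $u$ with $\{u\}$ free from $C^{*}(P,Q)$, and the Avitzour condition supplies no such $u$ --- words like $u_{1}u_{2}$ are Haar unitaries but need not be free from $B_{1}\cup B_{2}$ (for trace-zero symmetries $u_{1},u_{2}$ one computes $\tr\bigl((u_{1}u_{2})u_{1}(u_{1}u_{2})u_{1}\bigr)=\tr(u_{2}^{2})=1\neq 0$). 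Worse, freeness does not survive matrix amplification: inside $M_{N}(B)\cong M_{N}(\C)\otimes B$, the subalgebras $M_{N}(B_{1})$ and $M_{N}(B_{2})$ are free only with amalgamation over $M_{N}(\C)$, so the scalar analysis of a free pair of projections (which is otherwise sound: for a genuinely free pair with $\tr(Q)<\tr(P)$, the operator $QPQ$ is invertible in the corner and the polar part of $PQ$ implements $Q\precsim P$ inside $C^{*}(P,Q)$) does not apply to your $P$ and $Q$ as they stand. Bridging exactly these two points is the technical content of \cite{MR1601917}; since that is the very theorem being proved, your appeal to it for the decisive step is circular. In sum: you have the correct easy half and a correct reduction, but the hard half has been reduced to its own unproved core rather than established.
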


A tracial $C^{*}$-algebra $(B, \tr)$ contains a diffuse abelian $C^{*}$-subalgebra $\cC = C(X)$ if the Borel measure on $X$ induced from $\tr$ contains no atoms. 
Dykema was able to recover the results of Theorems \ref{thm:Avitzour} and \ref{thm:AvitzourSR} under a slightly different condition on the free product.
\begin{thm}[\cite{MR1473439}] \label{thm:diffuse}
\mbox{}
\be
\item 
Suppose $(B, \tr) =  (B_{1}, \tr_{1}) \Asterisk (B_{2}, \tr_{2})$ where $(B_{1}, \tr_{1})$ contains a diffuse abelian $C^{*}$-subalgebra and $B_{2} \neq \C$.  
Then $B$ is simple, has stable rank 1, and has unique tracial state $\tr$.

\item 
Suppose $(B, \tr) = (B_{1}, \tr_{1}) \Asterisk \cdots \Asterisk (B_{n}, \tr_{n})$, where each $B_{i}$ has the form
$$
B_{i} = \underset{\mu_{i}}{\cD} \oplus \C
$$
where $\cD$ contains a diffuse abelian subalgebra.  
Then $B$ is simple and has unique trace if and only if $\sum_{i=1}^{n}\mu_{i} > \tr(1)$.  
$B$ always has stable rank 1, regardless of the weighting.
\ee
\end{thm}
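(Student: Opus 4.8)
The plan is to establish part~(1) first and then bootstrap to part~(2). Since $(B_1,\tr_1)$ contains a diffuse abelian $C^*$-subalgebra $\cC\cong C(X)$ on which the trace induces a non-atomic measure, $\cC$ carries a Haar unitary $u$ (a unitary with $\tr(u^m)=0$ for all $m\neq 0$), and in fact, being diffuse, it supplies an abundance of such unitaries supported on disjoint spectral pieces. The backbone of the simplicity and unique-trace statements is a Powers--Avitzour--Dixmier averaging argument. Writing $E$ for the canonical $\tr$-preserving conditional expectation onto $\cC$, I would show that for every reduced word $w$ of positive length in the algebraic free product there is a sequence of unitaries $(v_k)$, manufactured by interleaving spectral projections of $\cC$ with a fixed centered element $0\neq b\in B_2$ (such a $b$ with $E_{B_2}(b)=0$ exists precisely because $B_2\neq\C$), so that $\bigl\|\tfrac1N\sum_{k=1}^N v_k\,w\,v_k^*\bigr\|\to 0$. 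The estimate is of Haagerup/Kesten type: the conjugates $v_k w v_k^*$ are long reduced words that are asymptotically orthogonal, so the operator norm of their average decays like $N^{-1/2}$. Given this, any tracial state $\sigma$ on $B$ satisfies $\sigma(w)=\sigma\bigl(\tfrac1N\sum v_k w v_k^*\bigr)\to 0=\tr(w)$, forcing $\sigma=\tr$ on a dense set; and for a nonzero positive $x$ in a two-sided ideal the same averages converge in norm to $\tr(x)\cdot 1$, placing $1$ in the ideal and proving simplicity.

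For stable rank~1 I would follow the strategy behind Theorem~\ref{thm:AvitzourSR}: show that an arbitrary $x\in B$ is a norm-limit of invertibles. After polar decomposition it suffices to perturb $|x|$ off the origin, i.e.\ to show the spectral distribution of $|x|$ can be pushed away from $0$ by an arbitrarily small correction. The diffuseness of $\cC$ guarantees the ambient algebra has no minimal projections obstructing this, so no atom at $0$ survives and a suitable unitary adjustment of $x+\e$ becomes invertible for small $\e$. Since $B$ is finite (it carries a faithful trace), this yields density of the invertibles.

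For part~(2), each $B_i=\underset{\mu_i}{\cD}\oplus\C$ carries a projection $p_i=1_{\cD}$ with $\tr(p_i)=\mu_i$ and complement $q_i=1-p_i$ with $\tr(q_i)=\tr(1)-\mu_i$. The hypothesis $\sum_i\mu_i>\tr(1)$ is equivalent to $\sum_i\tr(q_i)<(n-1)\tr(1)$, which, by the free-convolution computation of the spectral distribution of $q_1+\cdots+q_n$ (the $q_i$ living in distinct free factors), says the complementary projections cannot fill up the identity, so the join of the $p_i$ is $1$ with no leftover atom. Under this hypothesis I would exhibit a diffuse abelian subalgebra inside a corner $pBp$ whose free complement is nontrivial, reducing to part~(1) to obtain simplicity, unique trace, and stable rank~1. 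When $\sum_i\mu_i\leq\tr(1)$ the same computation produces a surviving atom in the distribution, hence a central projection (and a second trace), so $B$ is neither simple nor uniquely traced; this gives the converse. Stable rank~1 holds in all cases because the decomposition of $B$ splits off at most finite-dimensional or matricial summands (which are stable rank~1) from a diffuse summand handled as in part~(1).

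The main obstacle is the operator-norm averaging estimate in part~(1): unlike Avitzour's setting we are given only $B_2\neq\C$ rather than two trace-orthogonal unitaries in $B_2$, so the averaging family must be built entirely from the diffuse abelian subalgebra of $B_1$ together with a single centered element of $B_2$, and proving the $N^{-1/2}$ decay for reduced words of every possible boundary type is the delicate point. Secondarily, the sharp threshold $\sum\mu_i>\tr(1)$ in part~(2)---in particular the converse construction of the atom---requires the explicit free-convolution computation of the distribution of $\sum q_i$.
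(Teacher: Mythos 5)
This theorem is not proved in the paper at all: it is imported verbatim from Dykema \cite{MR1473439} (with the stable rank portion ultimately resting on Dykema--Haagerup--R{\o}rdam \cite{MR1478545}), so your proposal must be measured against Dykema's proof rather than anything internal to this article. Measured that way, the proposal is a plan with the hard parts left open, and two of its steps would actually fail. For part (1), you correctly identify the crux --- Avitzour's hypothesis gives trace-orthogonal unitaries in $B_2$, while here $B_2 \neq \C$ only yields a centered, generally non-unitary element $b$ --- but you do not supply the estimate that replaces it, and the naive averaging family cannot work: any unitary taken from $B_1$ (in particular any power of a Haar unitary $u$ in the diffuse abelian subalgebra $\cC$) conjugates $B_1$ into itself and fixes every element of $\cC$ pointwise, so words lying in $\cC$ with zero trace are untouched by such conjugation. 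The averaging unitaries must therefore genuinely involve $B_2$, where you have no unitary to work with (conjugation by non-unitaries is not trace-preserving), and constructing such unitaries in the free product together with the Leinert-type $N^{-1/2}$ decay for all boundary types is precisely the content of Dykema's argument, which in fact proceeds through his structure theory and compression results for free products rather than a bare Powers argument. Flagging this as ``the delicate point'' leaves the theorem unproved.

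The stable rank claim is worse than incomplete: its reasoning pattern is false. Absence of minimal projections (or of an atom at $0$) does not imply density of invertibles --- $C([0,1]^2)$ carries a faithful diffuse trace, has no nontrivial projections, and has stable rank $2$ --- and an arbitrary element of a $C^*$-algebra admits no polar decomposition inside the algebra, so the reduction to ``perturbing $|x|$ off the origin'' is not available. The actual proof in \cite{MR1478545} is a long combinatorial comparison of $\|\cdot\|$ and $\|\cdot\|_2$ on spans of reduced words plus a spectral radius estimate (this paper reproduces exactly that machinery, adapted to amalgamation over $\C^2$, in Theorem \ref{thm:Amalg.sr1}); nothing in your sketch substitutes for it. Finally, your converse in part (2) only covers strict inequality: when $\sum_i \mu_i < \tr(1)$ the free projections $q_i = 0 \oplus 1$ indeed have a nonzero infimum supporting a character, but at the boundary $\sum_i \mu_i = \tr(1)$ the would-be atom has trace zero, and since the free-product trace on the \emph{reduced} free product is faithful there is no nonzero central projection at all; yet the theorem asserts failure of simplicity and unique trace there too. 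The obstruction in that case is a boundary character of continuous-field type (as in $C^*_r(\Z/2\Z \ast \Z/2\Z)$, a direct integral of $M_2$'s with diagonal boundary conditions), not an atom of the trace, and the same phenomenon refutes the claim that $B$ ``splits off'' finite-dimensional summands in the non-simple cases.
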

Finally, we finish this section with a technical tool for compressions of free products.  
Also see \cite[Theorem 1.2]{MR1201693} for a von Neumann algebra version.
\begin{lem}[\cite{MR1201693,MR1473439}] \label{lem:compressfree}
Suppose that $A$, $B$, and $C$ are tracial $C^{*}$-algebras with
$$
\mathcal{D} = (\overset{p}{A} \oplus B) {\Asterisk} C,
$$
and $\mathcal{D}$ is endowed with the canonical free product trace.  Then
$
p\mathcal{D}p = A {\Asterisk} p\left((\overset{p}{\C} \oplus B) {\Asterisk} C\right)p.
$
\end{lem}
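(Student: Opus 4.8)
The plan is to write $p=1_A$ and $M=A\oplus B$, and to let $\cN\subseteq\cD$ be the subalgebra generated by $\C p\oplus B$ and $C$, so that $\cE:=p\cN p$ is exactly the algebra appearing on the right-hand side. Since $\C p\oplus B$ and $C$ are unital subalgebras of the free subalgebras $M$ and $C$ of $\cD$, the standard fact that reduced free products of subalgebras sit inside the ambient free product identifies $\cN$ with $(\C p\oplus B)\Asterisk C$ carrying the restricted trace, and since $p\in\cN$ both $A=pAp$ and $\cE=p\cN p$ are unital subalgebras of $p\cD p$ sharing the unit $p$. Because $\tau$ is faithful, so is the normalized trace $\tau_p=\tau(\,\cdot\,)/\tau(p)$ on the corner $p\cD p$; hence once I show that $A$ and $\cE$ (i) generate $p\cD p$ and (ii) are free with respect to $\tau_p$, the decomposition of the GNS space supplied by freeness identifies $p\cD p$ with the reduced free product $A\Asterisk\cE$, which is the assertion.

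Generation is the easy half. A dense subspace of $p\cD p$ is spanned by words $pwp$ with $w$ alternating in $M$ and $C$. Splitting each $M$-letter into its $A$- and $B$-components via $M=A\oplus B$, and using $a=pap$ for $a\in A$ to supply projections $p$ on either side of each maximal subword containing no nontrivial $A$-letter, one rewrites such a $pwp$ as an honest product whose factors are alternately the $A$-letters $a=pap\in A$ and the compressions $pup\in p\cN p=\cE$ of subwords $u$ built only from $B$ and $C$ (these lie in $\cN$). Thus $A$ and $\cE$ generate $p\cD p$.

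The substance is freeness. The plan is to take an alternating product $y_1\cdots y_m$ whose letters are $\tau$-centered and lie alternately in $A$ and $\cE$, and to prove $\tau(y_1\cdots y_m)=0$ by reducing to the freeness of $M$ and $C$ in $\cD$. Each centered $\cE$-letter is, by the free structure of $\cN=(\C p\oplus B)\Asterisk C$, a limit of $\tau_p$-centered combinations of $p$ and compressions $p\,(z_1c_1z_2\cdots)\,p$ of reduced words with the $z_i$ centered in $\C p\oplus B$ and the $c_i$ centered in $C$. Substituting and expanding, one must evaluate $\tau$ on long words alternating in $M$ and $C$. The only places where two $M$-letters can become adjacent are where an $A$-letter abuts such a compressed $\cN$-word; there the orthogonality relations $pa=ap=a$, $pb=bp=0$, and $ab=ba=0$ for $a\in A$, $b\in B$ let me multiply the neighbouring $M$-letters inside $M$ and re-center them. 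After this reduction every surviving monomial is a reduced alternating word in the centered parts $M^{\circ}$ and $C^{\circ}$, hence has trace $0$ by freeness of $M$ and $C$, while the correction terms produced by re-centering have strictly shorter alternating length and are killed by induction on $m$.

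The main obstacle is precisely the bookkeeping in this last step: tracking the re-centering corrections when adjacent $M$-letters merge, verifying that the leftover monomials are genuinely reduced so that the freeness of $M$ and $C$ applies, and organizing the whole argument as a clean induction on the number of letters. This is the technical heart of the computation carried out in \cite{MR1201693}, whose von Neumann algebra version is the cited Theorem~1.2; the $C^*$-algebraic packaging needed here follows the same lines together with the free-product techniques of \cite{MR1473439}.
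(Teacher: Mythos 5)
Your overall strategy is sound, and it is essentially the argument of Dykema that the paper itself relies on: the paper gives no proof of this lemma at all (it is quoted from \cite{MR1201693,MR1473439}), so your reduction to (i) generation and (ii) freeness of $A$ and $\cE=p\cN p$ with respect to $\tau_p$, followed by the GNS identification, is exactly what must be done, and your generation argument (splitting $M$-letters and using $pa=ap=a$, $pb=bp=0$, $ab=ba=0$ to regroup) is complete and correct. The useful comparison is with the proof the paper \emph{does} write out for its amalgamated generalization, Lemma \ref{lem:compressfree2}, which follows the same insert--distribute--regroup scheme but with a refinement that makes the bookkeeping you defer to \cite{MR1201693} evaporate. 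You expand a centered $\cE$-letter as a centered combination of $p$ and compressions $pwp$; the terms along $p$ are what create the merges $x_i\,p\,x_{i+1}=x_ix_{i+1}$, the re-centering corrections, and the induction on length. But the component along $p$ can be dispensed with entirely: whenever the alternating word $w$ in $(\C p\oplus B)^{\circ}$ and $C^{\circ}$ contains at least one $C^{\circ}$-letter, the compression $pwp$ is \emph{already} $\tau$-centered (peel the outer $(\C p\oplus B)^{\circ}$-letters using $pz=zp\in\C p$, then finish by freeness of $M$ and $C$), while $pwp\in\C p$ when it contains none; hence every centered element of $\cE$ is a limit of combinations of such $pwp$ with no multiple of $p$ at all. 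After substituting these, the only merges are $xz$ or $zx$ with $x\in A^{\circ}$ and $z=\lambda p+b\in(\C p\oplus B)^{\circ}$, and $xz=\lambda x=zx$ is automatically centered because $xb=bx=0$; so every regrouped monomial is a genuinely alternating word in $M^{\circ}$ and $C^{\circ}$ and dies by freeness in one stroke---no correction terms, no induction. This is precisely the role of the sentence ``we may assume that the coefficient of the single word $a$ is zero'' in the paper's proof of Lemma \ref{lem:compressfree2}. (One loose end in your sketch either way: alternating words beginning or ending with an $\cE$-letter leave a boundary $p$ that is not absorbed; traciality of $\tau$ together with $p=p^{\circ}+\tau(p)1$ handles this.) So: correct approach and no step that fails, but the part you outsource to the references can be done directly with the refinement above.
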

\subsection{Morita equivalence of $\cA_{0}$ and $\cS(\Gamma)$}\label{sec:MoritaGJS}

We will now prove the following theorem.

\begin{thm}\label{thm:RMEGraph}
$\cA_{0} \sim \cS(\Gamma)$.
\end{thm}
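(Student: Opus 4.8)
The plan is to exhibit both $\cA_0$ and $\cS(\Gamma)$ as full hereditary subalgebras of the single $C^*$-algebra $C=\SP{}=\cA_\I$, and then invoke Definition \ref{defn:StrongRME} directly. By Corollary \ref{cor:ZerothGJS} we have $\cA_0=1_0\SP{}1_0=p_\star\SP{}p_\star$ (where $p_\star=1_0$ is the minimal projection of $\cC$ at the trivial vertex), while $\cS(\Gamma)=\overline{\cC\SP{}\cC}$ is by construction the hereditary subalgebra of $\SP{}$ generated by $\cC$. Both are therefore hereditary, so the entire content is \emph{fullness}. Since $\SP{}\cong\cS(\Gamma)\otimes\cK$ by Part I \cite{CStarFromPAs}, the subalgebra $\cS(\Gamma)$ is already a full corner of $\SP{}$; moreover $\cS(\Gamma)=\overline{\cC\SP{}\cC}\subseteq\overline{\SP{}\cC\SP{}}$, so the closed ideal generated by $\cC$ is all of $\SP{}$. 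Because $p_\star\cC=\C p_\star$, we have the cleaner identification $\cA_0=p_\star\cS(\Gamma)p_\star$, a corner of $\cS(\Gamma)$; hence it suffices to prove that $p_\star$ is a \textbf{full} projection in $\cS(\Gamma)$, after which $\cA_0\sim\cS(\Gamma)$ follows from Definition \ref{defn:StrongRME} (equivalently from Facts \ref{thm:StableIsoME} via Brown's stable-isomorphism theorem).

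To prove $p_\star$ is full in $\cS(\Gamma)$, let $I=\overline{\cS(\Gamma)p_\star\cS(\Gamma)}$ and show $p_\alpha\in I$ for every $\alpha\in V(\Gamma)$; connectivity of the principal graph $\Gamma$ then lets me argue by induction on the distance from $\star$. For a step from $\alpha$ to an adjacent $\beta$ I use the edge elements of Facts \ref{facts:edge}. When $\mu(\beta)\le\mu(\alpha)$, the element $g_{\e'}\in p_\alpha\cX p_\beta$ has $g_{\e'}^*g_{\e'}$ invertible in the smaller corner $p_\beta\SP{}p_\beta$ (Facts \ref{facts:edge}(5)), so $p_\beta\in I$ immediately whenever $p_\alpha\in I$. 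The delicate direction is passing from a smaller vertex to a larger one: if $\mu(\beta)>\mu(\alpha)$, then $g_{\e'}=p_\beta g_{\e'}p_\alpha\in\cS(\Gamma)p_\alpha\subseteq I$, but $g_{\e'}g_{\e'}^*\in p_\beta\cS(\Gamma)p_\beta$ has nontrivial kernel (its distribution carries an atom at $0$ of size $\mu(\beta)-\mu(\alpha)$), so a single edge only deposits a \emph{proper} subprojection of $p_\beta$ into $I$.

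The main obstacle is exactly this \textbf{rectangular obstruction}: transporting $p_\star$ outward along the graph captures only subprojections of the larger vertices, and at the $C^*$-level the supremum of the resulting projections need not lie in the ideal $I$. I expect to resolve it by proving that each corner $p_\alpha\cS(\Gamma)p_\alpha$ is \emph{simple}. Using Lemma \ref{lem:compressfree} to compress the amalgamated free product $\cS(\Gamma)$ to the vertex $\alpha$, I realize $p_\alpha\cS(\Gamma)p_\alpha$ as a reduced free product over $\C$ containing a diffuse abelian subalgebra — produced by the atomless Free-Poisson distributions of the loop elements $g_\e^2$ and the returns $g_{\e'}^*g_{\e'}$ in Facts \ref{facts:edge}(4)--(6) — so Dykema's Theorem \ref{thm:diffuse}(1) applies and yields simplicity of the corner. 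Granting this, the nonzero subprojection of $p_\alpha$ deposited into $I$ from a neighbor already lying in $I$ generates the whole simple corner $p_\alpha\cS(\Gamma)p_\alpha$, forcing $p_\alpha\in I$. Connectivity of $\Gamma$ then propagates membership from $p_\star$ to every $p_\alpha$, giving $\cC\subseteq I$ and hence $I=\cS(\Gamma)$.

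Assembling the pieces: $\cA_0=p_\star\cS(\Gamma)p_\star$ is a full corner of $\cS(\Gamma)$, and $\cS(\Gamma)=\overline{\cC\SP{}\cC}$ is a full hereditary subalgebra of $\SP{}$, so both $\cA_0$ and $\cS(\Gamma)$ are full hereditary subalgebras of $C=\SP{}$. By Definition \ref{defn:StrongRME} this gives $\cA_0\sim\cS(\Gamma)$, completing the proof. I anticipate the corner-simplicity step to be the technical heart of the argument, since it is precisely where the free-probabilistic input of Theorem \ref{thm:diffuse} enters and where the quantum-dimension inequality $\delta>1$ (ensuring the relevant distributions are diffuse) is used.
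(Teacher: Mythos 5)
Your proposal follows the same skeleton as the paper's proof up to its technical heart: reduce to showing $p_\star$ is full in $\cS(\Gamma)$ (since $\cA_0 = p_\star\cS(\Gamma)p_\star$), transport $p_\star$ along edges of $\Gamma$ via the elements $g_{\e'}$, and invoke a Dykema-type simplicity result to upgrade the captured subprojection of $p_\alpha$ to all of $p_\alpha$. The gap is in how you obtain that simplicity. You claim that Lemma \ref{lem:compressfree} realizes $p_\alpha\cS(\Gamma)p_\alpha$ as a reduced free product over $\C$, so that Theorem \ref{thm:diffuse}(1) gives simplicity of the \emph{whole corner}. But Lemma \ref{lem:compressfree} applies to a \emph{scalar} free product $(\overset{p}{A}\oplus B)\Asterisk C$, whereas $\cS(\Gamma)$ is an \emph{amalgamated} free product over $C_0(V(\Gamma))$: every factor $\cS_\e$ contains the full multidimensional amalgam, so $\cS(\Gamma)$ admits no decomposition of the form the lemma requires, and compressing by a vertex projection does not reduce the amalgam to scalars. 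Indeed, the desired conclusion is false in general: precisely when $\Gamma$ satisfies Assumption \ref{assume:Cycle} (an edge $\e$ from $\star$ to $\alpha$ together with a second path from $\star$ to $\alpha$ avoiding $\e$), the paper points out that the compression can \emph{not} be written as a free product over the scalars, and establishing simplicity there is the whole point of the $\C^2$-amalgamated machinery of Lemma \ref{lem:compressfree2}, Lemma \ref{lem:simpleamalgamated}, and Theorem \ref{thm:SimpleTrace2}. Worse, that later simplicity argument itself uses the fullness of $p_\star$ --- i.e., Theorem \ref{thm:RMEGraph} --- so appealing to corner simplicity here would be circular within the paper's logical structure. (Your ``downhill'' step, where $g_{\e'}^*g_{\e'}$ is invertible in the lighter corner by Facts \ref{facts:edge}(5), is correct; the problem is only the uphill/equal-weight steps that you route through corner simplicity.)

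The repair is the paper's actual argument: one never needs the corner to be simple, only a simple \emph{unital subalgebra} of it containing the transported element. Let $\e_1,\dots,\e_n$ be all edges of $\vec{\Gamma}$ with target $\alpha$, and consider the unital $C^*$-subalgebras $C^*(g_{\e_i}^*g_{\e_i}, p_\alpha)\subset p_\alpha\cS(\Gamma)p_\alpha$. By Facts \ref{facts:edge}(3)--(6) these are free over $\C p_\alpha$ and each is of the form $C[0,1]\oplus\C$, where the diffuse summand carries weight $\min\{\mu(s(\e_i)),\mu(\alpha)\}$. Their scalar free product is a unital subalgebra of the corner with unit $p_\alpha$, and it is simple by Theorem \ref{thm:diffuse} because the Frobenius--Perron condition (with $\delta>1$) gives $\sum_{i=1}^n \min\{\mu(s(\e_i)),\mu(\alpha)\}\geq\mu(\alpha)$. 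Since the transported element $g^*p_\beta g = g^*g$ (for $\beta$ a previously captured neighbor and $g$ the corresponding edge element) is a nonzero positive element of this simple subalgebra lying in the ideal generated by $p_\star$, the ideal meets the subalgebra nontrivially and hence contains its unit $p_\alpha$. This handles uphill, downhill, and equal-weight steps uniformly, resolves your ``rectangular obstruction'' without corner simplicity, and the induction along the connected graph $\Gamma$ then finishes the proof exactly as you outline.
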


\begin{proof}
We note that $\cA_{0} = p_{\star}\cS(\Gamma)p_{\star}$ for $\mu$ the quantum dimension weighting, so we need to show that $p_{\star}$ is full in $\cS(\Gamma)$.  Let $\alpha$ be a vertex connected to $\star$ by an edge, $\e$, and let $\e' \in E(\vec{\Gamma})$ have $s(\e') = \star$ and $t(\e') = \alpha$ so that $g_{\e'}^{*}g_{\e'} = g_{\e'}^{*}p_{\star}g_{\e'}$ is supported under $p_{\alpha}$.  Let $\e_{1}, \dots, \e_{n}$ be the collection of edges having target $\alpha$.  Then by Facts \ref{facts:edge}, $p_{\alpha}\cS(\Gamma)p_{\alpha}$ contains the following free product
$$
\underset{i=1}{\overset{n}{\Asterisk}} \underset{\min\{\mu(s(\e_{i})), \mu(\alpha)\}}{C[0, 1]} \oplus \C
$$
where the trace on $C[0, 1]$ is integration against Lebesgue measure.  
The Frobenius-Perron condition implies
$$
\sum_{i=1}^{n} \min\{\mu(s(\e_{i})), \mu(\alpha)\} \geq \mu(\alpha)
$$
so that this free product is simple by Theorem \ref{thm:diffuse}.  This shows that $p_{\alpha}$ is in the ideal generated by $p_{\star}$.  Continuing inductively shows that $p_\beta$ is in the ideal generated by $\star$ for every $\beta\in V(\Gamma)$.
\end{proof}

From this, we deduce the following three corollaries.

\begin{cor}
$\cA_{\I} \cong \cA_{0} \otimes \cK$ for $\cK$ the algebra of compact operators on a separable, infinite dimensional Hilbert space.
\end{cor}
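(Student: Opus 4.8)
The plan is to assemble this corollary from three ingredients already in hand, stringing them into a single chain of isomorphisms. The guiding observation is that $\cA_\I$ has two faces: on one hand it is the full semifinite algebra $\SP{}$, and on the other hand it is known from Part I to be a stabilization of the graph algebra $\cS(\Gamma)$. So if I can also stabilize $\cA_0$ into $\cS(\Gamma)\otimes\cK$, I will be done by transitivity.

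First I would invoke Lemma \ref{lem:AInftyIso} to identify $\cA_\I \cong \SP{}$, so that everything may be phrased in terms of the semifinite algebra. Next I would recall the structural fact from Subsection \ref{sec:compression} (established in \cite{CStarFromPAs}) that $\SP{} = \cS(\Gamma)\otimes\cK$, where $\cK$ denotes the compacts on a separable, infinite-dimensional Hilbert space. Finally, I would apply Theorem \ref{thm:RMEGraph}, which gives $\cA_0 \sim \cS(\Gamma)$, together with Facts \ref{thm:StableIsoME}(1): for separable $C^*$-algebras, strong Morita equivalence is exactly stable isomorphism, so $\cA_0\otimes\cK \cong \cS(\Gamma)\otimes\cK$. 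Concatenating these three relations yields
$$
\cA_\I \cong \SP{} = \cS(\Gamma)\otimes\cK \cong \cA_0\otimes\cK,
$$
which is precisely the claim.

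I do not expect a genuine obstacle here, since each link in the chain is quoted directly; the proof is essentially a bookkeeping exercise. The only points requiring care are hypothesis-checking rather than new argument: I must confirm that the separability running assumption of Subsection \ref{sec:Morita} is in force so that Facts \ref{thm:StableIsoME}(1) (i.e.\ the Brown--Green--Rieffel stable isomorphism theorem) legitimately applies, and I must make sure the stabilizing algebra $\cK$ appearing in $\SP{} = \cS(\Gamma)\otimes\cK$ is the \emph{same} $\cK$ (compacts on a separable infinite-dimensional Hilbert space) as the one produced by the stable isomorphism, so that the two occurrences of $\cS(\Gamma)\otimes\cK$ may be identified without ambiguity. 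Both checks are routine given the conventions already fixed.
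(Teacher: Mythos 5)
Your proposal is correct and is essentially the paper's own argument: the paper likewise combines the isomorphism $\cA_{\I} \cong \cS(\Gamma)\otimes\cK$ from \cite{CStarFromPAs} with Theorem \ref{thm:RMEGraph} and Facts \ref{thm:StableIsoME} (Brown--Green--Rieffel), the only cosmetic difference being that you spell out the intermediate identification $\cA_\I \cong \SP{}$ from Lemma \ref{lem:AInftyIso}, which the paper has already fixed as a standing convention. Your hypothesis checks (separability, sameness of $\cK$) are fine and require no further argument.
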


\begin{proof}
The isomorphism $\cA_{\I} \cong \cS(\Gamma) \otimes \cK$ was proven in \cite{CStarFromPAs}.  
The corollary is then immediate from Facts \ref{thm:StableIsoME} and Theorem \ref{thm:RMEGraph}. 
\end{proof}

\begin{cor}\label{cor:allME}
$\cA_{0} \sim \cA_{k}$ for all $k \geq 0$.
\end{cor}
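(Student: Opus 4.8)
The plan is to present every $\cA_k$ as a full hereditary subalgebra of the semifinite algebra $\cA_\I$ and then invoke transitivity of strong Morita equivalence. Recall first that $\sim$ is an equivalence relation: by Facts \ref{thm:StableIsoME}(1) it coincides with stable isomorphism, which is reflexive, symmetric and transitive. Moreover, since $\cK\otimes\cK\cong\cK$, every separable $A$ satisfies $A\otimes\cK\cong(A\otimes\cK)\otimes\cK$, so Facts \ref{thm:StableIsoME}(1) gives $A\sim A\otimes\cK$. Feeding in the previous corollary $\cA_\I\cong\cA_0\otimes\cK$ yields $\cA_0\sim\cA_0\otimes\cK\cong\cA_\I$, so $\cA_0\sim\cA_\I$ with no further work.

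Next I would realize each $\cA_k$ as a corner of $\cA_\I$. By Corollary \ref{cor:ZerothGJS}, $\cA_k\cong 1_k\cA_\I 1_k$, which is a hereditary subalgebra of $\cA_\I$. Taking $C=\cA_\I$ in Definition \ref{defn:StrongRME}, the algebra $\cA_\I$ is trivially a full hereditary subalgebra of itself, so to conclude $\cA_k\sim\cA_\I$ it suffices to show that the projection $1_k$ is \emph{full} in $\cA_\I$, i.e.\ that the closed two-sided ideal it generates is all of $\cA_\I$. Granting this for every $k\geq 0$, transitivity gives $\cA_0\sim\cA_\I\sim\cA_k$, which is the assertion. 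Thus the entire problem reduces to fullness of $1_k$.

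For that step I would argue exactly as in the proof of Theorem \ref{thm:RMEGraph}. Let $I$ be the ideal generated by $1_k$. The projection $1_k$ is the identity of the finite-dimensional $C^*$-algebra $\cP_{2k}$ (with its planar multiplication), whose simple summands are governed by the Bratteli diagram encoded in $\Gamma$; in particular $1_k$ dominates a nonzero projection that is Murray--von Neumann equivalent in $\cA_\I$ to one of the chosen vertex projections $p_\alpha$, so $p_\alpha\in I$. Now the connectivity argument of Theorem \ref{thm:RMEGraph} applies verbatim: for each edge $\e$ of $\Gamma$ joining $\alpha$ to a neighbour $\beta$, the edge element $g_{\e'}$ together with the Frobenius--Perron condition and Facts \ref{facts:edge} force $p_\beta$ into the ideal generated by $p_\alpha$, and since $\Gamma$ is connected this propagates to every vertex, giving $\{p_\beta:\beta\in V(\Gamma)\}\subset I$. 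Finally these vertex projections generate $\cA_\I$ as an ideal: every generator $x\in\cP_{l,n,r}$ satisfies $x=1_l x 1_r$, and each $1_m$ lies in the ideal generated by the $p_\beta$ by the same Murray--von Neumann argument, whence $I=\cA_\I$ and $1_k$ is full. (Note that for $k=0$ this is immediate, since under $\cA_\I\cong\cA_0\otimes\cK$ the projection $1_0=p_\star$ corresponds to $1_{\cA_0}\otimes e$ for a rank-one $e$, which is manifestly full.)

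The hard part is the very first move of the last paragraph: capturing a single vertex projection $p_\alpha$ inside the ideal generated by $1_k$. The subtlety is that in this compression picture the corners $1_k\cA_\I 1_k$ are pairwise \emph{orthogonal}, so $1_k$ does not literally dominate the chosen $p_\alpha$ and one must track how these corners sit inside $\cA_\I$ relative to the $p_\alpha$ (equivalently, connect the corner $1_k\cA_\I 1_k$ to $1_{k-1}\cA_\I 1_{k-1}$, which is what the invertible cabled cup elements of Fact \ref{fact:CupInvertible} accomplish in the spirit of Theorem \ref{thm:FiniteDepth}, now without any finite-depth hypothesis). Once one such $p_\alpha$ is secured, the connectivity and Frobenius--Perron machinery already assembled for Theorem \ref{thm:RMEGraph} finishes the proof. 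I remark that if the simplicity of $\cA_0$ from Theorem \ref{thm:Properties} were available here, then $\cS(\Gamma)$, and hence $\cA_\I$, would be simple by Proposition \ref{prop:Simple}, making every nonzero projection automatically full; but since that result is established later, I give the direct argument above to avoid circularity.
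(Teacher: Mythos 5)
Your proposal is correct and is essentially the paper's first proof of this corollary: the paper likewise observes that $1_k$ decomposes into projections each Murray--von Neumann equivalent to a vertex projection $p_\alpha$ with $\alpha\in V(\Gamma(k))$, and then invokes Theorem \ref{thm:RMEGraph}, whose Frobenius--Perron/connectivity argument makes every $p_\alpha$ (hence $1_k$) full in $\cA_\I$, so that $\cA_0$ and $\cA_k$ are both full hereditary subalgebras of $C=\cA_\I$ as in Definition \ref{defn:StrongRME}. (The paper also records a second proof that secures fullness using only the Temperley--Lieb planar subalgebra and the Jones--Wenzl projections $\jw{j}$.)

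One correction to your final paragraph: the step you flag as ``the hard part'' is not hard, and the cabled cups of Fact \ref{fact:CupInvertible} are neither needed nor the right tool --- they belong to the graded picture and relate $\cA_k$ to $\cA_{k+1}$, not the mutually orthogonal corners $1_k\cA_\I 1_k$. The Murray--von Neumann equivalences you invoke in your third paragraph are implemented inside the AF subalgebra $\cB$ itself: for a vertex $\alpha$ of depth $j\leq k$ appearing in the decomposition of $\cP_{2k}$, the space $\cP_{k,j}\subset\cB$ is exactly the space of intertwiners, and any $v\in\cP_{k,j}$ with $v^\dagger v=p_\alpha$ and $vv^\dagger$ a minimal subprojection of $1_k$ exhibits the required equivalence; moreover, for $\alpha$ of depth exactly $k$ the chosen representative $p_\alpha\in\cP_{2k}\subset 1_k\cB 1_k$ is \emph{literally} dominated by $1_k$, since that is how the representatives were chosen in Subsection \ref{sec:compression}. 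So your third paragraph already stands as written, no finite-depth-style cup argument is required, and your observation about avoiding circularity with the later simplicity results is well taken.
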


We provide two proofs of this corollary.

\begin{proof}[Proof 1 of Corollary \ref{cor:allME}]
Note that $1_{k}$ is a linear combination of projections equivalent to $p_{\alpha}$ for $\alpha \in V(\Gamma(k))$.  
The result is now immediate from Theorem \ref{thm:RMEGraph}.
\end{proof}

\begin{cor}
$K_{0}(\cA_{k}) = \Z\set{[p_{\alpha}]}{\alpha \in V(\Gamma)}$ and $K_{1}(\cA_{k}) =\{0\}$ for all $k \geq 0$.
\end{cor}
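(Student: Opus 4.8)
The plan is to read the statement off from the strong Morita equivalences already in hand, using that strong Morita equivalence preserves $K$-theory. First I would observe that strong Morita equivalence is transitive, since by Facts \ref{thm:StableIsoME}(1) it coincides with stable isomorphism. Chaining $\cA_0 \sim \cS(\Gamma)$ from Theorem \ref{thm:RMEGraph} with $\cA_0 \sim \cA_k$ from Corollary \ref{cor:allME} therefore gives $\cA_k \sim \cS(\Gamma)$ for every $k \geq 0$. Facts \ref{thm:StableIsoME}(2) then yields group isomorphisms $K_*(\cA_k) \cong K_*(\cS(\Gamma))$.

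Next I would feed the isomorphism $\cS(\Gamma) \cong \cS(\Gamma, \mu)$ recorded in Subsection \ref{sec:compression} into Fact \ref{fact:KTheory}, which computes $K_0(\cS(\Gamma,\mu)) \cong \bbZ[\set{[p_\alpha]}{\alpha \in V(\Gamma)}]$ and $K_1(\cS(\Gamma,\mu)) = (0)$. Combined with the previous paragraph this already gives the abstract group isomorphisms $K_0(\cA_k) \cong \bbZ[\set{[p_\alpha]}{\alpha \in V(\Gamma)}]$ and $K_1(\cA_k) = (0)$.

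The only point requiring care, and what I expect to be the main obstacle, is upgrading the abstract isomorphism to the asserted \emph{equality}: one must verify that the distinguished classes $[p_\alpha]$ in $K_0(\cA_k)$ are carried to the standard free generators of $K_0(\cS(\Gamma))$. The cleanest route is to pass through $\cA_\I = \SP{} \cong \cS(\Gamma) \otimes \cK$, whose $K_0$ is freely generated by the classes $[p_\alpha]$ as in Facts \ref{rem:WalkerMult}(1). Realizing $\cA_k$ as the corner $1_k \cA_\I 1_k$, I would check that $1_k$ is a full projection in $\cA_\I$ --- this follows from the arguments of Theorem \ref{thm:RMEGraph} and Corollary \ref{cor:allME}, since $1_k$ is a sum of projections equivalent to the $p_\alpha$, each of which generates $\cA_\I$ as an ideal. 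The inclusion of the full corner $1_k \cA_\I 1_k \hookrightarrow \cA_\I$ then induces an isomorphism $K_0(\cA_k) \xrightarrow{\cong} K_0(\cA_\I)$ sending the class of a projection to its class in $\cA_\I$, so the generators $[p_\alpha]$ match up and the stated equality holds.
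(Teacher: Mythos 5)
Your proposal is correct and follows essentially the same route as the paper, which states this corollary without proof precisely because it is immediate from Theorem \ref{thm:RMEGraph}, Corollary \ref{cor:allME}, Facts \ref{thm:StableIsoME}, and the $K$-theory computation for $\cS(\Gamma,\mu)$ in Fact \ref{fact:KTheory}. Your final paragraph identifying the generators via the full-corner inclusion $1_k\cA_\I 1_k\hookrightarrow\cA_\I$ is exactly the identification the paper leaves implicit (note that the $p_\alpha$ live in $\cA_\I$ rather than in $\cA_k$), so it is a welcome clarification rather than a departure.
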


\begin{rem} 
The first corollary is analogous to the fact that $\cM_{\I} \cong \cM_{0} \otimes \cB(H)$ for $H$ separable and infinite dimensional \cite{MR2807103}.
If $\cP_{\bullet, \pm}$ is a subfactor planar algebra, the second and third corollaries need to to be modified, since $\cA_{k, \pm}$ is a corner of $\cA_{\I, \mp}$ for $k$ odd.
In this case, we have $\cA_{k, \pm} \sim \cA_{k+1, \mp}$ for all $k$.  
For the $K$-groups, we have the following:
\begin{align*}
K_{0}(\cA_{k, \pm}) &= 
\begin{cases}
\Z\set{[p_{\alpha}]}{\alpha \in V(\Gamma_{\pm})} &\text{ for } k \text { even }
\\
\Z\set{[p_{\alpha}]}{\alpha \in V(\Gamma_{\mp})} &\text{ for } k \text { odd }
\end{cases}
K_{1}(\cA_{k, \pm}) &= \{0\} \text{ for all } k \geq 0
\end{align*}
\end{rem}

While the following is not needed for the rest of the article, it is interesting to note that it is actually sufficient to prove Corollary \ref{cor:allME} only using the $A_{n}$ Coxeter-Dynkin diagram ($n\in \{3, 4, \dots\} \cup \{\I\}$), which is the principal graph for Temperley-Lieb $\TL_\bullet(\delta)$ for $\delta\in \set{2\cos(\pi/(k)}{k\geq 3}\cup[2,\I)$.  
Indeed, since $\TL_{\bullet}$ sits inside every factor planar algebra, we have $\cS(A_{n}) \subset \cA_{\I}$.  
Let $\jw{k}$ be the $k^{\text{th}}$ Jones Wenzl idempotent in $\TL_{2k}$, so it is a representative of the depth $k$ vertex of $A_{n}$.  
We also set $g_{k} = g_{\e}$ if $\e$ has $s(\e) = k-1$ and $t(\e) = k$.

\begin{proof}[Proof 2 of Corollary \ref{cor:allME}]

It is sufficient to show that for all $k$, $\jw{k}$ is in the ideal (in $\cA_{\I}$) generated by $p_{\star} = \jw{0}$ since $1_{k}$ is a linear combination of projections equivalent to various $\jw{j}$ for $j \leq k$.  
Note that the element
$$
g_{k-1}\cdots g_{0}g_{0}^{*} \cdots g_{k-1}^{*} 
= 
g_{k-1}\cdots g_{0}p_{\star}g_{0}^{*} \cdots g_{k-1}^{*}
=
(\text{constant})\cdot\,
\begin{tikzpicture}[baseline = -.1cm]
	\draw (-1.6, 0)--(-.8, 0);
	\draw (1.6, 0)--(.8, 0);
	\draw (-.4,0) arc (-90:0:.2cm) -- (-.2,.7);
	\draw (.4,0) arc (270:180:.2cm) -- (.2,.7);
	\nbox{unshaded}{(-.8,0)}{.4}{0}{0}{$\jw{k}$}
	\nbox{unshaded}{(.8,0)}{.4}{0}{0}{$\jw{k}$}
	\node at (-.4, .6) {{\scriptsize{$k$}}};
	\node at (.4, .6) {{\scriptsize{$k$}}};
	\node at (-1.4, .2) {{\scriptsize{$k$}}};
	\node at (1.4, .2) {{\scriptsize{$k$}}};
\end{tikzpicture}
$$
is nonzero, in the ideal generated by $p_{\star}$, and in $\jw{k}\cS(A_{n})\jw{k} \subset \jw{k}\cA_{\I}\jw{k}$.  
The algebra $\jw{k}\cS(A_{n})\jw{k}$ is simple by the same arguments as in the proof of Theorem \ref{thm:RMEGraph}.  
The result follows.
\end{proof}


\section{Properties of the GJS $C^*$-algebras}\label{sec:properties}

We now turn our attention to deriving many properties about the $C^{*}$-algebras $\cA_{k}$.  
Specifically, we  show that these algebras are simple with unique tracial state.  
Furthermore we will show that the $K_{0}$ group of each $\cA_{k}$ is weakly unperforated as an ordered abelian group and that each $\cA_{k}$ has stable rank 1.  
As a consequence, we show that $\cA_{\I}$ has a comparability of projections and that a strong dichotomy occurs regarding projections in $\cA_{0}$.  
Either $\cA_{0}$ is projectionless, or $\set{\tr(p)}{p \in P(\cA_{0})}$ is dense in $[0, 1]$.

Along the way, we observe that the known results for free products over the scalars are insufficient to help derive these results for a (sub)factor planar algebra $\cP_{\bullet}$.  
We therefore develop analogous theorems for amalgamation over $\C^{2}$ which prove useful in our analysis.
\subsection{Simplicity, unique trace, and weak unperforation}\label{sec:Simple}

We now show that the algebras $\cA_{k}$ are simple with unique trace, and their $K_{0}$ group is weakly unperforated, i.e. $nx > 0$ for some $n \in \N$ implies $x > 0$.  
It turns out that the analysis simplifies considerably if $\Gamma$ has a particular decomposition.
This is where we begin.

\begin{thm}\label{thm:SimpleTrace1}
Suppose that one of the following conditions hold:
\be
\item
$\Gamma$ can be written as the union $\Gamma_1\cup \Gamma_2$, where $\Gamma_1$ and $\Gamma_2$ each have at least one edge, and the intersection $\Gamma_1\cap \Gamma_2$ only consists of the vertex $\star$.
\item
$\cP_\bullet$ is irreducible, i.e., $\Gamma$ has only one vertex at depth zero, only one vertex at depth 1, and only one edge between them.
\ee
Then $\cA_0$ is simple with unique tracial state, and
$
K_{0}(\cA_{0})^{+} = \{x \in K_{0}(\cA_{0}): \Tr(x) > 0\} \cup \{0\}.
$
\end{thm}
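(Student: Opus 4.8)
The plan is to reduce $\cA_0 = p_\star \cS(\Gamma) p_\star$ to an explicit reduced free product of tracial $C^*$-algebras to which the structural results of Section~\ref{sec:freeproduct} apply, and then invoke Theorems~\ref{thm:Avitzour} and~\ref{thm:positivecone}. First I would use Facts~\ref{facts:edge}: the edge elements $g_\e, g_{\e'}$ generate $\cS(\Gamma)$ over $\cC \cong C_0(V(\Gamma))$, and the self-adjoint combinations $g_\e$ (loops) and $h_\e = g_{\e'} + g_{\e''}$ (non-loops) are free with amalgamation over $\cC$ with respect to $E$. Each single-edge piece $\cS_\e$ has the explicit form described in Subsection~\ref{sec:FreeGraph}, and the free-Poisson distributions in Facts~\ref{facts:edge} guarantee that the corners $p_\star \cS_\e p_\star$ contain a diffuse abelian subalgebra whenever $\e$ is incident to $\star$.

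In case~(1), the hypothesis that $\Gamma = \Gamma_1 \cup \Gamma_2$ with $\Gamma_1 \cap \Gamma_2 = \{\star\}$ means that, over the algebra $\cC$, the generators coming from edges in $\Gamma_1$ are free from those coming from edges in $\Gamma_2$. The key step is to compress to $p_\star$: by Lemma~\ref{lem:compressfree} (applied with $A$ the $\star$-corner and $B$ the complementary summand of each $\cC$-block), the compression $p_\star \cS(\Gamma) p_\star$ becomes an honest \emph{reduced free product over the scalars} $\C = p_\star \cC p_\star$ of two unital tracial $C^*$-algebras $B_1, B_2$, where $B_i = p_\star \cS(\Gamma_i) p_\star$. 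Since each $\Gamma_i$ has at least one edge incident (ultimately) to $\star$, each $B_i$ contains a diffuse abelian subalgebra, hence contains a Haar unitary $u_i$ with $\tr(u_i) = 0$, and by choosing two such in one factor one produces $u_2, u_2'$ with $\tr(u_2) = \tr(u_2') = \tr(u_2^* u_2') = 0$. Thus the Avitzour condition holds. In case~(2), irreducibility makes $p_\star \cS_\e p_\star \cong (C[0,1], d\lambda)$ for the single edge at $\star$, which is diffuse, and I would apply Theorem~\ref{thm:diffuse}(1) directly with $B_1$ this diffuse corner and $B_2$ the corner of the rest of the graph (which must be $\neq \C$ since $\Gamma$ has more than one edge).

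Given the Avitzour condition, Theorem~\ref{thm:Avitzour} yields that $\cA_0$ is simple with unique tracial state, and Theorem~\ref{thm:positivecone} computes $K_0(\cA_0)^+ \cap G = \{x \in G : \Tr(x) > 0\} \cup \{0\}$ for $G = K_0(j_1)(K_0(B_1)) + K_0(j_2)(K_0(B_2))$. The remaining point is to identify $G$ with all of $K_0(\cA_0)$: using Fact~\ref{fact:KTheory} (and the Morita equivalence $\cA_0 \sim \cS(\Gamma)$ of Theorem~\ref{thm:RMEGraph}), $K_0(\cA_0)$ is freely generated by the classes $[p_\alpha]$, and each such class lies in the image of one of the $K_0(j_i)$ because every vertex belongs to $\Gamma_1$ or $\Gamma_2$; hence $G = K_0(\cA_0)$ and the positive-cone formula holds on the whole group.

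The main obstacle I anticipate is the bookkeeping in the compression step: verifying that the amalgamated free product over $\cC$, after compressing by the rank-one projection $p_\star$, genuinely collapses to a reduced free product \emph{over the scalars} in the precise form demanded by Lemma~\ref{lem:compressfree}, and that the corner algebras $B_i$ are exactly $p_\star \cS(\Gamma_i) p_\star$ with the correct inherited traces. One must also confirm that the diffuse subalgebras survive compression and furnish the trace-zero unitaries—this relies essentially on the free-Poisson/no-atom statements in Facts~\ref{facts:edge}, so the delicate part is matching those distributional facts to the diffuseness hypothesis rather than any hard new estimate.
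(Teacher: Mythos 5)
Your treatment of case (1) is essentially the paper's own argument: decompose $\cA_0 = p_\star\cS(\Gamma)p_\star$ as the scalar reduced free product $p_\star\cS(\Gamma_1)p_\star \Asterisk\, p_\star\cS(\Gamma_2)p_\star$, get Haar unitaries from the diffuse elements $g_{\e_i}^*g_{\e_i}$ attached to edges of $\Gamma_i$ at $\star$, apply Theorems \ref{thm:Avitzour} and \ref{thm:positivecone}, and use fullness of $p_\star$ in each $\cS(\Gamma_i)$ (from the proof of Theorem \ref{thm:RMEGraph}) to see that the subgroup $G$ is all of $K_0(\cA_0)$. One quibble: Lemma \ref{lem:compressfree} is not the right tool for the displayed decomposition --- it concerns a scalar free product compressed by the unit of a direct summand of one free factor, not the compression of an amalgamated free product over $\cC$ by a minimal projection of the amalgam. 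What actually makes the decomposition work is the wedge structure of $\Gamma$ at $\star$: every loop based at $\star$ factors uniquely into loops lying entirely in $\Gamma_1$ or entirely in $\Gamma_2$, so the corner is generated by $p_\star\cS(\Gamma_1)p_\star$ and $p_\star\cS(\Gamma_2)p_\star$, and freeness over $\C p_\star$ follows from freeness with amalgamation over $\cC$.

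Case (2), however, contains a genuine gap. You propose to write $\cA_0$ as a free product based at $\star$ with $B_1 = p_\star\cS_\e p_\star$ and $B_2$ ``the corner of the rest of the graph,'' claiming $B_2 \neq \C$. But in the irreducible case $\e$ is the \emph{only} edge of $\Gamma$ incident to $\star$; no edge of $\Gamma\setminus\{\e\}$ touches $\star$, so $p_\star C^*(\Gamma\setminus\{\e\})p_\star = \C p_\star$ no matter how many edges $\Gamma\setminus\{\e\}$ has. Worse, $\cA_0$ is not generated by $p_\star\cS_\e p_\star$ together with $p_\star C^*(\Gamma\setminus\{\e\})p_\star$: that subalgebra is just the abelian algebra $C^*(g_\e^*g_\e)$, whereas $\cA_0$ contains all the (generally noncommuting) elements $g_\e^* h g_\e$ with $h$ a loop at the depth-one vertex $\alpha$ in $\Gamma\setminus\{\e\}$. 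So there is no free product decomposition over the scalars based at $\star$ in this case --- which is precisely why the theorem isolates it. The paper's fix is to move the basepoint: since $\Tr(\jw{1}) = \delta > 1$, the element $g_\e^*g_\e$ is invertible in $\cA_0$ (Fact \ref{fact:CupInvertible}), so the polar part $w_\e$ of $g_\e$ lies in $\cS(\Gamma)$ and conjugation by $w_\e$ identifies $\cA_0$ with a corner sitting under $p_\alpha = \jw{1}$, where the remaining edges become visible. There Lemma \ref{lem:compressfree} genuinely applies and gives $\cA_0 \cong \cA_\e \Asterisk\, w_\e w_\e^*\bigl((\C\oplus\C)\Asterisk \cA_{\Gamma\setminus\{\e\}}\bigr)w_\e w_\e^*$, with $\cA_\e = w_\e C^*(g_\e^*g_\e)w_\e^*$ diffuse abelian and the second factor containing a trace-zero unitary because its weak closure is an interpolated free group factor \cite{MR3110503}; Avitzour's condition, and with it simplicity, unique trace, and the positive-cone computation via Theorem \ref{thm:positivecone}, then follow, using fullness of $w_\e w_\e^*$ in $\cA_{\Gamma\setminus\{\e\}}$ for the $K_0$ statement. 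Note finally that even if your decomposition had existed, Theorem \ref{thm:diffuse}(1) alone yields simplicity, unique trace, and stable rank 1 but not the positive-cone formula, which in this paper is only available through the Avitzour condition (Theorem \ref{thm:positivecone}) --- and your $B_2 = \C$ admits no trace-zero unitary at all.
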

\begin{proof}
Throughout this proof, we assume that $\mu$ is the quantum dimension weighting on $V(\Gamma)$ coming from $\cP_{\bullet}$. 
If (1) holds, then
$$
\cA_{0} \cong p_{\star}\cS(\Gamma_{1})p_{\star} {\Asterisk}\, p_{\star}\cS(\Gamma_{2})p_{\star}.
$$
Let $\e_{1} \in E(\vec{\Gamma}_{1})$ and $\e_{2} \in E(\vec{\Gamma}_{2})$ with $t(\e_{i}) = \star$.
Then $g_{\e_{i}}^{*}g_{\e_{i}}$ generates a diffuse abelian $C^{*}$ algebra for $i \in \{1, 2\}$, so it follows that $\cA_{0}$ satisfies the Avitzour condition.  
Furthermore, the proof of Theorem \ref{thm:RMEGraph} implies $p_{\star}$ is full in both $\cS(\Gamma_{1})$ and $\cS(\Gamma_{2})$.  
Therefore, if $\alpha \in V(\Gamma)$, then $[p_{\alpha}]$ is in the $K_{0}$ group of at least one of the $C^{*}$ algebras in the free product.  
Thus $\cA_{0}$ is simple, has unique tracial state, and the positive cone of $K_{0}(\cA_{0})$ has the desired properties.

If (2) holds, let $\e$ denote the (unique) edge in $\vec{\Gamma}$ with $t(\e) = \star$ and $s(\e) = \alpha$, the single depth 1 vertex. Note that $p_{\alpha} = \jw{1}$ and $\Tr(\jw{1}) = \delta > 1$.  Therefore, $g_{\e}^{*}g_{\e}= (\text{constant})\cdot \cup$ is invertible in $\cA_{0}$, so both elements on the right side of the polar decomposition $g_{\e} = w_{\e}|g_{\e}|$ are in $\cS(\Gamma, \mu)$.  Set
$$
\cA_{\e} = w_{\e}C^{*}(g_{\e}^{*}g_{\e})w_{\e}^{*} \text{ and } \cA_{\Gamma \setminus \{\e\}} = \jw{1}C^{*}(\Gamma \setminus \{\e\})\jw{1}.
$$
Then from Lemma \ref{lem:compressfree}, we see that
$$
\cA_{0} 
\cong w_{\e}\cA_{\I}w_{\e}^{*} 
= 
\cA_{\e} {\Asterisk} w_{\e}w_{\e}^{*}\left( \left(\overset{w_{\e}w_{\e}^{*}}{\C} \oplus \C\right) {\Asterisk} \cA_{\Gamma \setminus \{\e\}}\right)w_{\e}w_{\e}^{*}
$$
The algebra $\cA_{\e}$ is diffuse abelian, and from \cite{MR3110503}, the second algebra in the free product completes to an interpolated free group factor, so it contains a unitary of trace zero.  Therefore $\cA_{0}$ satisfies the Avitzour condition.  Furthermore, the arguments in Theorem \ref{thm:RMEGraph} imply that $w_{\e}w_{\e}^{*}$ is full in $\cA_{\Gamma \setminus \{\e\}}$ so we can deduce that each $[p_{\beta}]$ for $\beta$ a vertex of $\Gamma$ appears in the $K_{0}$ group of at least one of the two algebras in the free product.  Therefore, we again deduce that $\cA_{0}$ is simple, has unique tracial state, and the positive cone of $K_{0}(\cA_{0})$ has the desired properties.
\end{proof}

We now suppose that our planar algebra $\cP_\bullet$ does not satisfy either of the conditions in Theorem \ref{thm:SimpleTrace1}.  
Then the principal graph $\Gamma$ must satisfy the following condition.

\begin{assumption}\label{assume:Cycle}
There is an edge $\e$ in $\Gamma$ connecting $\star$ to a vertex $\alpha \neq \star$, and there is a path from $\star$ to $\alpha$ which avoids $\e$.
\end{assumption}

We now show the algebra $\cA_0$ is simple with unique trace with the above assumption, even though we are no longer able to express $\cA_{0}$ as a free product over the scalars.  In order to analyze this case we will need some additional free product lemmas which are in the spirit of Subsection \ref{sec:freeproduct} for amalgamation over $\C^{2}$.

We start with the following lemma, which is an analogue of Lemma \ref{lem:compressfree} for amalgamated free products, and the techniques used in the proof mirror the techniques used in \cite{MR1201693}.

\begin{lem} \label{lem:compressfree2}
Suppose there are two unital, tracial, $C^{*}$-algebras $\overset{p + r}{B} \oplus \overset{r'}{\C} \text{ and } C$ which both contain $D = \overset{p}{\C} \oplus \overset{q}{\C}$ as a unital $C^{*}$-subalgebra with $q = r + r'$. Assume that the algebras are equipped with conditional expectations $E^{1}_{D}$ and $E^{2}_{D}$ onto $D$ respectively as well as traces $\tr_{1}$ and $\tr_{2}$ so that $\tr_{i} = \tr_{i} \circ E^{i}_{D}$ for $i = 1, 2$, and the restrictions of $\tr_{1}$ and $\tr_{2}$ to $D$ coincide.  Form the reduced amalgamated free product
$$
A = \left( \overset{p + r}{B} \oplus \overset{r'}{\C} \right) \underset{D}{\Asterisk}\, C.
$$
Then
$$
(p+r)A(p+r) = (B, E^{1}_{D'}) \underset{D'}{\Asterisk}\, (p+r)\left(\left(\overset{p}{\C} \oplus \overset{r}{\C} \oplus \overset{r'}{\C}\right) \underset{D}{\Asterisk}\, C\, , E^{2}_{D'}\right)(p+r)
$$
where $D' = \overset{p}{\C} \oplus \overset{r}{\C}$, the conditional expectations $E^{i}_{D'}$ onto $D'$ are the trace preserving ones, and the free product is reduced.
\end{lem}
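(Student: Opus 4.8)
The plan is to mirror the reduced-word bookkeeping of \cite{MR1201693} that underlies Lemma~\ref{lem:compressfree}, but carried out relative to $D$ and $D'$ rather than over $\C$. Write $A_1 = \overset{p+r}{B}\oplus\overset{r'}{\C}$ and $e = p+r$, so that $A = A_1 \underset{D}{\Asterisk} C$ and the task is to identify $eAe$. I would first record the basic identifications. Since $A_1 = B \oplus \C r'$ is a direct sum, $r'$ and $e = 1_{A_1}-r'$ are central projections of $A_1$, with $eA_1e = B$, $r'A_1r' = \C r'$, and $eA_1r' = 0$; moreover $eDe = \overset{p}{\C}\oplus\overset{r}{\C} = D'$, even though $D'\not\subseteq D$ because $r\notin D$, which is exactly why the lemma does not follow formally from Lemma~\ref{lem:compressfree}. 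The algebra $\overset{p}{\C}\oplus\overset{r}{\C}\oplus\overset{r'}{\C}$ is a unital $D$-subalgebra of $A_1$ (it contains $D$ because $q=r+r'$), and by uniqueness of trace-preserving expectations $E^1_D$ restricts to it correctly, so functoriality of reduced amalgamated free products embeds $N := (\overset{p}{\C}\oplus\overset{r}{\C}\oplus\overset{r'}{\C})\underset{D}{\Asterisk} C$ trace-preservingly into $A$, and $\cB_2 := eNe$ is a unital (unit $e$) subalgebra of $eAe$ containing $D'$. Both $B=eA_1e$ and $\cB_2$ contain $D'$, and $E^1_{D'},E^2_{D'}$ are forced by uniqueness to be the restrictions to $B,\cB_2$ of the unique trace-preserving expectation $E_{D'}\colon eAe\to D'$, since all the relevant traces agree with the canonical free-product trace $\tr$.

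Next I would prove generation, $eAe = C^*(B\cup\cB_2)$. As $A$ is generated by $A_1$ and $C$ with $C\subseteq\cB_2$, it suffices to treat a compressed alternating word $ewe$. Writing each $A_1$-letter as $b+\lambda r'$ with $b=e(\cdot)e\in B$ and expanding, the absorption identities $r'B = Br' = 0$ and $er'=0$ show that every surviving term factors as an alternating product of $B$-letters $\beta_j$ and compressed blocks $eSe$, where $S$ is a word in the letters of $C$ together with the central projection $r'$; since $r'\in N$ and the $C$-letters lie in $N$, each $eSe$ lies in $\cB_2$. Concretely, inserting $1=e+r'$ around each $\beta_j$ and using $\beta_j r' = 0$ gives $\beta_i S\beta_{i+1} = \beta_i(eSe)\beta_{i+1}$, so $ewe\in C^*(B\cup\cB_2)$.

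The substance is the freeness of $(B,E^1_{D'})$ and $(\cB_2,E^2_{D'})$ over $D'$ with respect to $E_{D'}$. The key preliminary is the inclusion $\ker E^1_{D'}\subseteq \ker E^1_D$: for $b\in B$ one has $bq = br$ since $br'=0$, so $E^1_{D'}(b)=0$ forces $\tr(bp)=\tr(br)=0$, hence $\tr(bq)=0$ and
$$
E^1_D(b) = \frac{\tr(bp)}{\tr(p)}\,p + \frac{\tr(bq)}{\tr(q)}\,q = 0.
$$
Thus every centred $B$-letter is a genuine length-one reduced word on the $A_1$-side of $A$. I would then expand the centred $\cB_2$-letters through the reduced-word decomposition of $N$ over $D$ and, for an alternating centred word $x_1\cdots x_n$, compute the two scalars $\tr(p\,x_1\cdots x_n)$ and $\tr(r\,x_1\cdots x_n)$ that determine $E_{D'}(x_1\cdots x_n)$, using orthogonality of distinct reduced words of $A$ together with the $L^2(A,\tr)$-expansions of $\widehat p$ and $\widehat r$.

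The main obstacle is the collision of $A_1$-side letters at the junction between a $B$-factor and a $\cB_2$-factor. Unlike an ordinary alternating product, both $\ker E^1_{D'}$ (which sits in $\ker E^1_D$) and the extreme letters of a centred $\cB_2$-element, coming from $(\overset{p}{\C}\oplus\overset{r}{\C}\oplus\overset{r'}{\C})\ominus D$, live on the $A_1$-side of $A$; when they meet they must be multiplied and re-centred inside $A_1$, shortening the reduced word and threatening to create a surviving $D'$-component. Controlling these collisions is precisely where the hypotheses enter: the centred part of $\overset{p}{\C}\oplus\overset{r}{\C}\oplus\overset{r'}{\C}$ is the single direction $v = r - (\tr(r)/\tr(r'))\,r'$, which the compression absorbs into $D'$ (one checks $eve = r$) and which $r'B=0$ annihilates against $B$, so that $\ker E^1_{D'}\cdot v\subseteq B$ and the induction on word length closes without producing a surviving $D'$-term. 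Verifying that $E_{D'}(x_1\cdots x_n)=0$ throughout is the heart of the computation and mirrors the reduced-word analysis of \cite{MR1201693}. Once freeness and generation are in hand, the universal characterisation of the reduced amalgamated free product — as the $C^*$-algebra generated by two $D'$-subalgebras that are free with respect to a faithful $D'$-valued expectation — identifies $eAe$ with $B\underset{D'}{\Asterisk}\cB_2$, which is the assertion.
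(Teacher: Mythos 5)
Your proposal is correct and follows essentially the same route as the paper's proof: both introduce the auxiliary free product $N=\left(\C p\oplus\C r\oplus\C r'\right)\underset{D}{\Asterisk}C$ (the paper's $C'$), prove generation by compressing alternating words and using $r'B=Br'=0$, and prove freeness by expanding centred letters of $eNe$ into reduced words of $N$ over $D$ --- whose only centred direction on the $A_1$-side is your $v$ (the paper's element $a$ is a scalar multiple of it), with lone-$v$ terms discarded precisely because $eve=r\in D'$ --- closing the induction via $\ker E^1_{D'}\subseteq\ker E^1_D$ and the collision identity $B^0v\subseteq B^0$, so that freeness of $A_1$ and $C$ over $D$ inside $A$ gives vanishing expectation of the resulting alternating words. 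The one slip, the assertion $C\subseteq\cB_2$, is harmless, since what your generation argument actually uses is only that the compressed blocks $eSe$, for $S$ a word in $C$-letters and $r'$, lie in $\cB_2=eNe$.
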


\begin{proof}
The case $r'=0$ is tautologically true, so we assume $r'>0$.
Let 
$$
C'=\left(\overset{p}{\C} \oplus \overset{r}{\C} \oplus \overset{r'}{\C}\right) \underset{D}{\Asterisk}\, C.
$$
We first note that $A$ is generated by $B = (p+r)B$ and $C'$.  Since $p+r$ is the unit of $B$, it follows that if an alternating word in $C'$ and $B$ is compressed by $p+r$, then the compressed word is a word in $B$ and $(p+r)C'(p+r)$.

Let $B^{0}\subset B$ and $((p+r)C'(p+r))^{0}\subset (p+r)C'(p+r)$ be the respective subspaces of elements with zero expectation onto $D'$.
In addition, let $(C')^{0}\subset C'$ be the subspace of elements with zero expectation onto $D$, and notice that $((p+r)C'(p+r))^{0} \subset (C')^{0}$, and that elements in $B^{0}$ also have zero expectation onto $D$ in $A$.
Now, consider an alternating word $w = x_{1}y_{1}\cdots x_{n}y_{n}x_{n+1}$ where $x_{i} \in B^{0}$ for $2 \leq i \leq n$, $x_{1}, x_{n+1} \in D' \cup B^{0}$, and $y_{i} \in ((p+r)C'(p+r))^{0}$.
We need to show that $E_{D'}(w) = 0$.

In $C'$, let $a = (p+r) - (\alpha p + \beta q)$ where $\alpha$ and $\beta$ are chosen so that $E_{D}(a) = 0$.
Notice that $\alpha = 1$ and $\beta < 1$ since $r' > 0$.
Expectationless elements in $C'$ are therefore norm limits of sums of alternating words in $\{a\}$ and $C^{0}$ (the $D$-expectationless elements in $C$), and hence we approximate each $y_{i}$ by a linear combination of alternating words in $\{a\}$ and $C^{0}$.
Since $y_{i}$ is supported under $p+r$, we may assume that the coefficient of the single word $a$ is zero since $(p+r)a(p+r)$ has nonzero expectation onto $D'$.
Inserting these words for each $y_{i}$ in $w$, and distributing and regrouping, gives a linear combination of alternating words in $(B^{0} \cup \{a\})$ and $C^{0}$ which has expectation zero by freeness.
\end{proof}

We now use the following lemma, which is a special case of \cite[Theorems 3.5 and 4.5]{MR2782689}.

\begin{lem}[\cite{MR2782689}] \label{lem:simpleamalgamated}
Let $C_{1}$ and $C_{2}$ be unital $C^{*}$ algebras containing the unital $C^{*}$ subalgebra $D$ unitally.  Suppose each $C_{i}$ is equipped with a trace $\tr_{i}$ such that $\tr_{1}$ and $\tr_{2}$ coincide on $D$ and that there exist trace preserving conditional expectations $E^{i}_{D}$ of $C_{i}$ onto $D$.  Consider the reduced amalgamated free product with conditional expectation $E$
$$
(C,E) = (C_{1}, E^{1}_{D}) \underset{D}{\Asterisk} (C_{2}, E^{2}_{D}).
$$
Then $C$ is simple and has unique trace $\tr = \tr_{1} \circ E_{D} = \tr_{2} \circ E_{D}$ provided the following conditions hold:
\be
\item
There exist unitaries $u_{1} \in C_{1}$ and $u_{2}, u_{2}'$ in $C_{2}$ such that $E_{D}(u_{1}) = 0 = E_{D}(u_{2}) = E_{D}(u_{2}') = E(u_{2}^{*}u_{2}')$.
\item
For every $a_{1}, ..., a_{n} \in D$ with zero trace, there exists a unitary $u \in (C_{2})^{0}$ such that $E_{D}(ua_{i}u^{*}) = 0$ for each $i$.
\item
There are unitaries $w, v \in (C_{2})^{0}$ such that $E_{D}(wav) = 0$ for all $a \in D$.
\ee
Furthermore, let $G$ be the subgroup of $K_{0}(C)$ which is generated by $K_{0}(j_{1})(K_{0}(C_{1}))$ and $K_{0}(j_{2})(K_{0}(C_{2}))$ with $j_{i}: C_{i} \rightarrow C$ the canonical inclusion.  If the above three conditions hold, then
$$
K_{0}(C)^{+} \cap G = \{x \in \Gamma : \tr(x) > 0\} \cup \{0\}.
$$
\end{lem}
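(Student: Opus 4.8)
The plan is to run an Avitzour-type argument adapted to amalgamation over $D$, with everything resting on one operator-norm averaging estimate: for every $x \in C$ and every $\e > 0$ there should exist unitaries $u_1, \ldots, u_m \in C$ with
$$
\left\| \frac{1}{m} \sum_{i=1}^m u_i x u_i^* - E(x) \right\| < \e. \qquad (\star)
$$
Granting $(\star)$, both the trace and simplicity statements follow formally. If $\phi$ is any tracial state on $C$, then $\phi(x) = \phi\bigl(\frac1m \sum_i u_i x u_i^*\bigr)$ by traciality, which $(\star)$ forces to within $\e$ of $\phi(E(x))$; letting $\e \to 0$ gives $\phi = \phi \circ E$. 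Condition (2) then pins $\phi$ down on $D$: for $a \in D$ with $\tr(a) = 0$ choose $u \in (C_2)^0$ with $E(uau^*) = 0$, so that $\phi(a) = \phi(uau^*) = \phi(E(uau^*)) = 0$; hence $\phi|_D$ vanishes on trace-zero elements, so $\phi|_D = \tr|_D$ and $\phi = \tr|_D \circ E = \tr$. For simplicity, let $J$ be a nonzero closed two-sided ideal and pick a positive $x \in J \setminus \{0\}$; since the averages in $(\star)$ lie in $J$ and converge in norm, $E(x) \in J \cap D$ and $E(x) \neq 0$ by faithfulness of $E$. Writing $d = E(x)$ and applying condition (2) to the trace-zero element $d - \tr(d)1 \in D$ produces $u \in (C_2)^0$ with $E(udu^*) = \tr(d)\,1$; as $udu^* \in J$, a second application of $(\star)$ places $\tr(d)\,1 \in J$, and $\tr(d) > 0$ forces $1 \in J$, so $J = C$.

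The substance of the argument, and the \emph{main obstacle}, is $(\star)$ itself, and this is where conditions (1) and (3) do their work. One first reduces to $x$ a reduced alternating word with $E(x) = 0$, since such words span a dense subspace of $\ker E$ and for them the target average is $0$. The mechanism is conjugation by a Haar-like unitary: from the data one manufactures a unitary $h \in C_2$ (inside the $C^*$-algebra generated by $u_2, u_2'$) whose powers satisfy $E(h^k) = 0$ for $k \neq 0$, so that the conjugates $h^k x h^{-k}$ of a fixed reduced word become nearly orthogonal for the $D$-valued inner product $\langle y, z \rangle = E(y^* z)$ on the Hilbert module $L^2(C, E)$. A Cauchy--Schwarz/orthogonality estimate then bounds $\bigl\| \frac1N \sum_{k=1}^N h^k x h^{-k} \bigr\|$ by $O(N^{-1/2})$, which yields $(\star)$. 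The condition $E(u_2^* u_2') = 0$ in (1) supplies precisely the freeness needed to produce $h$ with Haar-like moments, while condition (3), the unitaries $w, v \in (C_2)^0$ with $E(wav) = 0$ for all $a \in D$, is what tames the boundary $D$-letters of a reduced word so that the conjugated words stay expectationless and genuinely orthogonal rather than resonating with $D$. Arranging this orthogonality to hold in operator norm, uniformly over arbitrary reduced words, is the delicate point and is exactly where the amalgamated setting is harder than the scalar Theorem \ref{thm:Avitzour}.

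For the positive-cone statement I would transcribe the Dykema--R{\o}rdam proof behind Theorem \ref{thm:positivecone}, using $E$ in place of the scalar trace for the intermediate bookkeeping. The inclusion $K_0(C)^+ \cap G \subseteq \{x \in G : \tr(x) > 0\} \cup \{0\}$ is immediate, since simplicity and uniqueness of $\tr$ make $\tr$ faithful, so a nonzero projection in any $M_n(C)$ has strictly positive trace. For the reverse inclusion, given $x \in G$ with $\tr(x) > 0$ one writes $x = K_0(j_1)(x_1) + K_0(j_2)(x_2)$ with $x_i \in K_0(C_i)$, and realizes $x$ as $[p]$ for an honest projection $p \in M_n(C)$ by a telescoping construction: the amalgamated freeness together with the mixing unitaries of condition (1) supplies partial isometries that splice the contributions from $C_1$ and $C_2$ and absorb the defect, exactly as in the scalar free-product case. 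I expect $(\star)$ to be the crux of the whole lemma, with the $K_0$ statement then following by faithfully adapting the Dykema--R{\o}rdam machinery to amalgamation over $\C^2$.
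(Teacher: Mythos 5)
The first thing to say is that the paper does not prove this lemma at all: it is quoted, with attribution, as a special case of Theorems 3.5 and 4.5 of \cite{MR2782689}, so there is no internal argument to compare against and your proposal is effectively competing with the cited literature. Your formal scheme does mirror the structure of those proofs, and the deductions you draw from the averaging estimate $(\star)$ are correct: traciality plus $(\star)$ gives $\phi = \phi\circ E$ for any tracial state $\phi$; condition (2) then forces $\phi$ to vanish on trace-zero elements of $D$, so $\phi = \tr$; and for a closed two-sided ideal $J$ containing a positive $x\neq 0$, averaging pushes $E(x)$ into $J$, after which conjugation by the unitary from condition (2) and a second averaging push a positive multiple of $1$ into $J$. (Two small repairs: with the non-normalized traces used in the paper's application of this lemma, the trace-zero element should be $d - (\tr(d)/\tr(1))1$ rather than $d-\tr(d)1$; and the faithfulness of $E$ that you invoke needs a word of justification, e.g.\ via faithfulness of the free-product trace.)

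The genuine gap is the mechanism you propose for $(\star)$ itself, which fails as stated. Orthogonality of the conjugates $h^k x h^{-k}$ with respect to the $D$-valued inner product $\langle y,z\rangle = E(y^*z)$, combined with Cauchy--Schwarz, bounds only the Hilbert-module norm $\|E(y^*y)\|^{1/2}$ of the average $y = \tfrac{1}{N}\sum_k h^k x h^{-k}$ by $O(N^{-1/2})$; since $\|E(y^*y)\|^{1/2} \leq \|y\|$, this inequality points in the \emph{wrong direction} and gives no control on the operator norm, which is what $(\star)$ demands. The actual Powers--Avitzour mechanism is spatial, not module-theoretic: one shows each conjugate $T_k$ satisfies $T_k = P_k T_k(1-P_k)$ for mutually orthogonal projections $P_k$ in the GNS representation (projections onto spans of reduced words with prescribed prefixes), whence $|\langle \textstyle\sum_k T_k\xi,\eta\rangle| \leq \sqrt{N}\max_k\|T_k\|$ for unit vectors and the average is $O(N^{-1/2})$ in operator norm. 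Producing that prefix structure uniformly over arbitrary reduced words---precisely where conditions (1) and (3) are consumed, and where the boundary $D$-letters make amalgamation genuinely harder than the scalar Theorem \ref{thm:Avitzour}---is the real content of the cited theorem, and your sketch does not supply it; compare the word-length-weighted inequalities (with factors like $(2k+1)^{3/2}K(y)$) that the paper must import from \cite{MR1478545} whenever it needs to convert $2$-norm estimates into operator-norm estimates. The same criticism applies to the $K_0$ clause, which you defer to ``transcribing'' Dykema--R{\o}rdam \cite{MR1601917}: that adaptation to amalgamation over $\C^2$ is itself the second half of the cited result, not a routine transcription. As written, then, the proposal reduces the lemma to its two hardest components without proving either.
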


We now prove our desired theorem:
\begin{thm} \label{thm:SimpleTrace2}
The $C^{*}$ algebra $\cA_{0}$ is simple and has unique trace when $\Gamma$ satisfies Assumption \ref{assume:Cycle}.  Moreover, $K_{0}(\cA_{0})^{+} = \{x\in K_{0}(\cA_{0}): \tr(x) > 0\} \cup \{0\}$.
\end{thm}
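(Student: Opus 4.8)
The plan is to mirror the structure of Theorem \ref{thm:SimpleTrace1}(2), but with the free product over the scalars replaced by a reduced amalgamated free product over $D\cong\C^2$, so that Lemmas \ref{lem:compressfree2} and \ref{lem:simpleamalgamated} do the work that Theorem \ref{thm:Avitzour} and Lemma \ref{lem:compressfree} did before. The role of Assumption \ref{assume:Cycle} is to supply, in place of the decomposition/irreducibility hypotheses of Theorem \ref{thm:SimpleTrace1}, enough connectivity to meet the mixing conditions of Lemma \ref{lem:simpleamalgamated}. Throughout I use $\cA_0\cong p_\star\cS(\Gamma,\mu)p_\star$ and $\mu(\star)=1\le\mu(\alpha)$, where $\alpha$ is the depth-one neighbor of $\star$ joined by $\e$.

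First I would isolate the distinguished edge by writing $\cS(\Gamma,\mu)=\cS_\e \Asterisk_{C_0(V(\Gamma))} \cR$ with $\cR=\cS(\Gamma\setminus\{\e\},\mu)$, set $P=p_\star+p_\alpha$ and $D=\C p_\star\oplus\C p_\alpha\cong\C^2$, and compress by $P$. The only part of $\cS_\e$ not lying in $C_0(V(\Gamma))$ is supported under $P$, and any element $x=PxP$ has $E_{C_0(V(\Gamma))}(x)=P E_{C_0(V(\Gamma))}(x)P\in D$; hence on each of $C_1:=P\cS_\e P$ and $C_2:=P\cR P$ the trace-preserving expectation onto $D$ agrees with $E_{C_0(V(\Gamma))}$, so freeness over $C_0(V(\Gamma))$ descends to freeness over $D$. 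Using the compression techniques of \cite{MR1201693} (exactly as in the proof of Lemma \ref{lem:compressfree2}) this gives $P\cS(\Gamma)P = C_1 \Asterisk_D C_2$. Here $C_1$ is $M_2(\C)\otimes C[0,1]\oplus\C$ when $\mu(\alpha)>1$ and $\{f\colon[0,1]\to M_2(\C)\mid f(0)\text{ diagonal}\}$ when $\mu(\alpha)=1$; in either case $C_1$ contains a diffuse abelian subalgebra and a unitary $u_1$ with $E_D(u_1)=0$ built from the off-diagonal part of $M_2(\C)$.

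Next I would verify the three hypotheses of Lemma \ref{lem:simpleamalgamated} for $C_1\Asterisk_D C_2$. Condition (1) follows from $u_1$ together with suitable unitaries $u_2,u_2'\in C_2$; the decisive conditions (2) and (3), which demand that $C_2$ conjugate and translate $D$ into its $E_D$-expectationless part, are where Assumption \ref{assume:Cycle} is used. The path from $\star$ to $\alpha$ avoiding $\e$ places $\star$ and $\alpha$ in the same component of $\Gamma\setminus\{\e\}$, so by the fullness argument of Theorem \ref{thm:RMEGraph} the corner $C_2=P\cR P$ is not block-diagonal with respect to $p_\star,p_\alpha$: composing edge elements along the path yields nonzero elements of $p_\star\cR p_\alpha$ whose Free-Poisson distributions (Facts \ref{facts:edge}) furnish the diffuse abelian subalgebras and partial isometries from which the required expectation-killing unitaries are assembled. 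Lemma \ref{lem:simpleamalgamated} then shows $P\cS(\Gamma)P$ is simple with unique trace and that $K_0(P\cS(\Gamma)P)^+\cap G=\{x\in G:\Tr(x)>0\}\cup\{0\}$, where $G$ is generated by the images of $K_0(C_1),K_0(C_2)$.

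Finally I would transfer these conclusions to $\cA_0$. Since $\e$ lies on a cycle, $\Gamma\setminus\{\e\}$ stays connected, so $P$ is full in $\cR$ and every vertex class appears in $K_0(C_2)$; with the identification $K_0=\Z[\{[p_\beta]\}_{\beta\in V(\Gamma)}]$ this gives $G=K_0(P\cS(\Gamma)P)$, so the positive-cone description holds for all of $K_0$. As $P\cS(\Gamma)P$ is simple, the nonzero projection $p_\star$ is automatically full, so $\cA_0=p_\star\cS(\Gamma)p_\star=p_\star\bigl(P\cS(\Gamma)P\bigr)p_\star$ is a full corner; simplicity and uniqueness of trace pass to full corners, and Morita invariance of $K_0$ (Facts \ref{thm:StableIsoME}) transports the ordered-group statement to $\cA_0$, the value $\tr$ versus the semifinite $\Tr$ being immaterial to the sign condition. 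The step I expect to be hardest is the verification of conditions (2) and (3) of Lemma \ref{lem:simpleamalgamated}: because $\Tr(p_\star)=1$ while $\Tr(p_\alpha)=\mu(\alpha)$ may be strictly larger, there is no genuine unitary of $C_2$ interchanging $p_\star$ and $p_\alpha$, so the expectationless unitaries must be manufactured from the diffuse Free-Poisson structure of the path elements rather than from an honest swap of the two blocks.
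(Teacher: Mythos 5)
There is a genuine gap, and it sits exactly at the step you flagged as hardest; unfortunately it is not a matter of working harder within your setup, because your setup cannot satisfy the hypotheses of Lemma \ref{lem:simpleamalgamated}. You compress by $P=p_\star+p_\alpha$ and amalgamate over $D=\C p_\star\oplus\C p_\alpha$, whose minimal projections have \emph{unequal} traces $1$ and $\mu(\alpha)$. Your edge factor is then $C_1=P\cS_\e P\cong \bigl(M_2(\C)\otimes C[0,1]\bigr)\oplus\C r_\alpha$ when $\mu(\alpha)>1$, where $r_\alpha=p_\alpha-q_\alpha$ has trace $\mu(\alpha)-1$. Any unitary of $C_1$ has the form $(U(t))_{t\in[0,1]}\oplus\lambda$ with $U(t)\in U(2)$ and $|\lambda|=1$, so the $p_\alpha$-component of $E_D(u_1)$ is $\mu(\alpha)^{-1}\bigl(\int_0^1 U_{22}(t)\,dt+\lambda(\mu(\alpha)-1)\bigr)$, which has modulus at least $\mu(\alpha)-2$. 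Hence for $\mu(\alpha)>2$ the algebra $C_1$ contains \emph{no} unitary with $E_D(u_1)=0$: your claimed $u_1$ ``built from the off-diagonal part of $M_2(\C)$'' does not exist (even for $1<\mu(\alpha)\le 2$ the naive off-diagonal choice fails because of the $r_\alpha$ corner), and condition (1) of Lemma \ref{lem:simpleamalgamated} fails outright, no matter how you assign the roles of $C_1$ and $C_2$. The same trace obstruction blocks condition (3) for the edge factor, and your verification of (2) and (3) for the path factor $P\cR P$ is a hope rather than an argument.

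The missing idea is to rebalance the traces \emph{before} invoking Lemma \ref{lem:simpleamalgamated}, and this is what the paper's proof does. Let $q\le p_\alpha$ be the support projection of $g_{\e'}g_{\e'}^*$; by Facts \ref{facts:edge}(5), when $\mu(\alpha)>1$ the element $g_{\e'}^*g_{\e'}$ is invertible in $p_\star\cS(\Gamma)p_\star$, so the polar part of $g_{\e'}$ lies in $C^*(g_{\e'})$ and $\Tr(q)=\Tr(p_\star)=1$ (when $\mu(\alpha)=1$ one has $q=p_\alpha$ and your decomposition coincides with the paper's). One then compresses by $p_\star+q$ rather than by $p_\star+p_\alpha$, and this is where Lemma \ref{lem:compressfree2} earns its keep: it is not merely a device for obtaining a free product decomposition, but rewrites $(p_\star+q)\cS(\Gamma)(p_\star+q)$ as an amalgamated free product over $D'=\C p_\star\oplus\C q$ in which the deficiency $p_\alpha-q$ has been absorbed into the \emph{second} factor, so that the first factor is a genuine $M_2(\C)\otimes C[0,1]$ (or the $f(0)$-diagonal variant when $q=p_\alpha$) with both diagonal projections of trace $1$. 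In that algebra the Haar unitary $V=\operatorname{diag}(e^{2\pi it},e^{2\pi it})$ and the rotation unitary $U(t)$ explicitly verify conditions (2) and (3), while Assumption \ref{assume:Cycle} supplies a second edge $f\ne\e'$ at $\star$, whence a Haar unitary in $p_\star C'p_\star$ plus a trace-zero unitary in $qC'q$ (an interpolated free group factor corner, by \cite{MR3110503}) gives the expectationless unitary needed for condition (1) in the other factor. Your final transfer steps (fullness of $p_\star$, Morita invariance of $K_0$, passage of unique trace to the corner via Lemma \ref{lem:fulluniquetrace}) are fine once this is in place, but without the support-projection trick the core of the argument does not go through.
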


\begin{proof}
Set $D = \overset{p_{\star}}{\underset{1}{\C}} \oplus \overset{p_{\alpha}}{\underset{\mu(v)}{\C}}$, and let $\e'$ be the edge in $E(\vec{\Gamma})$ affiliated to $\e$ which has $s(\e') = \alpha$ and $t(\e') = \star$.  Note that
$$
D\cS(\Gamma)D = C^{*}(p_{\star}, p_{\alpha}, g_{\e'}) \underset{D}{\Asterisk} D(\cS(\Gamma \setminus \{\e\}, \mu))D.
$$

Let $q \leq p_{\alpha}$ be the support projection of $g_{\e'}g_{\e'}^{*}$.
If the polar part of $g_{\e'}$ is not in $C^{*}(g_{\e'})$, then $\mu(\alpha) = 1$ and $q=p_{\alpha}$.
Otherwise, the polar part of $g_{\e'}$ is in $C^{*}(g_{\e'})$, so $q\in C^{*}(g_{\e'})$.
Lemma \ref{lem:compressfree2} implies
$$
( p_{\star} + q)\cS(\Gamma)( p_{\star} + q)
=
\cD \underset{D'}{\Asterisk} (p_{\star} + q)\left(\left(\overset{p_{\star}}{\underset{1}{\C}} \oplus \overset{q}{\underset{1}{\C}} \oplus \overset{p_{v} - q}{\underset{\mu(v) - 1}{\C}} \right) \underset{D}{\Asterisk} \cS(\Gamma \setminus \{\e\}, \mu) \right)(p_{\star} + q)
$$
where $D' = \overset{p_{\star}}{\C} \oplus \overset{q}{\C}$, and $\cD \cong M_{2} \otimes C[0, 1]$ if $q \neq p_{v}$ and 
$$
\cD \cong \{f : M_{2}(\C) \rightarrow [0, 1]: f \text{ is continuous and } f(0) \text{ is diagonal}\}
$$
if $q=p_{v}$. Recall that in either case, the trace on $B$ is $\tr_{M_{2}(\C)} \otimes d\lambda$ with $d\lambda$ integration with respect to Lebesgue measure.  We will show that $( p_{\star} + q)\cS(\Gamma)( p_{\star} + q)$ satisfies properties (1)-(3) of Lemma \ref{lem:simpleamalgamated}.

The projections $p_{\star}$ and $q$ are identified with the matrix units $e_{11}$ and $e_{22}$ of the copy of $M_{2}$ in $B$.
In either case, $B$ contains a Haar unitary with expectation zero, namely
$$
V = \left(\begin{array}{rr} e^{2\pi i t} & 0 \\ 0 & e^{2\pi i t}\end{array} \right).
$$
Let $C'$ be the second algebra in the free product expansion of $(p_{\star} + q)\cS(\Gamma)(p_{\star} + q)$, and let $f$ be an edge different from $\e'$ with $t(f) = \star$.
The continuous functional calculus applied to $g_{f}^{*}g_{f}$ produces a Haar unitary $u\in p_{\star}C'p_{\star}$.
Furthermore, the von Neumann algebra generated by $qC' q$ is an interpolated free group factor \cite{MR3110503}, so it follows that $qC' q$ contains a unitary $v$ of trace zero.
Therefore, $u + v$ is an expectationless unitary in $C'$, and we see that $(p_{\star} + q)\cS(\Gamma)(p_{\star} + q)$ satisfies (1) in Lemma \ref{lem:simpleamalgamated}.

To see (2), consider the unitary
$$
U = \left(\begin{array}{rr} \cos(2\pi t) & -\sin(2 \pi t) \\ \sin(2\pi t) & \cos(2 \pi t)\end{array} \right)\in B.
$$
(Note $U\in B$ regardless if $q = p_{v}$ or not.)
We observe that the traceless elements in $D'$ form a 1 dimensional subspace of $D'$ spanned by
$$
a =
\begin{pmatrix}
1 & 0 \\
0 & -1
\end{pmatrix}.
$$
Note that
$$
UaU^{*} =
\begin{pmatrix}
\cos^{2}(2\pi t) - \sin^{2}(2 \pi t) & 2\cos(2\pi t)\sin(2 \pi t) \\
2\cos(2\pi t)\sin(2 \pi t)  & \sin^{2}(2\pi t) - \cos^{2}(2 \pi t)
\end{pmatrix}
$$
which is expectationless, and thus (2) in Lemma \ref{lem:simpleamalgamated} holds.

If $b \in D'$, then $b$ is a diagonal matrix with $b_{1}$ and $b_{2}$ on the diagonal, and $VbV$ is easily seen to have expectation 0.  Thus (3) holds, so by Lemma \ref{lem:simpleamalgamated}, we have simplicity of $(p_{\star} + q)\cS(\Gamma, \mu)(p_{\star} + q)$ and hence $\cA_{0}=p_\star \cS(\Gamma, \mu)p_\star$, which is a full corner.

Finally, we have also shown that $(p_{\star} + q)\cS(\Gamma)(p_{\star} + q)$ has unique trace, and simplicity implies $[q]$ represents an element in $K_{0}(\cA_{0})$.
This implies that $(p_{\star} + q)\cS(\Gamma)(p_{\star} + q)$ is isomorphic to a compression of $M_{n}(\cA_{0})$ for some $n$.
Therefore, $M_{n}(\cA_{0})$ has unique trace by Lemma \ref{lem:fulluniquetrace} below, implying $\cA_{0}$ has unique trace.

Note that $q$  is full in $p_{\alpha}\cS(\Gamma \setminus \{\e\})p_{\alpha}$, so for each vertex $\beta$ of $\Gamma$, $[p_{\beta}]$ is represented in the $K_{0}$ group of at least one of the algebras in the free product expansion of $(p_{\star} + q)\cS(\Gamma)(p_{\star} + q)$.  Since $\cA_{0}$ is a full corner of $( p_{\star} + q)\cS(\Gamma)( p_{\star} + q)$, we have
\begin{equation*}
K_{0}(\cA_{0})^{+} = \{x \in K_{0}(\cA_{0}): \tr(x) > 0\} \cup \{0\}. \qedhere
\end{equation*}
\end{proof}

By Theorems \ref{thm:SimpleTrace1} and \ref{thm:SimpleTrace2}, $\cA_0$ is always simple with unique trace.
By Proposition \ref{prop:Simple} and Corollary \ref{cor:allME}, $\cA_k$ is always simple for all $k>0$.

We now show $\cA_k$ has unique trace for all $k>0$.
We use the following lemma, which comes from \cite{MR1473439}.
We provide a short proof for the reader's convenience.

\begin{lem}\label{lem:fulluniquetrace}
Suppose $B$ is a unital $C^*$-algebra, $p\in B$ is a full projection ($BpB=B$), and $C=pBp$.
If $C$ has unique tracial state $\tr_C$, then $B$ has at most one tracial state.
\end{lem}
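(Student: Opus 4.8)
The plan is to show that any tracial state $\tau$ on $B$ is completely determined by the single scalar $\tau(p)$ together with the fixed unique trace $\tr_C$ on $C=pBp$, and then to pin down $\tau(p)$ using the normalization $\tau(1)=1$. Fullness of $p$ enters precisely through the density of $BpB$ in $B$.

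First I would check that every tracial state $\tau$ on $B$ satisfies $\tau(p)>0$. If instead $\tau(p)=0$, then since $p=p^2$ the Cauchy--Schwarz inequality for the trace gives
$$
|\tau(pc)|^2=|\tau(p\cdot pc)|^2\le \tau(p)\,\tau(c^*pc)=0
$$
for all $c\in B$, so $\tau(pc)=0$. By traciality $\tau(apb)=\tau(p(ba))=0$, hence $\tau$ vanishes on the dense subspace $BpB$, and by continuity $\tau=0$ on $B$, contradicting $\tau(1)=1$.

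Next, since $\tau(p)>0$, the normalized restriction $x\mapsto \tau(x)/\tau(p)$ is a tracial state on $C$ (recall $p$ is the unit of $C$), so by hypothesis it equals $\tr_C$; thus $\tau(x)=\tau(p)\,\tr_C(x)$ for all $x\in C$. The key algebraic identity is $\tau(pc)=\tau(pcp)$, which follows from $\tau(p\cdot pc)=\tau(pc\cdot p)$ by cyclicity. Combining this with traciality, for all $a,b\in B$ I get
$$
\tau(apb)=\tau(pba)=\tau(pbap)=\tau(p)\,\tr_C(pbap),
$$
where the last equality uses $pbap\in pBp=C$. Consequently $\tau$ is determined on the dense subalgebra $BpB$ — and hence, by continuity, on all of $B$ — by the number $\tau(p)$ and the fixed functional $\tr_C$.

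Finally, given two tracial states $\tau_1,\tau_2$, the functionals $\phi_j=\tau_j/\tau_j(p)$ are bounded (positive functionals on a unital $C^*$-algebra are automatically bounded), agree with $\tr_C$ on $C$, and therefore, by the displayed formula whose right-hand side is independent of $j$, agree on every generator $apb$ and hence on the dense set $BpB$; thus $\phi_1=\phi_2$ on $B$. Evaluating at $1$ gives $1/\tau_1(p)=1/\tau_2(p)$, so $\tau_1(p)=\tau_2(p)$, and then $\tau_1=\tau_2$. The only genuinely load-bearing ingredient is the fullness of $p$, which is what makes $BpB$ dense and so lets the identity for $\tau$ on the corner $C$ propagate to all of $B$; the rest is the cyclic-trace manipulation and routine continuity, so I expect no serious obstacle beyond bookkeeping.
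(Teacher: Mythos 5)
Your proof is correct and follows essentially the same route as the paper: cyclicity of the trace moves $apb$ into the corner $pBp$, uniqueness of $\tr_C$ pins down the values there, fullness of $p$ propagates this to all of $B$, and evaluation at $1$ fixes the normalization. The only differences are cosmetic: the paper uses the algebraic identity $BpB=B$ to write each element exactly as a finite sum $\sum_j x_j p y_j$ (so no density/continuity argument is needed), and your Cauchy--Schwarz argument for $\tau(p)>0$ is replaced there by the observation that $\tr_i|_C=\lambda_i\tr_C$ with $\lambda_i=0$ would force $\tr_i\equiv 0$, contradicting $\tr_i(1)=1$.
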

\begin{proof}
Suppose $\tr_1,\tr_2$ are two tracial states on $B$.
Since $\tr_C$ is the unique tracial state on $C$, for $i=1,2$, $\tr_i|_C=\lambda_i\tr_C$ for some $\lambda_i\geq 0$.
Then if $b\in B$, we can write $b=\sum_{j=1}^n x_j p y_j$, so for $i=1,2$,
$$
\tr_i(b)
=
\sum_{j=1}^n\tr_i( x_j p y_j)
=
\sum_{j=1}^n\tr_i( py_j x_j p)
=
\lambda_i \sum_{j=1}^n\tr_C( py_j x_j p).
$$
Since $\tr_i$ is a state, $\lambda_i>0$ for $i=1,2$, so $\tr_1$ is proportional to $\tr_2$.
Since $\tr_1(1)=\tr_2(1)$, $\tr_1=\tr_2$.
\end{proof}

Note that the polar part of
$
\begin{tikzpicture}[baseline=-.1cm]
    \draw[thick] (-.4, -.4) rectangle (.4, .4);
    \draw (-.4, 0) arc (-90:0:.4cm);
    \node at (0, 0) {\scriptsize{$k$}};
\end{tikzpicture}
$
is in $A_{\infty}$ since
$
\begin{tikzpicture}[baseline=-.1cm]
    \draw[thick] (-.4, -.4) rectangle (.4, .4);
    \draw (-.2, .4) arc (-180:0:.2cm);
    \node at (0, 0) {\scriptsize{$k$}};
\end{tikzpicture}
$ 
is invertible in $\cA_{0}$ using the graded convention by  Fact \ref{fact:CupInvertible}.  
Therefore $\cA_{0} \cong p\cA_{k}p$ for some projection $p \in \cA_{k}$.  As $\cA_{k}$ is simple, $p$ is full in $\cA_k$.
Since each of the $\cA_k$ comes equipped with a tracial state, we conclude the following.

\begin{cor}
$\cA_k$ has unique trace for all $k\geq 0$.
\end{cor}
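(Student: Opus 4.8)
The plan is to reduce the remaining cases $k>0$ to the already-established case $k=0$ by realizing $\cA_0$ as a full corner of $\cA_k$ and then transporting uniqueness of the trace \emph{upward} via Lemma \ref{lem:fulluniquetrace}. By Theorems \ref{thm:SimpleTrace1} and \ref{thm:SimpleTrace2} we know $\cA_0$ is simple with a unique tracial state, so it suffices, for each $k>0$, to produce a projection $p\in\cA_k$ with $p\cA_k p\cong\cA_0$ and with $p$ full: then since $\cA_k$ already carries the tracial state induced by $\tau_k$, Lemma \ref{lem:fulluniquetrace} will force any two traces on $\cA_k$ to coincide, giving exactly one.

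To build this corner I would work inside $\cA_\I$, using $\cA_0=1_0\cA_\I 1_0$ and $\cA_k=1_k\cA_\I 1_k$. Consider the diagrammatic element $c\in 1_0\cA_\I 1_k$ that caps off the $k$ strands (the first picture displayed before the statement). A direct diagram computation identifies the appropriate product $cc^*\in\cA_0$ with a cabled cup element, which by Fact \ref{fact:CupInvertible} is positive and invertible in $\cA_0$. Setting $w=(cc^*)^{-1/2}c$, one gets a partial isometry in $\cA_\I$ with $ww^*=1_0$ and $p:=w^*w$ a projection satisfying $p\le 1_k$. Conjugation $x\mapsto w^*xw$ then implements an isomorphism $\cA_0=1_0\cA_\I 1_0\cong p\cA_\I p=p\cA_k p$, the last equality holding because $p\le 1_k$.

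Finally, simplicity of $\cA_k$—which holds for every $k$ by Proposition \ref{prop:Simple} together with Corollary \ref{cor:allME}—ensures that the nonzero projection $p$ is full, i.e. $\cA_k p\cA_k=\cA_k$. Applying Lemma \ref{lem:fulluniquetrace} with $B=\cA_k$ and $C=p\cA_k p\cong\cA_0$ shows that $\cA_k$ has at most one tracial state, and combining this with the existing trace yields exactly one, as desired.

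I expect the only genuinely delicate point to be the claim that the partial isometry $w$ already lies in the $C^*$-algebra $\cA_\I$ (rather than merely in the enveloping von Neumann algebra $\cM_\I$). This is precisely where the spectral-permanence invertibility of Fact \ref{fact:CupInvertible} is essential: invertibility of $cc^*$ in the $C^*$-algebra $\cA_0$ lets one write $w=(cc^*)^{-1/2}c$ with both factors in $\cA_\I$. The remaining steps—matching $cc^*$ with a cabled cup and verifying $p\cA_\I p=p\cA_k p$—are routine diagrammatic bookkeeping.
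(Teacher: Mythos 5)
Your proposal is correct and is essentially the paper's own argument: the paper likewise observes that the polar part of the capping diagram lies in $\cA_\I$ (not merely $\cM_\I$) precisely because the cabled cup is invertible in $\cA_0$ by Fact \ref{fact:CupInvertible}, obtains $\cA_0\cong p\cA_kp$ for a projection $p\in\cA_k$, uses simplicity of $\cA_k$ (via Proposition \ref{prop:Simple} and Corollary \ref{cor:allME}) to see $p$ is full, and concludes with Lemma \ref{lem:fulluniquetrace} together with the existence of the canonical tracial state. Your write-up merely makes explicit the partial isometry $w=(cc^*)^{-1/2}c$ that the paper leaves implicit in the phrase ``polar part.''
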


\subsection{Stable rank one}
In this section, we will show $\cA_{0}$ has stable rank 1, and use this to deduce some statements about of comparability of projections in $\cA_{\infty}$.  Of course, Theorem \ref{thm:AvitzourSR} already implies $\cA_{0}$ has stable rank 1 if $\Gamma$ is as in Theorem \ref{thm:SimpleTrace1}.  We will need to eastiblsh this fact if $\Gamma$ satisfies the Assumpton \ref{assume:Cycle}.  We begin with the theorem that will allow us to do just that.

\begin{thm} \label{thm:Amalg.sr1}
Suppose that $B_{1}$ and $B_{2}$ are unital separable $C^{*}$ algebras both containing $D = \C^{2}$ as a subalgebra.  Assume that $B_{1}$ and $B_{2}$ are equipped with faithful traces $\tr_{1}$ and $\tr_{2}$ respectively such that $\tr_{1}$ and $\tr_{2}$ agree on $D$, and there exist trace preserving conditional expectations $E^{i}_{D}$ from $B_{i}$ to $D$.  Form the reduced amalgamated free product
$$
(B, E) = (B_{1}, E^{1}_{D}) \underset{D}{\Asterisk} (B_{2}, E^{2}_{D}).
$$
Let $p$ and $q$ be the two minimal projections in $D$, and assume $\tr_{i}$ is non-normalized and has the property $\tr_{i}(p) = 1 = \tr_{i}(q)$.  Suppose $B_{1}$ contains a unitary $u_{1}$ and $B_{2}$ contains unitaries $u_{2}$ and $u_{2}'$ with $v= pvp + qvq$ for $v \in \{u_{1}, u_{2}, u_{2}'\}$ which also satisfy
$$
E(u_{1}) = 0 = E(u_{2}) = E(u_{2}') = E(u_{2}^{*}u_{2}').
$$
Then $pB p$ and $qB q$ both have stable rank 1.
\end{thm}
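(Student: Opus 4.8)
The plan is to reduce, by the evident symmetry of the hypotheses under interchanging $p$ and $q$ (the conditions $\tr_i(p)=\tr_i(q)=1$, $v=pvp+qvq$, and $E(u_1)=0=E(u_2)=E(u_2')=E(u_2^*u_2')$ are all symmetric in $p,q$), to showing that $pBp$ has stable rank $1$, and then to realize $pBp$ as a reduced free product over the scalars satisfying the Avitzour condition so that Theorem~\ref{thm:AvitzourSR} applies. The first observation is that compressing by the minimal projection $p\in D$ turns the $D$-valued conditional expectation $E$ into a scalar-valued one: $E_p=pE(\cdot)p$ is a conditional expectation of $pBp$ onto $\C p\cong\C$, and since $\tr(p)=1$ the restriction $\tr_p=\tr|_{pBp}$ is a faithful tracial state with $\tr_p=\tr_p\circ E_p$. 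Thus, after compression, the amalgamation over $\C^2$ becomes an honest scalar trace, which is exactly the setting of Theorem~\ref{thm:AvitzourSR}.

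Next I would extract the Avitzour data in the corner. Because each $v\in\{u_1,u_2,u_2'\}$ is block diagonal, the compressions $a_1=pu_1p\in pB_1p$ and $a_2=pu_2p$, $a_2'=pu_2'p\in pB_2p$ are genuine unitaries in the corner algebras, and the trace conditions descend to $\tr_p(a_1)=\tr_p(a_2)=\tr_p(a_2')=\tr_p(a_2^*a_2')=0$. Moreover a short computation with reduced words shows that $pB_1p$ and $pB_2p$ are free inside $(pBp,E_p)$: if $x\in pB_ip$ has $E_p(x)=0$ then $x\in B_i^{\circ}=\ker E^i_D$, so an alternating product of such elements is a reduced word in $B$ and hence killed by $E$. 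Consequently the unital subalgebra generated by $pB_1p$ and $pB_2p$ is the reduced free product $(pB_1p)\Asterisk(pB_2p)$, and $a_1$ together with $a_2,a_2'$ witness the Avitzour condition for it, so this subalgebra already has stable rank $1$ by Theorem~\ref{thm:AvitzourSR}.

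The remaining, and principal, difficulty is that $pBp$ is in general strictly larger than $(pB_1p)\Asterisk(pB_2p)$: it also contains the ``excursion'' words $p\,b_1 c_1 b_2\cdots p$ whose intermediate routing passes through $q$, built from the off-diagonal parts of $B_1$ and $B_2$. To finish I would realize all of $pBp$ as a single reduced scalar free product for which the Avitzour condition still holds, using a compression analysis of the $\C^2$-amalgamated free product in the spirit of Lemma~\ref{lem:compressfree2} and \cite{MR1201693,MR1473439}; here the balanced condition $\tr(p)=\tr(q)=1$ is essential, since it is what permits the excursion contributions to be organized into free factors compatibly with the two diagonal corners. With such a decomposition in hand, the unitaries $a_1$ and $a_2,a_2'$ (sitting in the free factors coming from $pB_1p$ and $pB_2p$) still satisfy the Avitzour condition for the full corner, Theorem~\ref{thm:AvitzourSR} yields stable rank $1$ for $pBp$, and interchanging $p$ and $q$ gives the same for $qBq$.

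I expect the hardest step to be this last one: controlling the excursion terms and verifying that the Avitzour witnesses survive in a genuine scalar free-product description of the whole corner. The obstruction is that the off-diagonal parts of $B_1$ and $B_2$ are correlated with the diagonal corners $pB_ip$ and so cannot simply be adjoined as an extra free factor; overcoming this is precisely where one must go beyond the scalar-amalgamated results of \cite{MR1478545} and re-run the underlying invertible-approximation argument in the $\C^2$-amalgamated setting.
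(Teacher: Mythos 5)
Your opening reductions are sound: compressing $E$ by the minimal projection $p$ does give a conditional expectation onto $\C p$ with $\tr_p$ a faithful tracial state, the block-diagonal hypothesis $v=pvp+qvq$ does make $pu_1p$, $pu_2p$, $pu_2'p$ unitaries in the corners, and your freeness computation (using $E(pxp)=pE(x)p$) correctly shows that $C^*(pB_1p\cup pB_2p)$ is a reduced scalar free product satisfying the Avitzour condition, hence has stable rank 1 by Theorem~\ref{thm:AvitzourSR}. But, as you yourself note, this algebra is in general a \emph{proper} subalgebra of $pBp$, and stable rank 1 does not pass from a subalgebra to an ambient algebra. The entire content of the theorem therefore lies in the ``excursion'' words you defer. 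The step where you would ``realize all of $pBp$ as a single reduced scalar free product for which the Avitzour condition still holds'' is never proved, and there is no reason to expect such a decomposition to exist: Lemma~\ref{lem:compressfree2} (and its von Neumann analogue in \cite{MR1201693}) applies only when one free factor splits as a direct sum with a scalar summand, with the compressing projection dominating the unit of the other summand; in the general setting of the theorem the off-diagonal corners $pB_iq$, $qB_ip$ admit no such splitting, and the words routing through $q$ genuinely entangle $pB_1p$ with $pB_2p$ rather than assembling into an extra free factor. Your closing sentence concedes exactly this: the fix you propose is to ``re-run the underlying invertible-approximation argument in the $\C^2$-amalgamated setting,'' which is not a remaining detail but is precisely the proof, and you have not carried it out.

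For comparison, the paper never attempts a reduction to scalar free products. It redoes the Dykema--Haagerup--R{\o}rdam argument of \cite{MR1478545} directly over $D=\C^2$: using the balanced condition $\tr(p)=\tr(q)=1$, it builds orthonormal bases of each $B_i$ adapted to the four corners $pB_ip$, $pB_iq$, $qB_ip$, $qB_iq$; shows that alternating words are either $0$ or of $2$-norm one and mutually orthogonal; proves the norm estimate $\|y\|_\infty \leq (2k+1)^{3/2}K(y)\|y\|_2$ for $y$ in the span of words of length at most $k$; and uses the block-diagonal unitaries $u_1,u_2,u_2'$ to manufacture unitaries $u,w$ with $\|(uyw)^n\|_2\leq\|uyw\|_2^n$ and $K((uyw)^n)$ bounded. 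This yields $\dist(y,GL(B))\leq\|y\|_2$ on a dense subspace, and stable rank 1 of the corners $pBp$ and $qBq$ then follows from the characterization of non-stable-rank-1 algebras in \cite{MR951510}. Until you either carry out an argument of this type or actually prove the structural free-product decomposition you posit, your proposal has a gap at its main step.
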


The proof follows from the arguments given in \cite{MR1478545}.  We will sketch how to translate these arguments to the case of amalgamation over $\C^{2}$, and we will borrow the notation from \cite{MR1478545}.

Let $\Lambda^{0}(B_{1}, B_{2})$ the set of alternating words of expectationless elements in $B_{1}$ and $B_{2}$, and note that $D + \spann(\Lambda^{0}(B_{1}, B_{2}))$ is dense in $B$.  Observe that for $i \in \{1, 2\}$, the subspaces
$$
pB_{i}p,\, pB_{i}q,\, qB_{i}p, \, \text{and } qB_{i}q
$$
are mutually orthogonal under the inner product given by $\langle x, y \rangle = \tr_{i}(y^{*}x)$, and under $E^{i}_{D}$.  For each $B_{i}$, we can form an orthonormal basis $X_{i} = \{p, q \} \cup \{x_{j}^{i}: j \geq 1\}$ so that each $x_{j}^{i}$ is in one of the subspaces above for all $j$.  Following along the lines of Lemma 2.1 of \cite{MR1478545}, we may also assume that $\spann(X_{i})$ is a dense $*$-subalgebra of $B_{i}$ for $i \in \{1, 2\}$.  Note that the requirement on the $x_{i}^{j}$ forces the elements in $X_{i}$ to be orthogonal under $E^{i}_{D}$.

Borrowing the notation of \cite{MR1478545}, $Y_{0} = \{p, q\}$ and $Y_{n}$ is the subset of alternating words in the $x_{j}^{1}$ and $x_{j}^{2}$. The fact that the $x_{j}^{i}$ are supported on either side by $p$ or $q$ and the condition $\tr(p) = \tr(q) = 1$ imply that if $w$ is a word in $Y_{n}$, then either $w = 0$ or $\| w \|_{2} = 1$.  Furthermore, distinct words in $Y_{n}$ have to be orthogonal under $E_{D}$ and also under $\tr$.

Define $Y = \cup_{n=0}^{\infty} Y_{n}$ and note that $Y$ is an orthonormal basis for $L^2(B,\tr)$ and spans a dense $*$ subalgebra of $B$.  Furthermore, note that $Y$ is the basis $X_{1} \underset{D}{*} X_{2}$.  We will try to establish a relation between $\|x\|_{2} (= \sqrt{\tr(x^{*}x)})$ and $\|x\|$ for $x \in \spann(Y_{k})$.  To start, we let $E_{k}$ be the orthogonal projection from $\spann(Y)$ to $\spann(Y_{k})$.

Lemma 3.1 from \cite{MR1478545} can be translated to our case with a slight modification.  Specifically, we have the following:
\begin{lem}
Let $v \in Y_{k}$ and $w \in Y_{l}$ and $n \geq 0$ be fixed.

\be

\item  Assume $|k-l| < n \leq k+l$.  Let $q$ be the integer satisfying $k + l - n = 2q$ or $k + l - n = 2q + 1$.  Set
\begin{align*}
&v = v_{1}xv_{2} \, \, \, \, \, v_{1} \in Y_{k-q-1} \, \, \, \, \, x \in X_{i}^{0} \, \, \, \, \, v_{2} \in Y_{q}\\
&w = w_{2}yw_{1} \, \, \, \,  w_{1} \in Y_{l - q - 1} \, \, \, \, \, y \in X_{j}^{0} \, \, \, \, \, w_{2} \in Y_{q} 
\end{align*}
Then
$$
E_{n}(vw) = \begin{cases}
\delta_{i,j} \sum_{u \in X_{i}^{0}} \tr(v_{2}w_{2})\cdot \tr(u^{*}xy)v_{1}uw_{1} &\text{ if $n$ is odd}\\
(1-\delta_{i,j}) \tr(v_{2}w_{2})v_{1}xyw_{1} &\text{ if $n$ is even.}\\
\end{cases}
$$

\item 
Assume $n = |k-l|$.  If $k - l \neq 0$, set $q = \min\{k, l\}$ meaning $k + l - n = 2q$.  Write
\begin{align*}
v &= v_{1}v_{2}, \, \, \, \, \, v_{1} \in Y_{k-q}, \, \, \, \, \, v_{2} \in Y_{q}\\
w &= w_{2}w_{1}, \, \, \, \, w_{1} \in Y_{l-q}, \, \, \, \, \, w_{2} \in Y_{q}.
\end{align*}
Note that $v_{1}$ or $w_{1}$ is in $\{p, q\}$, and $E_{n}(vw) = \tr(v_{2}w_{2})v_{1}w_{1}$.

If $k = l$, then $n = 0$, and
$$
E_{0}(vw) = \begin{cases} 0 &\text{ if } v \neq w^{*} \\
p &\text{ if } v = w^{*} \text{ and } pv = v\\
q &\text{ if } v = w^{*} \text{ and } qv = v. \end{cases}
$$

\item If $n < |k-l|$ or $n > k+l$, then $E_{n}(vw) = 0$.
\ee
\end{lem}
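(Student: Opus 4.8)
The plan is to follow the proof of Lemma 3.1 in \cite{MR1478545}, adapting the bookkeeping to track amalgamation over $D = \C^2$ rather than over the scalars. The starting point is the $D$-bimodule decomposition $B_i = D \oplus B_i^0$, where $B_i^0 = \ker E^i_D$, together with the fact built into the construction of $X_i$ that every letter in $X_i \setminus \{p,q\}$ lives in one of the four corners $e B_i^0 f$ with $e,f \in \{p,q\}$. Thus each word in $Y_n$ is an alternating product $a_1 \cdots a_n$ of expectationless letters with compatible corner supports, and the orthonormality of $Y$ in $L^2(B,\tr)$ reduces to orthogonality of letters under the $E^i_D$ together with the normalization $\tr(p) = \tr(q) = 1$. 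First I would recall the reduction algorithm for a product $v \cdot w$ of reduced words: writing $v = a_1\cdots a_k$ and $w = b_1\cdots b_l$, reduction can occur only at the interface, where if $a_k,b_1$ lie in the same $B_i$ one writes $a_k b_1 = E^i_D(a_k b_1) + (a_k b_1)^0$ and pushes the $D$-valued part outward; since $D = \C p \oplus \C q$, this part acts diagonally on the neighbouring letters, either annihilating the word or passing through as a scalar on each corner. Iterating records how the length drops, each fully contracted pair contributing a scalar, and $E_n$ selects the grade-$n$ component.

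The core computation is case (1). I would split $v = v_1 x v_2$ and $w = w_2 y w_1$ so that $v_2, w_2$ (each of length $q$) are the parts that contract completely and $x,y$ are the surviving junction letters. Contracting $v_2$ against $w_2$ yields the scalar $\tr(v_2 w_2)$, nonzero only when the two match as adjoints in the appropriate corners, by orthonormality of $Y$. The junction letters then meet: according to whether $k+l-n$ is even or odd, either they lie in different algebras ($i \neq j$) and cannot reduce, giving a genuine length-two block and the formula $(1-\delta_{i,j})\,\tr(v_2 w_2)\, v_1 x y w_1$; or they lie in the same algebra ($i=j$), so that $xy = E^i_D(xy) + (xy)^0$ with the $D$-part producing a strictly shorter, lower-grade word invisible to $E_n$, and with $(xy)^0 = \sum_{u \in X_i^0} \tr(u^* xy)\,u$ expanded in the orthonormal basis, yielding the remaining formula. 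This also pins down the relation $k+l-n = 2q$ or $2q+1$ according to which case occurs.

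Finally I would treat the boundary case $n = |k-l|$, where the shorter word is completely absorbed so that one of $v_1, w_1$ is a bare projection and $E_n(vw) = \tr(v_2 w_2)\, v_1 w_1$; the subcase $k = l$, $n = 0$, where complete cancellation forces $v = w^*$ and leaves the supporting projection $p$ or $q$; and the trivial vanishing case (3) from the length bounds $|k-l| \leq n \leq k+l$. The main obstacle is the $\C^2$-bookkeeping: one must verify that the interface expectations $E^i_D(\cdot)$, which land in $D = \C p \oplus \C q$, act diagonally and never mix the $p$ and $q$ corners, so that the scalar coefficients emerge exactly as $\tr(v_2 w_2)$ and $\tr(u^* xy)$ with no cross terms. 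This is precisely where the corner-support structure of the $x_j^i$ and the normalization $\tr(p) = \tr(q) = 1$ are used, and it is the only substantive departure from the scalar-amalgamated argument of \cite{MR1478545}.
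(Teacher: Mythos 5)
Your proposal is correct and follows essentially the same route as the paper, which gives no independent argument but simply translates Lemma 3.1 of \cite{MR1478545} to amalgamation over $D=\C^2$; your corner-support bookkeeping and your emphasis on the normalization $\tr(p)=\tr(q)=1$ (so that the $D$-valued interface expectations pass outward as honest scalars, possibly annihilating a word) is exactly the ``slight modification'' the paper alludes to. One point in your favor: your branching on the parity of $k+l-n$ (even giving the $(1-\delta_{i,j})\,\tr(v_2w_2)\,v_1xyw_1$ term, odd giving the $\delta_{i,j}$ fusion term) is the correct condition, since the resulting word lengths are $k+l-2q$ and $k+l-2q-1$ respectively; the lemma's phrasing ``if $n$ is odd/even'' is consistent with this only when $k+l$ is even and should be read as referring to the parity of $k+l-n$.
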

Given $y \in \spann(Y)$ we write $y = \sum_{i}a_{i}y_{i}$ for $a_{i} \in \C$ and $y_{i} \in Y$, and note that all but finitely many $a_{i}$ are zero.  For $i \in \{1, 2\}$, define $F_{i}(y)$ to be the set of $x \in X_{i}$ where $x$ appears in a $y_{i}$ with a nonzero coefficient.  We now state the analogue of Lemma 3.3 in \cite{MR1478545} for our case, and the proof will function exactly the same as in that paper, with the only difference being that some of the newly formed words in $ab$ could be zero.

\begin{lem} Given $y \in Y$, set
$$
K(y) = \max_{i \in \{1, 2\}}\left(\sum_{x \in F_{i}(a)} \|x\|_{\I}^{2} \right)^{1/2}.
$$
Suppose $a \in Y_{k}$ and $b \in Y_{l}$.  Then
$$
\|E_{n}(ab)\|_{2} \leq 
\begin{cases} 
\|a\|_{2}\|b\|_{2} & \text{ if } k+l-n \text{ is even } \\
K(a)\|a\|_{2}\|b\|_{2} & \text{ if } k+l-n \text{ is odd.}  
\end{cases}
$$
\end{lem}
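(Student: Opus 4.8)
The plan is to work in the orthonormal basis $Y=\bigcup_m Y_m$ of $L^2(B,\tr)$ and to read the coefficients of $E_n(ab)$ directly off the preceding lemma. Expanding $a=\sum_{v\in Y_k}a_v\,v$ and $b=\sum_{w\in Y_l}b_w\,w$, bilinearity gives $E_n(ab)=\sum_{v,w}a_vb_w\,E_n(vw)$, and orthonormality of $Y_n$ gives $\|E_n(ab)\|_2^2=\sum_{z\in Y_n}|c_z|^2$, where $c_z$ is the coefficient of the word $z$. The ranges $n<|k-l|$ and $n>k+l$ force $E_n(vw)=0$, so the content lies in the two surviving cases of the preceding lemma, and these correspond precisely to the parity of $k+l-n$: the ``concatenation'' case when $k+l-n$ is even, and the ``internal contraction'' case when $k+l-n$ is odd.

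First I would dispose of the even case $k+l-n=2q$ (which also covers $n=|k-l|$, including $k=l,\,n=0$). Here $E_n(vw)$ is a scalar multiple of the single word $v_1xyw_1$: the middle blocks $v_2,w_2\in Y_q$ cancel through $\tr(v_2w_2)=\delta_{v_2^*,w_2}$, and the surviving letters $x,y$ come from different algebras and simply concatenate. Cutting an output word $z$ at the fixed position $k-q$ as $z=z_Lz_R$ with $z_L\in Y_{k-q}$ and $z_R\in Y_{l-q}$, the coefficient is $c_z=\sum_{v_2\in Y_q}a_{z_Lv_2}\,b_{v_2^*z_R}$. Cauchy--Schwarz in $v_2$, followed by summation over the independent indices $z_L,z_R$ and the unique factorizations of words in $Y_k$ as $z_Lv_2$ and of words in $Y_l$ as $v_2^*z_R$, gives $\|E_n(ab)\|_2^2\le\|a\|_2^2\|b\|_2^2$; no constant is required.

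The real work is the odd case $k+l-n=2q+1$, where $K(a)$ appears. Now $E_n(vw)$ carries the internal contraction $\sum_{u\in X_i^0}\tr(u^*xy)\,v_1uw_1$, the expectationless part of the product $xy$ inside a single $B_i$. For fixed $v_1,v_2$ I would package the middle letters of $a$ into $\xi_{v_1,v_2}=\sum_x a_{v_1xv_2}\,x\in B_i$ and those of $b$ into $\eta_{v_2,w_1}=\sum_y b_{v_2^*yw_1}\,y\in B_i$, so that for $z=v_1uw_1$ the coefficient becomes $c_z=\tr\!\big(u^*\,\zeta_{v_1,w_1}\big)$ with $\zeta_{v_1,w_1}=\sum_{v_2}\xi_{v_1,v_2}\,\eta_{v_2,w_1}$. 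Bessel's inequality over $u\in X_i^0$ gives $\sum_u|c_z|^2\le\|\zeta_{v_1,w_1}\|_2^2$, and viewing $(\xi_{v_1,v_2})_{v_2}$ as a row of operators acting on the column vector $(\eta_{v_2,w_1})_{v_2}$ yields $\|\zeta_{v_1,w_1}\|_2^2\le\big\|\sum_{v_2}\xi_{v_1,v_2}\xi_{v_1,v_2}^*\big\|_{\I}\,\sum_{v_2}\|\eta_{v_2,w_1}\|_2^2$. Summing over all $v_1$ and $w_1$, the $w_1$-summation collapses the second factor to $\|b\|_2^2$, and the estimate reduces to proving $\sum_{v_1}\big\|\sum_{v_2}\xi_{v_1,v_2}\xi_{v_1,v_2}^*\big\|_{\I}\le K(a)^2\|a\|_2^2$.

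This final inequality is the crux and the one point where the operator norm, rather than the $2$-norm, is essential. I would bound $\big\|\sum_{v_2}\xi_{v_1,v_2}\xi_{v_1,v_2}^*\big\|_{\I}\le\sum_{v_2}\|\xi_{v_1,v_2}\|_{\I}^2$ and then apply Cauchy--Schwarz to the defining sum of each $\xi_{v_1,v_2}$: $\|\xi_{v_1,v_2}\|_{\I}\le\sum_x|a_{v_1xv_2}|\,\|x\|_{\I}\le\big(\sum_x|a_{v_1xv_2}|^2\big)^{1/2}\big(\sum_x\|x\|_{\I}^2\big)^{1/2}$, the letter-sum being at most $K(a)^2$ by definition of $K$ and $F_i$. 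Hence $\|\xi_{v_1,v_2}\|_{\I}^2\le K(a)^2\|\xi_{v_1,v_2}\|_2^2$, and summing over $v_1,v_2$ via the factorization of $Y_k$ as $v_1xv_2$ produces exactly $K(a)^2\|a\|_2^2$. The main obstacle, then, is entirely in this odd case: one must trade the clean $\ell^2$ pairing of the even case for an operator-norm estimate in order to absorb the internal contraction, and it is the Cauchy--Schwarz split of $\|\xi_{v_1,v_2}\|_{\I}$ into an $a$-coefficient part and the weight $K(a)$ that forces the constant $1$ to be replaced by $K(a)$ precisely when $k+l-n$ is odd.
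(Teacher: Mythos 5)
Your proof is correct and is essentially the paper's own argument: the paper gives no independent proof here but defers to Lemma 3.3 of \cite{MR1478545}, whose proof is exactly your parity split --- Cauchy--Schwarz over the cancelling middle blocks in the even (concatenation) case, and Bessel plus the row-operator norm bound with the letter sums absorbed into $K(a)$ in the odd (contraction) case --- together with the observation that newly formed words which vanish only improve the bounds. One small repair: since the bases $X_i$ need not be closed under adjoints, $v_2^*$ need not be a basis word, so $\tr(v_2w_2)$ is not literally $\delta_{v_2^*,w_2}$ and coefficients such as $b_{v_2^*yw_1}$ are not defined; instead keep the double sum over $(v_2,w_2)\in Y_q\times Y_q$ weighted by the matrix $\left[\tr(v_2w_2)\right]$, which is the Gram matrix of the two orthonormal families $\{w_2\}$ and $\{v_2^*\}$ and hence a contraction on $\ell^2$, after which every one of your estimates goes through unchanged.
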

Notice that the possibility of some of the newly formed words in $ab$ being zero does not effect this bound.  This lemma implies, via the same arguments as in Lemmas 3.4 and 3.5 of \cite{MR1478545}, the following lemma:

\begin{lem}
For each $y \in \spann(\cup_{n=0}^{k}Y_{n})$, $\|y\|_{\infty} \leq (2k+1)^{3/2}K(y)\|y\|_{2}$.
\end{lem}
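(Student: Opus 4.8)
The plan is to identify $\|y\|_{\infty}$ with the operator norm of the left-multiplication operator $L_y$ on $L^{2}(B,\tr)$ (legitimate since $\tr = \tr_{1}\circ E = \tr_{2}\circ E$ is faithful, so the GNS representation is faithful), to split $y$ into its homogeneous pieces, and then to control $L_y$ by the amount it changes word-length, feeding in the previous ($L^2$) lemma at each step. The work divides into the analogue of Lemma~3.4 of \cite{MR1478545} (extending the word estimate to homogeneous elements) and the analogue of Lemma~3.5 (assembling the operator-norm bound).

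First I would upgrade the previous lemma, which is stated for single basis words $a\in Y_{k}$, $b\in Y_{l}$, to arbitrary homogeneous elements $a\in\spann(Y_{k})$, $c\in\spann(Y_{l})$, keeping the same constants
$$
\|E_{n}(ac)\|_{2} \leq \begin{cases} \|a\|_{2}\|c\|_{2} & k+l-n \text{ even}\\ K(a)\|a\|_{2}\|c\|_{2} & k+l-n \text{ odd.}\end{cases}
$$
Writing $a=\sum_{i}\alpha_{i}a_{i}$ and $c=\sum_{m}\gamma_{m}c_{m}$ and expanding $E_{n}(ac)=\sum_{i,m}\alpha_{i}\gamma_{m}E_{n}(a_{i}c_{m})$ via the explicit word formulas of the previous lemma, the surviving output words $v_{1}uw_{1}$ (odd case) and $v_{1}xyw_{1}$ (even case) are orthonormal up to the coupling scalars $\tr(v_{2}w_{2})$, so a Cauchy--Schwarz estimate over $i$ and $m$ collapses the double sum to $\|a\|_{2}\|c\|_{2}$; the extra factor $K(a)$ in the odd case is exactly the constant $\big(\sum_{x\in F_{i}(a)}\|x\|_{\infty}^{2}\big)^{1/2}\le K(a)$ from the letters of $a$ that merge with letters of $c$. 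The only new feature over \cite{MR1478545} is that amalgamation over $D=\C^{2}$ makes some products $a_{i}c_{m}$ vanish when the supporting projections $p,q$ fail to match; since this merely deletes terms, it can only decrease $\|E_{n}(ac)\|_{2}$, so the bound is unaffected.

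Next I would carry out the Lemma~3.5 step. Write $y=\sum_{j=0}^{k}y_{j}$ with $y_{j}\in\spann(Y_{j})$, noting $\|y_{j}\|_{2}\le\|y\|_{2}$ and $K(y_{j})\le K(y)$ since $F_{i}(y_{j})\subseteq F_{i}(y)$, and $K(y)\ge 1$ because every basis letter has $\|\cdot\|_{\infty}\ge\|\cdot\|_{2}=1$. Decompose $L_{y}=\sum_{d=-k}^{k}T_{d}$ by length change, where $T_{d}\xi=\sum_{l}E_{l+d}(y\,\xi_{l})$ for $\xi=\sum_{l}\xi_{l}$. Each $T_{d}$ maps $\spann(Y_{l})$ into the orthogonal subspace $\spann(Y_{l+d})$, so $\|T_{d}\|=\sup_{l}\|T_{d}|_{\spann(Y_{l})}\|$. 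For $\xi_{l}\in\spann(Y_{l})$, the homogeneous estimate (using $K\ge 1$ to absorb the even case) together with Cauchy--Schwarz over the $k+1$ homogeneous components gives
$$
\|E_{l+d}(y\,\xi_{l})\|_{2}\le\sum_{j=0}^{k}K(y_{j})\|y_{j}\|_{2}\|\xi_{l}\|_{2}\le\Big(\sum_{j=0}^{k}K(y_{j})^{2}\Big)^{1/2}\Big(\sum_{j=0}^{k}\|y_{j}\|_{2}^{2}\Big)^{1/2}\|\xi_{l}\|_{2}\le\sqrt{k+1}\,K(y)\|y\|_{2}\|\xi_{l}\|_{2},
$$
so $\|T_{d}\|\le\sqrt{k+1}\,K(y)\|y\|_{2}$. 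Summing the triangle inequality over the $2k+1$ values of $d$ yields $\|y\|_{\infty}=\|L_{y}\|\le(2k+1)\sqrt{k+1}\,K(y)\|y\|_{2}\le(2k+1)^{3/2}K(y)\|y\|_{2}$, since $\sqrt{k+1}\le\sqrt{2k+1}$.

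The main obstacle is the homogeneous upgrade (the Lemma~3.4 step): one must check that the output words produced by the two word formulas are genuinely pairwise orthogonal after the inner cancellations, so that Cauchy--Schwarz applies cleanly, while simultaneously tracking the $\C^{2}$-amalgamation bookkeeping of which products vanish. Granting that, the Lemma~3.5 step is the purely formal length-change decomposition above, and the exponent $3/2$ is precisely $1$ (summing over the $2k+1$ length changes $d$) plus $1/2$ (Cauchy--Schwarz over the $k+1$ homogeneous degrees).
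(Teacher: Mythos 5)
Your proposal is correct and takes essentially the same approach as the paper: the paper's proof of this lemma is precisely the instruction to run "the same arguments as in Lemmas 3.4 and 3.5 of \cite{MR1478545}," and your two steps (the homogeneous upgrade of the word estimate, then the length-change decomposition of $L_{y}$ with Cauchy--Schwarz over the $k+1$ degrees and the triangle inequality over the $2k+1$ length changes) are exactly those arguments, yielding the same constant $(2k+1)\sqrt{k+1}\le(2k+1)^{3/2}$. Your handling of the $\C^{2}$-amalgamation --- that vanishing product words only delete mutually orthogonal terms and so cannot increase any $\|\cdot\|_{2}$ bound --- is the same observation the paper makes around its Lemma 3.3 analogue.
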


Now, let $u_{1} \in B_{1}$, and $u_{2}, u_{2}'$ be the unitary elements satisfying
$$
E_{D}(u_{1}) = 0 = E_{D}(u_{2}) = E_{D}(u_{2}') = E_{D}(u_{2}^{*}u_{2}')
$$
and $v = pvp + qvq$ for $v \in \{u_{1}, u_{2}, u_{2}'\}$.  
Now assume that $\{pu_{1}p, qu_{1}q\} \subset X_{1}$ and that $\{pu_{2}p, qu_{2}q, pu_{2}'p, qu_{2}'q\} \subset X_{2}$.  
Notice that if $w \in Y$, then $vw = pvpw$ or $qvqw$ based on how the basis elements in $X_{i}$ were chosen.  
Suppose $y \in \spann(\cup_{n=0}^{k} Y_{n})$.  
Following Lemma 3.7 of \cite{MR1478545}, we set $u' = (u_{1}u_{2}^{*})^{l}$ and $w = (u_{1}u_{2}')^{l}$ where $l \geq (k+3)/2$.  It follows from their arguments that
$$
u'yw = \sum_{j=1}^{M} \alpha_{j}y_{j}
$$
where each $y_{j} \in Y$ has length at most $2l + k$, begins with either $pu_{1}p$ or $qu_{1}q$, and ends with either $pu_{2}'p$ and $qu_{2}'q$.  
Choose $m \geq (2l + k + 1)/2$, and set $r = (u_{1}u_{2})(u_{1}u_{2}')^{m}(u_{1}u_{2})$.  
Notice that since $m$ is sufficiently large, the elements $ry_{i_{1}}ry_{i_{2}}\cdots ry_{i_{n}}$ and $ry_{j_{1}}ry_{j_{2}}\cdots ry_{j_{n}}$ are either 0 or have 2-norm 1.  
If they are nonzero, they are equal if and only if $i_{k} = j_{k}$ for all $k$, and are orthogonal otherwise.  Setting $u = r'u$, we see that
$$
(uyv)^{n} = \sum_{i_{n}, ..., i_{n}}\alpha_{i_{1}}\cdots\alpha_{i_{n}}ry_{i_{1}}\cdots ry_{i_{n}},
$$
so we see that $K((uyv)^{n}) \leq K(uyv)$, and $\|(uyv)^{n}\|_{2} \leq \|uyv\|_{2}^{n}$ (since some of the words $ry_{i_{1}}\cdots ry_{i_{n}}$ may be zero even though the $y_{i_{k}}$ are not).  
This establishes the following lemma:

\begin{lem} \label{lem:2normpower}
Given $y \in \spann(Y)$, There are unitaries $u$ and $w$ in $\spann(Y)$ and a constant $K < \infty$ such that
$\|(uyw)^{n}\|_{2} \leq \|uyw\|_{2}^{n}$ and $K((uyw)^{n}) \leq K$ for all $n$.
\end{lem}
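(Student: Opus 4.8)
The plan is to follow the spacing argument of Dykema--Haagerup--R\o rdam \cite{MR1478545}, adapting their Lemma 3.7 to amalgamation over $D=\C^2$; in fact the entire construction has been carried out in the discussion preceding the statement, so the proof amounts to checking that the two asserted estimates hold for the element $uyw$ produced there. First I would fix $y\in\spann(Y)$ and choose $k$ with $y\in\spann(\bigcup_{n=0}^{k}Y_{n})$. Using the unitaries $u_{1}\in B_{1}$ and $u_{2},u_{2}'\in B_{2}$ with $v=pvp+qvq$ and $E(u_{1})=E(u_{2})=E(u_{2}')=E(u_{2}^{*}u_{2}')=0$, I form $u'=(u_{1}u_{2}^{*})^{l}$ and $w=(u_{1}u_{2}')^{l}$ for $l\geq(k+3)/2$, so that $u'yw=\sum_{j}\alpha_{j}y_{j}$ is a finite sum of reduced words $y_{j}\in Y$, each of length at most $2l+k$, each beginning with $pu_{1}p$ or $qu_{1}q$ and ending with $pu_{2}'p$ or $qu_{2}'q$. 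This endpoint normalization is exactly what guarantees that, after inserting the spacer $r=(u_{1}u_{2})(u_{1}u_{2}')^{m}(u_{1}u_{2})$ with $m\geq(2l+k+1)/2$ and setting $u=ru'$, the products $ry_{i_{1}}\cdots ry_{i_{n}}$ remain reduced words in $Y$.

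The key step is to expand
$$
(uyw)^{n}=\sum_{i_{1},\dots,i_{n}}\alpha_{i_{1}}\cdots\alpha_{i_{n}}\,ry_{i_{1}}\cdots ry_{i_{n}}
$$
and to check that the spacing by $r$ forces each summand $ry_{i_{1}}\cdots ry_{i_{n}}$ to be either $0$ or a single element of $Y$ (hence of $2$-norm $1$), and that two summands with distinct multi-indices $(i_{1},\dots,i_{n})\neq(j_{1},\dots,j_{n})$ are orthogonal whenever both are nonzero. Granting this, orthogonality together with the fact that some summands may vanish gives $\|(uyw)^{n}\|_{2}\leq\|uyw\|_{2}^{n}$: the summands that survive all have every letter $ry_{i_{r}}$ nonzero, so their coefficients contribute at most $(\sum_{j:\,ry_{j}\neq0}|\alpha_{j}|^{2})^{n}=\|uyw\|_{2}^{2n}$, and the inequality (rather than the equality of the scalar case \cite{MR1478545}) is precisely due to words $ry_{i_{1}}\cdots ry_{i_{n}}$ collapsing to $0$ at a junction even though each $y_{i_{r}}$ is nonzero. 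Since every letter occurring in any summand of $(uyw)^{n}$ already occurs in $uyw$, we have $F_{i}((uyw)^{n})\subseteq F_{i}(uyw)$ for $i\in\{1,2\}$, whence $K((uyw)^{n})\leq K(uyw)=:K$ for all $n$, which is the second estimate.

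The main obstacle is the bookkeeping in this key step: one must verify, using the word-multiplication lemma above and the support types $x=pxp,\,pxq,\,qxp,\,qxq$ of the basis elements, that the spacer length $m$ is large enough that no cancellation or recombination can occur across the $r$-junctions, so that distinct multi-indices never yield the same reduced word. In the scalar setting this is exactly \cite[Lemma 3.7]{MR1478545}; the adaptation over $\C^{2}$ is routine once one observes that the only genuinely new phenomenon---a spaced word vanishing because of mismatched $p,q$ supports---can only delete terms and thus decrease the $2$-norm, and never creates new coincidences between distinct multi-indices. Hence both desired estimates survive the passage to $\C^{2}$-amalgamation.
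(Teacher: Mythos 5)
Your proposal is correct and is essentially identical to the paper's argument: the same unitaries $u'=(u_{1}u_{2}^{*})^{l}$, $w=(u_{1}u_{2}')^{l}$ with $l\geq (k+3)/2$, the same spacer $r=(u_{1}u_{2})(u_{1}u_{2}')^{m}(u_{1}u_{2})$ with $m\geq (2l+k+1)/2$ and $u=ru'$, and the same dichotomy that each spaced word $ry_{i_{1}}\cdots ry_{i_{n}}$ is either $0$ or an element of $Y$, with distinct nonzero multi-indices orthogonal. Your explicit remark that the surviving summands inject into $n$-tuples of indices $j$ with $ry_{j}\neq 0$, so that $\|(uyw)^{n}\|_{2}^{2}\leq\bigl(\sum_{j:\,ry_{j}\neq0}|\alpha_{j}|^{2}\bigr)^{n}=\|uyw\|_{2}^{2n}$, merely spells out the paper's parenthetical observation that vanishing words can only decrease the $2$-norm, which is exactly where the inequality (rather than the scalar-case equality) comes from.
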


Note the slight difference between this statement and Lemma 3.7 in \cite{MR1478545}.  Denote $U(B)$ and $GL(B)$ as the unitary group and group of invertible elements of $B$ respectively.  If $r(y)$ denotes the spectral radius of $y$, then it is straightforward to see that
$$
\inf_{u \in U(B)} r(uy) \geq \dist(u, GL(B))
$$
(see the discussion before Theorem 3.8 in \cite{MR1478545}).  We are now ready to prove the theorem.

\begin{proof}[Proof of Theorem \ref{thm:Amalg.sr1}]
Following the proof of Theorem 3.8 in \cite{MR1478545}, let $y \in \spann \cup_{n=0}^{k}Y_{n}$, and $u$, and $w$ be as in the statement of Lemma \ref{lem:2normpower}.  Note that there is an $l$ with
$$
u, v \in \spann\left(\bigcup_{n=0}^{l} Y_{n}\right).
$$
This implies
$$
(uaw)^{m} \in \spann\left(\bigcup_{n=0}^{m(k + 2l)}Y_{n}\right),
$$
and hence we see
$$
\|(uyw)^{m}\| \leq K(2m(k + 2l) + 1)^{3/2}\|(uyw)^{m}\|_{2} \leq K(2m(k + 2l) + 1)^{3/2}\|uyw\|_{2}^{m}.
$$
From these inequalities, we have:
$$
\inf_{u \in U(B)} r(uy) \leq r(uyw) = \liminf_{n \rightarrow \infty} \|(uyw)^{m}\|^{1/m} \leq \|y\|_{2}
$$
which implies from the discussion above that $\dist(y, GL(B)) \leq \|y\|_{2}$ for all $y \in \spann(Y)$, and thus $\dist(pyp, GL(pB p)) \leq \|pyp\|_{2}$ for all $y \in \spann(Y)$.  
If $pB p$ did not have stable rank 1, there would be some $a \in pB p$ with $\|a\| = 1 = \dist(a, GL(pB p))$ \cite{MR951510}.  
As $a$ is a limit of elements in $p\spann(Y)p$, it follows that $\dist(a, GL(B)) \leq \|a\|_{2}$, which implies that $\|a\| = 1 = \|a\|_{2}$. 
Hence $a$ is unitary in $pB p$ which is a contradiction, as $a$ was chosen to be not invertible.  
Therefore $pB p$ has stable rank 1, and similarly, $qB q$ has stable rank 1.
\end{proof}

\begin{rem}
The authors strongly believe that this is not the most general statement that can be proven about stable rank of amalgamated free products over finite dimensional algebras.  We chose to set the traces of the minimal projections in the amalgam equal to each other in order to give an easy (and helpful) description of $B$ orthonormal bases.
\end{rem}

\begin{thm}
The algebra $\cA_{0}$ has stable rank 1.
\end{thm}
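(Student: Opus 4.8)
The plan is to split into the two cases that organize the proofs of Theorems \ref{thm:SimpleTrace1} and \ref{thm:SimpleTrace2}. If $\Gamma$ is as in Theorem \ref{thm:SimpleTrace1}, then we have already seen that $\cA_0$ is a reduced free product over $\C$ satisfying the Avitzour condition, so Theorem \ref{thm:AvitzourSR} gives at once that $\cA_0$ has stable rank 1. It therefore remains only to treat the case in which $\Gamma$ satisfies Assumption \ref{assume:Cycle}, and this is exactly the situation for which Theorem \ref{thm:Amalg.sr1} was designed.

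In that case I would reuse the amalgamated free product decomposition built in the proof of Theorem \ref{thm:SimpleTrace2}. Setting $B = (p_\star + q)\cS(\Gamma)(p_\star + q)$, that proof writes
$$
B = \cD \underset{D'}{\Asterisk}\, C',
$$
where $D' = \overset{p_\star}{\underset{1}{\C}} \oplus \overset{q}{\underset{1}{\C}} \cong \C^{2}$, the first factor $\cD$ is (a corner of) $M_{2}(\C) \otimes C[0,1]$ with trace $\Tr_{M_{2}(\C)} \otimes d\lambda$, and $C'$ is the compression by $p_\star + q$ of the free product of $\C^{3}$ with $\cS(\Gamma \setminus \{\e\}, \mu)$. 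The goal is to apply Theorem \ref{thm:Amalg.sr1} to this decomposition with the two minimal projections $p_\star$ and $q$; then $p_\star B p_\star = p_\star \cS(\Gamma) p_\star = \cA_0$ inherits stable rank 1, which is what we want.

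To invoke Theorem \ref{thm:Amalg.sr1} I must check its hypotheses. First, the two minimal projections of the amalgam must carry equal trace. We have $\Tr(p_\star) = \mu(\star) = 1$, and $\Tr(q) = 1$ as well: $q$ is the support projection of $g_{\e'} g_{\e'}^{*}$, which is Murray--von Neumann equivalent to the support of $g_{\e'}^{*} g_{\e'}$, and by Facts \ref{facts:edge}(5)--(6) (using $\mu(\star) \le \mu(\alpha)$) the element $g_{\e'}^{*} g_{\e'}$ has full support $p_\star$, so $\Tr(q) = \Tr(p_\star) = 1$. Second, I must exhibit block-diagonal unitaries $u_{1} \in \cD$ and $u_{2}, u_{2}' \in C'$, meaning $v = p_\star v p_\star + q v q$ for each, satisfying $E(u_{1}) = E(u_{2}) = E(u_{2}') = E(u_{2}^{*} u_{2}') = 0$. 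These are the unitaries already produced when verifying condition (1) of Lemma \ref{lem:simpleamalgamated} in the proof of Theorem \ref{thm:SimpleTrace2}: the Haar unitary $V = e^{2\pi i t}\cdot 1$ in $\cD$ is diagonal and expectationless, while unitaries of the form $u + v$ with $u \in p_\star C' p_\star$ a Haar unitary (from $g_{f}^{*} g_{f}$ for an edge $f \neq \e'$ with $t(f) = \star$) and $v \in q C' q$ a trace-zero unitary (from the interpolated free group factor generated by $q C' q$) are block diagonal by construction; the diffuse and free-group-factor structure of the corners supplies a second such unitary $u_{2}'$ realizing $E(u_{2}^{*} u_{2}') = 0$.

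The hard part will be the block-diagonality constraint $v = p_\star v p_\star + q v q$ demanded by Theorem \ref{thm:Amalg.sr1}, which is stronger than the plain expectationless condition used for simplicity in Theorem \ref{thm:SimpleTrace2}; the step to verify with care is that the unitaries from the two corners genuinely assemble into block-diagonal unitaries of $C'$ and that $E(u_{2}^{*} u_{2}') = 0$ survives this restriction. Once the hypotheses are confirmed, Theorem \ref{thm:Amalg.sr1} yields that $p_\star B p_\star$ has stable rank 1, and since $p_\star B p_\star = \cA_0$, the theorem follows.
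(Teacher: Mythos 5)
Your proposal is correct and follows essentially the same route as the paper: the same two-case split, with Theorem \ref{thm:AvitzourSR} handling the case of Theorem \ref{thm:SimpleTrace1}, and Theorem \ref{thm:Amalg.sr1} applied to the amalgamated free product decomposition of $(p_\star + q)\cS(\Gamma,\mu)(p_\star + q)$ from the proof of Theorem \ref{thm:SimpleTrace2} handling Assumption \ref{assume:Cycle}. The hypothesis checks you spell out (that $\Tr(q)=\Tr(p_\star)=1$ via Facts \ref{facts:edge}(5)--(6), and that the unitaries $V$ and $u+v$ are block diagonal with respect to $p_\star$ and $q$) are exactly what the paper leaves implicit when it asserts the conditions of Theorem \ref{thm:Amalg.sr1} hold.
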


\begin{proof}
Theorem \ref{thm:AvitzourSR}, together with the proof of Theorem \ref{thm:SimpleTrace1} shows that $\cA_{0}$ has stable rank  1 whenever $\Gamma$ satisfies the hypotheses of Theorem \ref{thm:SimpleTrace1}, so we may assume that $\Gamma$ satisfies Assumption \ref{assume:Cycle}.

In this case, then consider the algebra $(p_{\star} + q)\cS(\Gamma, \mu)(p_{\star} + q)$ in the proof of Theorem \ref{thm:SimpleTrace2}.  By the proof of Theorem \ref{thm:SimpleTrace2}, the amalgamated free product expansion of $(p_{\star} + q)\cS(\Gamma, \mu)(p_{\star} + q)$ satisfies the conditions in Theorem \ref{thm:Amalg.sr1}, and $p_{\star}$ is in the amalgam.  Therefore, by Theorem \ref{thm:Amalg.sr1}, $\cA_{0}$ has stable rank 1.
\end{proof}

\begin{cor}
$\cA_{k}$ has stable rank 1 for all $k$.
\end{cor}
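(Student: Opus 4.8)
The plan is to deduce stable rank one for every $\cA_k$ from the already-established fact that $\cA_0$ has stable rank one, by realizing each $\cA_k$ as a corner of a matrix algebra over $\cA_0$. Recall from the discussion preceding Lemma \ref{lem:fulluniquetrace} that $\cA_0 \cong p\cA_k p$ for a projection $p\in \cA_k$, and that $p$ is full in $\cA_k$ because $\cA_k$ is simple. The two standard inputs I would invoke are: (i) Rieffel's computation of the stable rank of matrix algebras, which gives $\operatorname{sr}(M_n(\cA_0))=1$ once $\operatorname{sr}(\cA_0)=1$; and (ii) the fact that a hereditary subalgebra, in particular a corner by a projection, of a $C^*$-algebra of stable rank one again has stable rank one.

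First I would use fullness and simplicity to bound the unit of $\cA_k$ by copies of $p$. Since $p$ is full in the unital simple algebra $\cA_k$, there is an $n$ with $1_{\cA_k}$ Murray--von Neumann subequivalent to $P_n := \operatorname{diag}(p,\dots,p)$ in $M_n(\cA_k)$; that is, $1_{\cA_k}$ is equivalent to a projection $e\le P_n$. Consequently
\[
\cA_k \;\cong\; e\,M_n(\cA_k)\,e,
\]
and because $e\le P_n$ we have $e\in P_n M_n(\cA_k) P_n = M_n(p\cA_k p) = M_n(\cA_0)$, so in fact $\cA_k \cong e\,M_n(\cA_0)\,e$ is a corner of $M_n(\cA_0)$.

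Given this, the conclusion is immediate: by input (i), $M_n(\cA_0)$ has stable rank one, and by input (ii) its corner $e\,M_n(\cA_0)\,e \cong \cA_k$ has stable rank one. Equivalently, one can phrase the whole argument as the assertion that stable rank one is a Morita-equivalence invariant among unital separable $C^*$-algebras, applied to $\cA_k \sim \cA_0$ from Corollary \ref{cor:allME}; the matrix-corner description above is precisely what makes that invariance concrete in our situation.

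The main obstacle is the direction of the inheritance: corners of stable-rank-one algebras inherit stable rank one, but we are handed the corner $\cA_0 = p\cA_k p$ and must travel the other way, up to $\cA_k$. The amplification step is exactly what reverses the direction, turning $\cA_k$ itself into a corner of the stable-rank-one algebra $M_n(\cA_0)$; the only points needing care are the passage between Cuntz and Murray--von Neumann subequivalence for the projections $1_{\cA_k}$ and $P_n$ (routine since both are genuine projections) and the verification that $e$ lands inside $M_n(\cA_0)$, which follows from $e\le P_n$.
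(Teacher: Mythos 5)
Your proof is correct, and it gets to the conclusion by a more self-contained route than the paper. Both arguments begin identically, with the observation that $\cA_0 \cong p\cA_k p$ for a projection $p$ which is full because $\cA_k$ is simple; but at that point the paper simply cites Blackadar's theorem on the stable rank of full corners \cite{MR2063114} to pass from stable rank one of the corner $p\cA_kp$ up to $\cA_k$, whereas you reprove the special case of that theorem needed here. Your amplification step is the standard row--column trick: since $\cA_k$ is unital and the closed ideal generated by $p$ is everything, the algebraic ideal is everything as well (a dense ideal of a unital algebra contains an invertible element), so $1 = \sum_{i=1}^n x_i p y_i$; the idempotent $CR$, with $R = (x_1p,\dots,x_np)$ and $C = (py_1,\dots,py_n)^{T}$, is then equivalent to $1 = RC$ and lies in $P_n M_n(\cA_k)P_n = M_n(\cA_0)$, which yields your projection $e \le P_n$ and the identification $\cA_k \cong e M_n(\cA_0) e$. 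After that, only the two easy permanence facts are needed: Rieffel's matrix formula giving stable rank one for $M_n(\cA_0)$, and inheritance of stable rank one by corners (a Schur-complement argument). What your route buys is elementarity and transparency about exactly where unitality, simplicity, and fullness enter; what the citation buys the paper is brevity and generality, since Blackadar's theorem requires neither simplicity nor the explicit bound $n$ produced by your amplification. The points you flag as needing care (Cuntz versus Murray--von Neumann subequivalence for projections, and $e \in M_n(\cA_0)$) are handled correctly, so there is no gap.
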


\begin{proof}
We note that $\cA_{0} \cong p\cA_{k}p$ for some projection $p$.  From \cite{MR2063114}, since $p$ is full, $\cA_{k}$ has stable rank 1.
\end{proof}
We will now use stable rank 1 to show that $\cA_{\I}$ has comparison of projections in the following sense:

\begin{thm}\label{thm:compareproj}
Suppose $p$ and $q$ are projections in $\cA_{\I}$ with $\Tr(p) > \Tr(q)$.  Then there exists $v \in \cA_{\I}$ such that $v^{*}v = q$ and $vv^{*} \leq p$.
\end{thm}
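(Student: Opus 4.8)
The plan is to deduce comparison from cancellation of projections, which is available because $\cA_0$ has stable rank one, together with the description of the positive cone of $K_0(\cA_0)$. First I would use the identification $\cA_\I\cong\cA_0\otimes\cK$ established above. This makes $\cA_\I$ a stable algebra, so every projection in $\cA_\I$ is Murray--von Neumann equivalent to a projection in some $M_n(\cA_0)$ and hence determines a class in $K_0(\cA_\I)\cong K_0(\cA_0)$ of finite trace; in particular $[p],[q]\in K_0(\cA_0)$ make sense and $\Tr(p),\Tr(q)<\I$. Since $\cA_0$ has stable rank one, it has cancellation of projections (a theorem of Rieffel), which in the stable algebra $\cA_\I$ says precisely that two projections are Murray--von Neumann equivalent if and only if they have the same class in $K_0$.

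Next I would analyze the class $x=[p]-[q]\in K_0(\cA_0)$. The trace $\Tr$ induces a homomorphism $K_0(\cA_0)\to\R$ with $\Tr(x)=\Tr(p)-\Tr(q)>0$, so the identity $K_0(\cA_0)^+=\{y\in K_0(\cA_0):\Tr(y)>0\}\cup\{0\}$ proved in Theorems \ref{thm:SimpleTrace1} and \ref{thm:SimpleTrace2} gives $x\in K_0(\cA_0)^+\setminus\{0\}$. Hence $x=[r]$ for some nonzero projection $r$ in $\cA_\I$, and therefore $[q]+[r]=[p]$. Note this uses only the shape of the positive cone, not weak unperforation, since the difference already lies in $K_0^+$.

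Finally I would assemble the partial isometry. Using that $\cA_\I$ is stable, choose mutually orthogonal projections $q',r'\in\cA_\I$ with $q'\sim q$ and $r'\sim r$, and let $t\in\cA_\I$ be a partial isometry with $t^*t=q$ and $tt^*=q'$. Then $[q'+r']=[q]+[r]=[p]$, so cancellation produces $u\in\cA_\I$ with $u^*u=q'+r'$ and $uu^*=p$. Put $v=ut$. Since $t=q't$ and $q'r'=0$, we get $v^*v=t^*(q'+r')t=t^*t=q$, while $vv^*=uq'u^*\le u(u^*u)u^*=p$ because $q'\le u^*u$. Thus $v$ has the required properties. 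The main obstacle is the first step: verifying that stable rank one of the unital algebra $\cA_0$ yields cancellation of projections in the non-unital stable algebra $\cA_\I$, and handling the attendant bookkeeping of orthogonal representatives $q',r'$ and the equivalence $t$ in the stabilization; once this is in place, the $K$-theoretic and algebraic manipulations are routine.
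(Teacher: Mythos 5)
Your proposal is correct and follows essentially the same route as the paper: use the positive-cone description of $K_0(\cA_0)$ from Theorems \ref{thm:SimpleTrace1} and \ref{thm:SimpleTrace2} to write $[p]-[q]=[r]$ for a projection $r$, pick orthogonal representatives $q',r'$, and invoke stable rank one of $\cA_0$ together with $\cA_\I\cong\cA_0\otimes\cK$ (Rieffel's cancellation theorem) to get $q'+r'\sim p$ and $q\sim q'$. The paper leaves the final assembly of the partial isometry $v$ implicit, whereas you spell it out; otherwise the arguments coincide.
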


\begin{proof}
By Theorems \ref{thm:SimpleTrace1} and \ref{thm:SimpleTrace2}, there exists $r \in P(\cA_{\I})$ such that in $K_{0}$, $[p] - [q] = [r]$.  This means that there exist projections $q'$ and $r'$ in $\cA_{\I}$ with $q'r' = 0$, $[q'] = [q]$, $[r'] = [r]$, and $[q' + r'] = [p]$.  Since $\cA_{0}$ has stable rank 1 and $\cA_{\I} = \cA_{0} \otimes \cK(H)$, it follows from \cite{MR693043} that $q' + r'$ is Murray von Neumann equivalent to $p$, and that $q$ is Murray von Neumann equivalent to $q'$.  These observations give the desired result.
\end{proof}

\begin{rem} Note that this does \emph{not} mean that if $\tr(p) = \tr(q)$ then $p$ and $q$ are equivalent in $\cA_{\I}$.  For example, if $\cP = \TL_{\bullet}(\sqrt{2})$, then $\Tr(\jw{0}) = 1 = \Tr(\jw{2})$ but $[\jw{0}] \neq [\jw{2}]$ in $K_{0}(\cA_{0})$.
\end{rem}

Recall that the scale $\Sigma(A)$ of a $C^{*}$-algebra $A$ is defined as the following subset of $K_{0}(A)$:
$$
\Sigma(A) = \set{[p]}{p \in P(A)}.
$$
We have the following corollary, which is easily deduced from Theorems \ref{thm:SimpleTrace1}, \ref{thm:SimpleTrace2}, and \ref{thm:compareproj}.

\begin{cor} \label{cor:scale}
$\Sigma(\cA_{0}) = \set{x \in K_{0}(\cA_{0})}{ \tr(x) \in (0, 1) } \cup \{[1], [0]\}$.
\end{cor}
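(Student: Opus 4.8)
The plan is to prove the two set inclusions separately. The containment of the scale in the prescribed set will follow immediately from faithfulness of the unique trace, while the reverse containment is where the real content lies and will be handled by the comparison theorem \ref{thm:compareproj}.

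First I would assemble the ingredients. By Theorems \ref{thm:SimpleTrace1} and \ref{thm:SimpleTrace2}, $\cA_0$ is simple with a unique tracial state $\tr$; since a tracial state on a simple unital $C^*$-algebra is automatically faithful (its trace-kernel ideal is a proper, hence zero, two-sided ideal), $\tr$ is faithful. Normalizing so that $\tr$ is a state is consistent with the quantum-dimension weighting, because $1_{\cA_0}=p_\star$ and $\Tr(p_\star)=\mu(\star)=1$; thus $\tr=\Tr$ on $\cA_0$. The same theorems supply the positive cone $K_0(\cA_0)^+=\set{x\in K_0(\cA_0)}{\tr(x)>0}\cup\{0\}$, and I will also use the identification $\cA_\I\cong\cA_0\otimes\cK$ with $\cA_0=p_\star\cA_\I p_\star$, together with the comparison of projections in $\cA_\I$.

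For the inclusion $\subseteq$, I would take $p\in P(\cA_0)$, so that $0\le p\le 1$ forces $\tr([p])=\tr(p)\in[0,1]$. If $p=0$ or $p=1$ the class is $[0]$ or $[1]$; otherwise applying faithfulness of $\tr$ to $p$ and to $1-p$ gives $\tr(p)>0$ and $\tr(1-p)>0$, so $\tr([p])\in(0,1)$, landing in the right-hand set. For the reverse inclusion, the classes $[0]$ and $[1]=[p_\star]$ are clearly in $\Sigma(\cA_0)$, so I would fix $x\in K_0(\cA_0)$ with $\tr(x)\in(0,1)$. Since $\tr(x)>0$, the positive-cone description yields $x=[P]$ for a projection $P\in M_n(\cA_0)\subset\cA_0\otimes\cK\cong\cA_\I$, with $\Tr(P)=\tr(x)$ under this identification. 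Because $\Tr(p_\star)=1>\tr(x)=\Tr(P)$, Theorem \ref{thm:compareproj} produces $v\in\cA_\I$ with $v^*v=P$ and $vv^*\le p_\star$; then $q:=vv^*$ is a projection under $p_\star$, hence $q\in p_\star\cA_\I p_\star=\cA_0$, and $q$ is Murray--von Neumann equivalent to $P$, so $[q]=x$ in $K_0(\cA_0)$. Thus $x\in\Sigma(\cA_0)$.

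The computation is essentially immediate once comparison is available; the only step requiring care is the reverse inclusion, where a $K_0$-class that a priori is represented merely by a matrix projection (equivalently, by a subprojection of a large projection in $\cA_\I$) must be converted into a genuine projection sitting inside the unital corner $\cA_0=p_\star\cA_\I p_\star$. This is exactly what Theorem \ref{thm:compareproj} accomplishes, and it relies on the strict inequality $\tr(x)<\tr(1)=1$; the boundary values $0$ and $1$ are excluded from the open interval precisely by faithfulness of the trace.
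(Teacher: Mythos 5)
Your proof is correct and is exactly the deduction the paper intends: the paper gives no written argument, stating only that the corollary "is easily deduced from Theorems \ref{thm:SimpleTrace1}, \ref{thm:SimpleTrace2}, and \ref{thm:compareproj}," and your argument—faithfulness of the unique trace for the inclusion $\Sigma(\cA_0)\subseteq\set{x}{\tr(x)\in(0,1)}\cup\{[0],[1]\}$, then the positive-cone description plus comparison of projections in $\cA_\I$ to pull a matrix projection down to a genuine projection under $p_\star$—is precisely that deduction. Nothing to add.
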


Principal graphs of (sub)factor planar algebras have the following property, which was pointed out to us by Noah Snyder. 

\begin{fact}
Suppose $\mu(V(\Gamma))\subset \bbQ_{>0}$, and for each $\alpha\in V(\Gamma)$, denote $\mu(\alpha)=\frac{m_\alpha}{n_\alpha}$ where $m_\alpha,n_\alpha\in \bbN$ is in lowest terms.
Then either $\mu(V(\Gamma))\subset \bbN$, or $\set{n_\alpha}{\alpha \in V(\Gamma)}$ is unbounded.
\end{fact}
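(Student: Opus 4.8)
The plan is to prove the contrapositive: assuming $\mu(V(\Gamma))\subset\Q_{>0}$ and that the denominators $\{n_\alpha\}$ are bounded, I would show $\mu(V(\Gamma))\subset\N$. Write $A$ for the adjacency matrix of $\Gamma$, a symmetric matrix of nonnegative integers, and recall that $\mu$ is the Frobenius--Perron weighting normalized so that $\mu(\star)=1$, satisfying $A\mu=\delta\mu$, i.e. $\delta\mu(\alpha)=\sum_{\beta}A_{\alpha\beta}\mu(\beta)$ at every vertex $\alpha$. The key structural input I would invoke is that, with this normalization, each quantum dimension is an \emph{integer polynomial in the modulus}: $\mu(\alpha)\in\Z[\delta]$ for all $\alpha$. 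This is standard for pointed principal graphs, and can be seen from the fact that $\mu(\alpha)=\Tr(p_\alpha)$ with $p_\alpha\in\cP_{2\,\depth(\alpha)}$ is computed from the integer path/fusion data together with $\delta$ via the Markov property; I would either cite this or verify it by induction on the depth. Granting it, the entire statement reduces to showing $\delta\in\Z$: for then $\Z[\delta]=\Z$, so every $\mu(\alpha)\in\Z$, and positivity gives $\mu(\alpha)\in\N$.

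First I would observe that $\delta\in\Q$. Evaluating the eigenvector equation at $\star$ gives $\delta=\delta\mu(\star)=\sum_{\beta}A_{\star\beta}\mu(\beta)$, a finite $\Z$-linear combination of the rational numbers $\mu(\beta)$, hence rational. The heart of the argument is then to upgrade this to $\delta\in\Z$ using boundedness of the denominators, via $p$-adic valuations. Boundedness means there is an $N\in\N$ with $N\mu(\alpha)\in\Z$ for all $\alpha$, so for each prime $p$ the valuations $v_p(\mu(\alpha))$ are bounded below (by $-v_p(N)$); being integers, $m:=\min_\alpha v_p(\mu(\alpha))$ is attained, at some vertex $\alpha_0$. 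Applying $v_p$ to $\delta\mu(\alpha_0)=\sum_\beta A_{\alpha_0\beta}\mu(\beta)$, the left side has valuation $v_p(\delta)+m$, while each term on the right has valuation $v_p(A_{\alpha_0\beta})+v_p(\mu(\beta))\geq 0+m$, so the right side has valuation $\geq m$. Hence $v_p(\delta)+m\geq m$, i.e. $v_p(\delta)\geq 0$. As this holds for every prime $p$ and $\delta\in\Q$, I conclude $\delta\in\Z$ (a rational that is a $p$-adic integer at every prime is a rational integer); since $\delta>1$, in fact $\delta\geq 2$.

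Combining the two inputs finishes the proof: $\delta\in\Z$ forces $\mu(\alpha)\in\Z[\delta]=\Z$, and $\mu>0$ gives $\mu(V(\Gamma))\subset\N$. The step I expect to be the main obstacle is the structural input $\mu(\alpha)\in\Z[\delta]$: the $p$-adic computation is routine, but it only controls $\delta$, and without knowing the dimensions are integer polynomials in $\delta$ one cannot pass from integrality of $\delta$ back to integrality of the individual $\mu(\alpha)$. It is worth noting that boundedness is genuinely used at exactly this point: for Temperley--Lieb with principal graph $A_\infty$ and $\delta\in\Q\setminus\Z$, say $\delta=\tfrac52$, the dimensions are the Chebyshev values $\mu_n=U_n(\delta/2)\in\Z[\delta]$ (so $\mu_1=\tfrac52,\ \mu_2=\tfrac{21}{4},\ \mu_3=\tfrac{85}{8},\dots$), whose denominators are the unbounded powers of $2$. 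Thus the rational, non-integral case really does occur, and the dichotomy of the statement is sharp.
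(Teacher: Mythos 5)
The $p$-adic half of your argument is correct: rationality of $\delta$ follows from the eigenvector equation at $\star$, and bounded denominators do force $v_p(\delta)\geq 0$ for every prime $p$, hence $\delta\in\Z$. But the gap sits exactly where you predicted it, and it is fatal: the structural input $\mu(\alpha)\in\Z[\delta]$ is \emph{false} for (sub)factor planar algebras, so integrality of $\delta$ cannot be transported back to the vertices. The induction on depth you sketch cannot close, because the eigenvector equation at a depth-$n$ vertex $\beta$ only pins down the sum $\sum_\alpha A_{\beta\alpha}\mu(\alpha)$ over its depth-$(n+1)$ neighbors, never the individual dimensions. Concretely, take the unshaded factor planar algebra generated by $X=\jw{1}\boxtimes\jw{1}$ in $\TL_\bullet(5/2)\boxtimes\TL_\bullet(2)$ (or, in the shaded setting, the Fuss--Catalan planar algebra with parameters $(5/2,2)$). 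Then $\delta=\tfrac52\cdot 2=5\in\Z$, yet the depth-two vertex $\jw{2}\boxtimes\jw{0}$ has dimension $(5/2)^2-1=21/4\notin\Z=\Z[\delta]$. (The Fact itself survives in this example because the denominators are unbounded: $21/4,\ 85/8,\ 341/16,\dots$ all occur.) Worse for your strategy, ``$\delta\in\Z$ plus bounded denominators'' does not force integrality if one only uses the relation $A\mu=\delta\mu$: the star graph whose center $c$ is joined to $\star,v_1,v_2$ by $2,1,2$ parallel edges has $\delta=3$ and rational eigenvector $\mu(\star)=1$, $\mu(c)=3/2$, $\mu(v_1)=1/2$, $\mu(v_2)=1$, with denominators bounded by $2$. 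So any correct proof must use planar-algebra structure beyond the eigenvector equation, and it must be a different input from the one you assumed.

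For comparison: the paper states this Fact without proof (it is credited to Noah Snyder), so there is no in-text argument to measure against; the standard argument runs through the fusion ring rather than the modulus. The vertex set of $\Gamma$ is closed under fusion---any simple summand of $\alpha\otimes\beta$ (in the shaded case, of $\alpha\otimes\bar{\beta}$, etc.) again appears in a tensor power of $X$---and dimensions are multiplicative, $\mu(\alpha)\mu(\beta)=\sum_\gamma N_{\alpha\beta}^{\gamma}\,\mu(\gamma)$ with $N_{\alpha\beta}^{\gamma}\in\Z_{\geq 0}$. Hence the $\Z$-span $R$ of $\set{\mu(\alpha)}{\alpha\in V(\Gamma)}$ is a unital subring of $\Q$. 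If the denominators $n_\alpha$ were bounded, then $L=\operatorname{lcm}\set{n_\alpha}{\alpha\in V(\Gamma)}$ would be finite and $R\subseteq\frac{1}{L}\Z$; but if some $\mu(\alpha)=m_\alpha/n_\alpha$ had $n_\alpha>1$, the powers $\mu(\alpha)^k=m_\alpha^k/n_\alpha^k\in R$ would be in lowest terms with denominator $n_\alpha^k>L$ for $k$ large, a contradiction. So bounded denominators force $\mu(V(\Gamma))\subset\N$. Note that this argument never mentions $\delta$ at all, whereas your valuation argument controls only $\delta$; the examples above show that controlling $\delta$ alone is not enough.
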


We deduce the following corollary, which shows one of two extremes for the algebras $\cA_{0}$.
\begin{cor}
$\cA_{0}$ is projectionless if and only if $\mu(V(\Gamma)) \subset \bbN$.  
If $\cA_{0}$ is not projectionless, then 
$
\set{\tr(p)}{p \in P(\cA_{0})}
$
is dense in $[0, 1]$
\end{cor}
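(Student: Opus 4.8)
The plan is to translate the entire statement into a question about the countable subgroup
\[
G \;=\; \tr_*\bigl(K_0(\cA_0)\bigr)\;\subseteq\;\R,
\]
where $\tr$ is the unique normalized trace on $\cA_0$; since $\Tr(p_\star)=\mu(\star)=1$, this $\tr$ is simply the restriction of $\Tr$ to $\cA_0$. Because $K_0(\cA_0)=\Z\set{[p_\alpha]}{\alpha\in V(\Gamma)}$ and $\tr([p_\alpha])=\mu(\alpha)$, the group $G$ is exactly the additive subgroup of $\R$ generated by the quantum dimensions $\set{\mu(\alpha)}{\alpha\in V(\Gamma)}$; note $1=\mu(\star)\in G$, so $\Z\subseteq G$. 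By Corollary \ref{cor:scale}, a real number $t$ equals $\tr(p)$ for some $p\in P(\cA_0)$ if and only if $t\in(G\cap(0,1))\cup\{0,1\}$. Thus $\cA_0$ is projectionless precisely when $G\cap(0,1)=\varnothing$, and the density assertion is precisely that $G\cap(0,1)$ is dense in $(0,1)$ whenever $G\neq\Z$.

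For the equivalence, I would argue as follows. If $\mu(V(\Gamma))\subseteq\N$ then every generator of $G$ lies in $\Z$, so $G\subseteq\Z$; together with $\Z\subseteq G$ this gives $G=\Z$, whence $G\cap(0,1)=\varnothing$ and $\cA_0$ is projectionless. Conversely, if $\mu(V(\Gamma))\not\subseteq\N$ then some $\mu(\alpha)\notin\Z$ (the $\mu(\alpha)$ are positive reals $\geq 1$, so ``$\subseteq\N$'' and ``$\subseteq\Z$'' coincide), so $G\supsetneq\Z$; choosing $g\in G\setminus\Z$ and subtracting its integer part yields an element of $G\cap(0,1)$, so $\cA_0$ has a nontrivial projection. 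This settles ``$\cA_0$ projectionless $\iff\mu(V(\Gamma))\subseteq\N$''.

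For the density statement, assume $\cA_0$ is not projectionless, i.e.\ $G\neq\Z$; I must show $G$ is dense in $\R$, since then $G\cap(0,1)$ is dense in $(0,1)$ and $\set{\tr(p)}{p\in P(\cA_0)}=(G\cap(0,1))\cup\{0,1\}$ is dense in $[0,1]$. Every subgroup of $\R$ is either discrete---hence of the form $c\Z$---or dense, so it suffices to rule out that $G$ is cyclic and strictly larger than $\Z$, i.e.\ of the form $\tfrac1k\Z$ with $k\geq 2$. If some $\mu(\alpha)$ is irrational, then $\Z+\Z\mu(\alpha)\subseteq G$ is already dense by Kronecker's theorem and we are done. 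If instead all $\mu(\alpha)\in\Q_{>0}$, write $\mu(\alpha)=m_\alpha/n_\alpha$ in lowest terms; since $\gcd(m_\alpha,n_\alpha)=1$ and $1\in G$, Bézout gives $1/n_\alpha\in G$ for every $\alpha$. The number-theoretic Fact above guarantees that, as not all $\mu(\alpha)$ are integers, the denominators $\set{n_\alpha}{\alpha\in V(\Gamma)}$ are unbounded, so $G$ contains positive elements $1/n_\alpha$ arbitrarily close to $0$, forcing $G$ to be dense.

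The one genuinely delicate point---and the reason the cited Fact is indispensable---is exactly this last dichotomy. Without it one could imagine a graph giving, say, $G=\tfrac12\Z$: then $\cA_0$ would be non-projectionless yet its projection traces would be only $\{0,\tfrac12,1\}$, far from dense. The Fact is precisely what excludes such bounded-denominator rational weightings (unless all weights are integral). So the main obstacle is not operator-algebraic---that content is entirely packaged in Corollary \ref{cor:scale}---but recognizing that the density conclusion rests on this arithmetic property of quantum dimensions of principal graphs and invoking it correctly.
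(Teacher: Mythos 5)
Your proof is correct and follows essentially the same route as the paper: both reduce the statement, via Corollary \ref{cor:scale}, to whether the subgroup $G=\tr(K_0(\cA_0))\subseteq\R$ generated by the quantum dimensions is $\Z$ or dense, and both invoke the arithmetic Fact on unbounded denominators (together with the standard irrational case) to get density when $\mu(V(\Gamma))\not\subset\N$. The only difference is expository: you spell out the B\'ezout step giving $1/n_\alpha\in G$ and the discrete-versus-dense dichotomy for subgroups of $\R$, which the paper's proof leaves implicit.
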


\begin{proof}
If $\mu( V(\Gamma)) \subset \bbN$, then $\set{\Tr(p)}{p \in P(\cA_{\I})} = \N$.   
Since $\Tr(p_{\star}) = 1$ and $\cA_{0} = p_{\star}\cA_{\I}p_{\star}$, the result follows.
If $\mu(V(\Gamma)) \not\subset \bbN$, there is a vertex $\alpha$ such that $\mu(\alpha)$ is either irrational, or a noninteger rational number.  
In either case, the group generated by $\mu(V(\Gamma))$, which is equal to $\tr(K_0(\cA_0))$, is dense in $\R$.  
From Corollary \ref{cor:scale}, 
$
\set{\tr(p)}{p \in P(\cA_{0})}
$
is dense in $[0, 1]$.
\end{proof}


\section{The $C^*$-tower}\label{sec:CStarTower}

Recall from \cite{MR2732052} that the {\rm II}$_1$-factors $(\cM_k)_{k\geq 0}$ form a Jones tower.
In this section, we analyze the tower $(\cA_k)_{k\geq 0}$ and show they form a Watatani $C^*$-tower (see Definition \ref{defn:WatataniTower}).
Of particular interest are our results on the sizes of Pimsner-Popa bases.   

\subsection{Watatani index and Pimsner-Popa bases}

We provide some background from \cite{MR996807} on the basic construction for $C^*$-algebras. 

\begin{defn}
Suppose $A\subset B$ is an inclusion of $C^*$-algebras with a conditional expectation $E:B\to A$.
A quasi-basis for $B$ over $A$ is a finite subset $\{(a_i,b_i)\}_{i=1}^n$ such that for all $x\in B$,
$$
x=\sum_{i=1}^n a_i E(xb_i) = \sum_{i=1}^n E(a_i x)b_i.
$$

The inclusion $A\subset B$ is said to be of index-finite type if there is a quasi-basis for $B$ over $A$.
In this case, the Watatani index is given by
$$
[B:A]=\sum_{i=1}^n a_ib_i,
$$
which is an invertible element of $Z(B)^+$, the positive cone of the center of $B$, and is independent of the choice of quasi-basis.
\end{defn}

\begin{defn}
Suppose $A\subset B$ is an inclusion of index-finite type.
Let $\cL_A(B)$ denote the $C^*$-algebra of adjointable operators on $B$ considered as an $A-A$ bimodule.
(Note that if $T\in \cL_A(B)$, then $T$ commutes with the right $A$-action.)
It was shown in \cite{MR996807} that the following are all equal:
\be
\item
The $C^*$-algebra $\cL_A(B)$.
\item 
The $C^*$-subalgebra $\cK_A(B)\subset \cL_A(B)$ of ``compact operators."
\item
The $C^*$-subalgebra $\langle A,e_A\rangle\subset\cL_A(B)$ generated by $A$ and the projection $e_A$ onto $A\subset B$ as an $A-A$ invariant Hilbert subbimodule. 
In this case, $ae_A b^* = \ketbra{a}{b}\in \cK_A(B)$ for all $a,b\in B$.
\ee
The above algebra is called the basic construction of $A\subset B$, denoted $\langle B,A \rangle$.
Moreover, there is a canonical isomorphism from the basic construction to:
\begin{enumerate}[(4)]
\item
The $C^*$-algebra $\pi(B)e\pi(B)$ where $\pi:B\to B(H)$ is a faithful representation on a Hilbert space $H$, and $e\in B(H)$ is a projection satisfying $ebe=E(b)e$ for all $b\in B$ such that the map $A\to B(H)$ given by $a\mapsto \pi(a)e$ is injective.
\ee
\end{defn}

\begin{rem}
In the event that the inclusion $A\subset B$ is of index-finite type, the existence of a quasi-basis is equivalent to the existence of a Pimsner-Popa basis $\{b\}\subset B$, i.e., a finite subset such that one of the following equivalent statements hold, where $e_A$ is as in the above definition:
\be
\item
$\sum_b be_A b^* = 1_{\cL_A(B)}$
\item
$x=\sum_b E(xb)b^* $ for all $x\in B$
\item
$x=\sum_b bE(b^*x) $ for all $x\in B$.
\ee
See \cite{MR561983,MR996807,MR860811} for more details.
\end{rem}

\begin{rem}
If the inclusion $A\subset B$ is of index-finite type, then $\langle B, A\rangle$ is strongly Morita equivalent to $A$ via $B$ as a $\langle B, A\rangle-A$ Hilbert bimodule.
\end{rem}

\begin{defn}\label{defn:WatataniTower}
The dual conditional expectation $E_B : \langle B, A\rangle \to B$ is given by the unique extension of the map $ae_A b\mapsto [B:A]^{-1} ab$.
If $\{b\}$ is a Pimsner-Popa basis for $B$ over $A$, then $\{b e_A [B:A]^{1/2}\}$ is a basis for $\langle B, A\rangle$ over $B$, so $B\subset \langle B, A\rangle$ is of index-finite type.

Moreover, if $[B:A]\in A$, then $[\langle B, A\rangle : B]=[B: A]\in Z(A)\cap Z(B)\cap Z(\langle B,A\rangle)$.
In this case, we define the \underline{Watatani $C^*$-tower of $A\subset B$} as the tower of $C^*$-algebras obtained from iterating the basic construction.
\end{defn}

\subsection{The Watatani $C^*$-tower of the GJS algebras}

In the tower of {\rm II}$_1$-factors $(\cM_k)_{k\geq 0}$, the diagram for the inclusion $\cM_{k}\hookrightarrow \cM_{k+1}$ is given by
$$
\begin{tikzpicture}[baseline = 0cm]
	\draw (-.8, 0)--(.8, 0);
	\draw (0, 0)--(0, .8);
	\nbox{unshaded}{(0,0)}{.4}{0}{0}{$x$}
        \node at (-.6, .2) {{\scriptsize{$k$}}};
	\node at (.6, .2) {{\scriptsize{$k$}}};
\end{tikzpicture}\, \mapsto \begin{tikzpicture}[baseline = 0cm]
	\draw (-.8, 0)--(.8, 0);
    \draw (-.8, -.55)--(.8, -.55);
	\draw (0, 0)--(0, .8);
	\nbox{unshaded}{(0,0)}{.4}{0}{0}{$x$}
        \node at (-.6, .2) {{\scriptsize{$k$}}};
	\node at (.6, .2) {{\scriptsize{$k$}}};
\end{tikzpicture}\, ,
$$
and we identify $\cM_k$ with its image in $\cM_{k+1}$ and $\cA_k$ with its image in $\cA_{k+1}$. 
The trace preserving conditional expectation $E_{k}: \cM_{k} \rightarrow \cM_{k-1}$ is given by the diagram
$$
E_{k}(y) = 
\delta^{-1}\,
\begin{tikzpicture}[baseline = 0cm]
	\draw (-.8, .2)--(.8, .2);
	\draw (0, 0)--(0, .8);
	\draw (-.4, -.2) arc(90:270: .2cm) -- (.4, -.6) arc(-90:90: .2cm);
	\nbox{unshaded}{(0,0)}{.4}{0}{0}{$y$}
	\node at (-.8, .4) {{\scriptsize{$k-1$}}};
	\node at (.8, .4) {{\scriptsize{$k-1$}}};
\end{tikzpicture}\, .
$$
The map $E_{k}$ is operator norm continuous, so $E_{k}$ takes $\cA_{k}$ onto $\cA_{k-1}$. 
The following lemma as in \cite{MR860811} is easily verified on diagrams and then on all of $\cA_{k+1}$ by continuity.

\begin{lem}[Pull-down]\label{lem:pulldown}
For all $x \in \cA_{k+1}$, $xe_{k} = \delta^{2}E_{k+1}(xe_{k})e_{k}$.
\end{lem}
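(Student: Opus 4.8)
The plan is to prove the identity on a norm-dense subset and then extend it to all of $\cA_{k+1}$ by continuity. Both sides of $x e_k = \delta^2 E_{k+1}(x e_k) e_k$ are norm-continuous functions of $x$: left and right multiplication by the fixed element $e_k$ are norm-continuous, and $E_{k+1}$ is operator-norm continuous, as observed just above the lemma. Since $\Gr_{k+1}$ is norm-dense in $\cA_{k+1}$, it therefore suffices to verify the identity when $x$ is a single diagram $x \in \cP_{2(k+1)+n} \subset \Gr_{k+1}$, drawn in the graded convention with $k+1$ strands on each side and $n$ through-strands; all products are taken with the graded multiplication $\wedge_{k+1}$.

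For such $x$, I would compute the three diagrams in turn. Writing $e_k = \delta^{-1} D$ with $D$ the Temperley--Lieb cup-cap diagram ($k-1$ straight strands together with a cup-cap on the last strand), the product $x e_k = \delta^{-1}(x \wedge_{k+1} D)$ attaches the cup of $D$ to the rightmost through-strand of $x$. Applying $E_{k+1}$ then caps off the bottom through-strand and contributes a factor $\delta^{-1}$, landing in $\cA_k$; right-multiplying the result by $e_k$ reintroduces the last strand and the cup-cap $D$, contributing a second factor $\delta^{-1}$. The heart of the calculation is a planar isotopy: the strand that $E_{k+1}$ caps and that the second copy of $e_k$ reconnects can be straightened, so that after reduction the diagram underlying $\delta^2 E_{k+1}(x e_k) e_k$ is isotopic to that of $x e_k$. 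The bookkeeping then reads off cleanly: the prefactor $\delta^2$, the two factors $\delta^{-1}$ coming from $E_{k+1}$ and from the second $e_k$, and any closed loops produced in the reduction combine to reproduce exactly the diagram of $x e_k$.

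This is the diagrammatic incarnation of the standard pull-down lemma for a basic construction \cite{MR860811}, in which the coefficient is the Jones/Watatani index of the underlying inclusion; here the inclusion $\cA_{k-1} \subset \cA_k$ has index $\delta^2$, which is precisely the prefactor appearing. I do not expect any conceptual obstacle. The only points demanding care are purely bookkeeping: reading the diagrams of $e_k$, $E_{k+1}$, and the inclusion $\cA_k \hookrightarrow \cA_{k+1}$ consistently in the graded convention, and checking that the powers of $\delta$ (the explicit constants in the definitions of $e_k$ and $E_{k+1}$, together with the closed loops created by the isotopy) combine to the index $\delta^2$. Once the identity holds on diagrams, the continuity argument of the first paragraph promotes it to all of $\cA_{k+1}$, completing the proof.
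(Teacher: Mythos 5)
Your proposal is correct and matches the paper's own argument, which is exactly the one-line remark preceding the lemma: the identity ``is easily verified on diagrams and then on all of $\cA_{k+1}$ by continuity,'' using the norm-continuity of $E_{k+1}$ and of multiplication by $e_k$ together with the density of $\Gr_{k+1}$. Your diagrammatic bookkeeping is also right (the isotopy produces no closed loops, so $\delta^2\cdot\delta^{-1}\cdot\delta^{-1}=1$ and the diagram of $xe_k$ is recovered exactly), so your write-up is in fact more detailed than what the paper records.
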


\begin{prop}\label{prop:basis}
The inclusion $\cA_k\subset \cA_{k+1}$ is of index-finite type.
\end{prop}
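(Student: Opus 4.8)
The plan is to exhibit a finite quasi-basis for the inclusion $\cA_k\subset\cA_{k+1}$ with respect to the conditional expectation $E_{k+1}\colon\cA_{k+1}\to\cA_k$, which (as noted just above) is operator-norm continuous. The key observation is that the inclusion $\cA_k\hookrightarrow\cA_{k+1}$ merely adds one strand at the bottom and $E_{k+1}$ caps off precisely that strand, so the data controlling the index is ``local'' to a single strand and is already visible at the finite-dimensional level.

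First I would pass to a finite-dimensional inclusion. Each $\cP_m$ is a finite-dimensional $C^{*}$-algebra, so the unital inclusion $\cP_{2k}\hookrightarrow\cP_{2k+2}$ (the image being those diagrams whose bottom strand runs straight through) together with the restriction of $E_{k+1}$ to $\cP_{2k+2}$---which is the trace-preserving conditional expectation onto the image of $\cP_{2k}$ given by capping the bottom strand and multiplying by $\delta^{-1}$---admits a finite Pimsner--Popa basis $\{\lambda_j\}_{j=1}^m\subset\cP_{2k+2}\subset\cA_{k+1}$. I would take it $*$-closed, so that $\{(\lambda_j,\lambda_j^{*})\}$ is the candidate quasi-basis, and note that $\sum_j\lambda_j\lambda_j^{*}$ computes the Watatani index, which should match the Jones index $\delta^{2}$ of $\cM_k\subset\cM_{k+1}$.

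The heart of the argument is to promote this to the whole algebra: I claim $x=\sum_j\lambda_j\,E_{k+1}(\lambda_j^{*}x)$ for every $x\in\cA_{k+1}$ (one of the equivalent Pimsner--Popa relations). By norm-continuity of $E_{k+1}$ and of multiplication, it suffices to verify this on the dense subalgebra $\Gr_{k+1}$, i.e.\ for $x\in\cP_{2k+2+n}$. Here the verification is diagrammatic: since each $\lambda_j$ has no strands on top, the products $\lambda_j^{*}x$ and $\lambda_j\,E_{k+1}(\lambda_j^{*}x)$ involve no cup terms, so the $n$ top strands of $x$ are spectators carried along, planar-disjoint from both the $\lambda_j$ and the bottom cap of $E_{k+1}$. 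Thus the identity reduces to the Pimsner--Popa relation for the single added strand, which holds by the choice of $\{\lambda_j\}$; the pull-down lemma (Lemma \ref{lem:pulldown}) is exactly the tool that lets one slide the bottom cap past the top strands to make this rigorous. Establishing the quasi-basis relation then shows $\cA_k\subset\cA_{k+1}$ is of index-finite type.

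I expect the main obstacle to be this last step: verifying that a basis chosen at the $n=0$ level genuinely resolves the identity uniformly across all graded components $\cP_{2k+2+n}$, i.e.\ that adding a strand is transparent to the strands already present. One must check that the filtered multiplication introduces no unexpected interaction between the new strand and the existing ones; the absence of top strands on the $\lambda_j$ is what guarantees this, but it warrants an explicit diagrammatic check rather than an appeal to the finite-dimensional case alone.
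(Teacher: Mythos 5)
Your reduction to the finite-dimensional inclusion $\cP_{2k}\subset\cP_{2k+2}$ does not work, and the gap is exactly at the step you flagged. The top strands of $x$ are indeed spectators for multiplication by the degree-zero elements $\lambda_j$, but they are \emph{not} spectators for the expectation $E_{k+1}$: the bottom cap joins the two lowest boundary points of $\lambda_j^*x$, and inside the box $x$ those points may be wired to the top strands, so the cap genuinely changes the planar connectivity of $x$ rather than acting only on ``the added strand.'' The smallest example already kills the claim: take $\cP_\bullet=\TL_\bullet(\delta)$ and $k=0$. Then $\cP_{2k}\cong\cP_{2k+2}\cong\C$, so up to a scalar the only candidate basis is $\{1_1\}$, and your identity $x=\sum_j\lambda_j E_{k+1}(\lambda_j^*x)$ degenerates to $x=E_1(x)$ for every $x\in\cA_1$, i.e.\ $\cA_1=\cA_0$, which is absurd. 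Concretely, for the diagram $x\in\cP_{1,2,1}$ whose left boundary point runs to the first top point and whose right boundary point runs to the second, one has $E_1(x)=\delta^{-1}\cup\in\cP_{0,2,0}$, and its image back in $\cA_1$ (a through string at the bottom with a cup floating above it) is a different Temperley--Lieb diagram than $x$. Note also that your index check already fails here: $\sum_j\lambda_j\lambda_j^*=1_1$, so the finite-dimensional Watatani index is $1$, not $\delta^2$. The pull-down lemma cannot repair any of this: it concerns elements of the form $xe_k$ and says nothing about sliding a cap past top strands.

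Structurally, the relation you want is equivalent to $\sum_j\lambda_j e_{k+1}\lambda_j^*=1_{k+2}$ inside $\cA_{k+2}$, i.e.\ to the statement that $k+2$ through strings factor through $k$ strings using only elements of $\cP_{2k+2}$. That is precisely the finite-depth factorization used in Theorem \ref{thm:FiniteDepth}, and it holds only when $\depth(\cP_\bullet)\leq k+1$; it fails for small $k$ and fails for all $k$ in infinite depth. So no basis lying in the finite-dimensional part $\cP_{2k+2}$ can exist in general: the index data is \emph{not} local to the added strand, which is also the moral of Theorem \ref{thm:PPSize} and Example \ref{ex:GroupAction}, where $K$-theoretic obstructions rule out bases of the naively expected size and form. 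The paper's proof is instead global and analytic: $\cA_{k+1}$ is unital and \emph{simple} (Section \ref{sec:properties}), so Watatani's lemma \cite[Lemma 2.1.6]{MR996807} yields a finite set $\{x\}\subset\cA_{k+1}$ with $\sum_{x} x e_k x^*=1_{k+1}$, and then the pull-down lemma (Lemma \ref{lem:pulldown}) replaces each $xe_k$ by $be_k$ with $b=\delta^2E_{k+1}(xe_k)\in\cA_k$, producing the Pimsner--Popa basis. Simplicity---proved by free probability, not diagrammatics---is the essential input, and it is invisible at the finite-dimensional level.
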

\begin{proof}
Since $\cA_{k+1}$ is unital and simple, by \cite[Lemma 2.1.6]{MR996807}, we are guaranteed the existence of a finite subset $\{x\} \subset \cA_{k+1}$ such that $\sum_{x} xe_k x^*= 1_{k+1}$.
Lemma \ref{lem:pulldown} implies that for each of the $x\in \{x\}\subset \cA_{k+1}$, there is a $b\in \cA_k$ such that $xe_k=be_k$, which immediately gives us $\sum_{b} be_kb^{*} = 1_{k+1}$.
\end{proof}

\begin{cor}
The Watatani index $[\cA_{k}:\cA_{k-1}]$ equals the Jones index $[\cM_{k}:\cM_{k-1}]=\delta^2$.
\end{cor}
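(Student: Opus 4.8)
The plan is to let a single Pimsner--Popa basis compute both indices at once, so that the corollary falls out of Proposition \ref{prop:basis} together with the fact that the $C^*$-inclusion and the von Neumann inclusion share the same Jones projection and conditional expectation. First I would apply Proposition \ref{prop:basis} to the inclusion $\cA_{k-1}\subset \cA_k$: it yields a finite set $\{b\}\subset \cA_{k-1}$ with $\sum_b b e_{k-1} b^* = 1_k$. By the Remark following Definition \ref{defn:WatataniTower}, this is a Pimsner--Popa basis for $\cA_k$ over $\cA_{k-1}$, so it provides a quasi-basis in the sense of Watatani \cite{MR996807}, and the Watatani index is the positive central element $[\cA_k:\cA_{k-1}]=\sum_b bb^*$. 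Since $\cA_k$ is simple and unital (by Theorems \ref{thm:SimpleTrace1} and \ref{thm:SimpleTrace2} together with Corollary \ref{cor:allME}), its center is $\C 1_k$, so this index is in fact a positive scalar.

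Next I would observe that the very same set $\{b\}$ is a Pimsner--Popa basis for the von Neumann inclusion $\cM_{k-1}\subset \cM_k$. Indeed $\cA_k\subset \cM_k$, the Jones projection $e_{k-1}$ is literally the same element of both algebras, and the trace-preserving conditional expectation $E_k:\cM_k\to \cM_{k-1}$ restricts to the norm-continuous expectation $\cA_k\to \cA_{k-1}$ used above, both being given by the same diagram. Hence the identity $\sum_b b e_{k-1} b^* = 1_k$ persists verbatim in $\cM_k$, so $\{b\}$ is also a Pimsner--Popa basis for $\cM_{k-1}\subset \cM_k$, and the corresponding Watatani index of the factor inclusion is computed by the identical expression $\sum_b bb^*$.

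Finally I would identify this common value with the Jones index. For the ${\rm II}_1$-subfactor $\cM_{k-1}\subset \cM_k$, Watatani's index agrees with the Jones index \cite{MR996807,MR696688}, and by the GJS construction \cite{MR2732052} the Jones index $[\cM_k:\cM_{k-1}]$ equals $\delta^2$. Stringing the three steps together gives $[\cA_k:\cA_{k-1}]=\sum_b bb^* = [\cM_k:\cM_{k-1}]=\delta^2$, as desired.

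The only delicate point is the transfer of the quasi-basis between the two completions: one must check that the relation $\sum_b b e_{k-1} b^* = 1_k$, which a priori is extracted inside the $C^*$-basic construction, is genuinely the \emph{same} relation in the factor setting, so that both Watatani indices are the same element of the respective algebra. This reduces entirely to the agreement of the conditional expectations $E_k$ and of the Jones projection $e_{k-1}$ across $\cA_k$ and $\cM_k$, which is immediate from their common diagrammatic description; once that is in hand the scalar value $\delta^2$ is supplied for free by the ${\rm II}_1$-factor theory, and simplicity of $\cA_k$ guarantees there is no ambiguity coming from a nontrivial center.
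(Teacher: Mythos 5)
Your proposal follows essentially the same route as the paper, whose entire proof is the one-line observation that a Pimsner--Popa basis for $\cA_k$ over $\cA_{k-1}$ is also a basis for $\cM_k$ over $\cM_{k-1}$; your three steps (extract a basis from Proposition \ref{prop:basis}, transfer it verbatim to the factors via the shared Jones projection and conditional expectation, then quote the ${\rm II}_1$-factor identification of the Watatani and Jones indices) are exactly this argument, spelled out. One indexing slip should be corrected, however. In the paper's conventions, $e_{k-1}\in\cA_k$ implements $E_{k-1}\colon\cA_{k-1}\to\cA_{k-2}$, so the set you extract --- $\{b\}\subset\cA_{k-1}$ with $\sum_b b e_{k-1}b^*=1_k$ --- is a Pimsner--Popa basis for $\cA_{k-1}$ over $\cA_{k-2}$, not for $\cA_k$ over $\cA_{k-1}$. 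Indeed, a basis for $\cA_k$ over $\cA_{k-1}$ must consist of elements of $\cA_k$: if every $b$ lay in $\cA_{k-1}$, the reconstruction formula $x=\sum_b b\,E_k(b^*x)$ would force every $x\in\cA_k$ into $\cA_{k-1}$. The correct statement, which is what the proof of Proposition \ref{prop:basis} produces when applied to $\cA_k\subset\cA_{k+1}$, is a set $\{b\}\subset\cA_k$ with $\sum_b b e_k b^*=1_{k+1}$, the relation holding in $\cA_{k+1}=\langle \cA_k,\cA_{k-1}\rangle$; this is the basis whose index $\sum_b bb^*$ equals $[\cA_k:\cA_{k-1}]$. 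As written, your first step computes $[\cA_{k-1}:\cA_{k-2}]$ instead; since $k$ is arbitrary this is only a relabeling and the theorem still follows, but the sentence identifying your $\{b\}$ as a basis for $\cA_k$ over $\cA_{k-1}$ is literally false and needs the shift. The remaining steps --- the relation persists in the von Neumann completion because $e_k$ and $E_k$ are the same diagrammatic objects in both settings, and ${\rm II}_1$-factor theory gives $\sum_b bb^*=[\cM_k:\cM_{k-1}]=\delta^2$ --- are correct; the appeal to simplicity to see that the index is a scalar is harmless but not needed once the factor computation is invoked.
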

\begin{proof}
A Pimsner-Popa basis for $\cA_k$ over $\cA_{k-1}$ is also a basis for $\cM_k$ over $\cM_{k-1}$.
\end{proof}

\begin{prop}
$\cA_0'\cap \cA_n = \cP_{2n}$.
\end{prop}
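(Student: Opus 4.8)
The plan is to reduce the $C^*$-algebraic relative commutant to the von Neumann algebraic one, which is already known. Recall that $(\cM_k)_{k\geq 0}$ is a Jones tower recovering $\cP_\bullet$, so that $\cM_0'\cap \cM_n=\cP_{2n}$ (the unshaded analogue of the identity $\cM_{0,\pm}'\cap \cM_{k,\pm}\cong \cP_{2k,\pm}$ from \cite{MR2732052} recalled in Section \ref{sec:GradedAndFiltered}). I regard all the algebras as acting on $L^2(\Gr_n,\tau_n)$, so that $\cA_0\subseteq \cM_0\subseteq \cM_n$ and $\cA_n\subseteq \cM_n$ all sit inside $B(L^2(\Gr_n,\tau_n))$, and I prove the two inclusions $\cP_{2n}\subseteq \cA_0'\cap \cA_n$ and $\cA_0'\cap \cA_n\subseteq \cP_{2n}$ separately.

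For the inclusion $\cP_{2n}\subseteq \cA_0'\cap \cA_n$: the degree-zero summand $\cP_{n,0,n}=\cP_{2n}$ of $\Gr_n$ is a finite-dimensional $C^*$-subalgebra of $\cA_n$, since for elements with no upward strings the product of Facts \ref{rem:WalkerMult} collapses to horizontal concatenation of the $n$ through-strings, which is exactly the planar-algebra product on $\cP_{2n}$. As $\cP_{2n}=\cM_0'\cap \cM_n\subseteq \cM_0'$, every element of $\cP_{2n}$ commutes with $\cM_0$, hence with $\cA_0\subseteq \cM_0$; thus $\cP_{2n}\subseteq \cA_0'\cap \cA_n$.

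For the reverse inclusion, let $x\in \cA_0'\cap \cA_n$, so $x\in \cA_n\subseteq \cM_n$ and $x$ commutes with $\cA_0$. Left and right multiplication by the fixed operator $x$ are WOT-continuous, so the commutant $\{x\}'$ is WOT-closed in $B(L^2(\Gr_n,\tau_n))$. Since $\cM_0=\cA_0''$ and the Jones-tower inclusion $\cM_0\hookrightarrow \cM_n$ is trace-preserving, hence normal, $\cA_0$ remains WOT-dense in $\cM_0$ in this representation. Therefore $\{x\}'\supseteq \cA_0$ forces $\{x\}'\supseteq \cM_0$, i.e. $x\in \cM_0'$, whence $x\in \cM_0'\cap \cM_n=\cP_{2n}$. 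Combining the two inclusions proves the proposition.

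The one step requiring care is this weak-density passage: $\cA_0$ is by construction WOT-dense in $\cM_0$ for the action on $L^2(\Gr_0,\tau_0)$, and I must know it stays WOT-dense after the inclusion into $\cM_n$ on $L^2(\Gr_n,\tau_n)$. This holds because that inclusion is a normal $*$-representation of $\cM_0$ (it restricts to the standard form on the reducing subspace $L^2(\Gr_0,\tau_0)\subseteq L^2(\Gr_n,\tau_n)$), and a normal representation carries a WOT-dense subalgebra of the unit ball to a WOT-dense subalgebra of the image unit ball; Kaplansky density then upgrades this to all of $\cM_0$. Everything else is routine bookkeeping of the diagrammatic inclusions $\cP_{2n}\subseteq \cA_n$ and $\cA_0\subseteq \cM_0\subseteq \cM_n$.
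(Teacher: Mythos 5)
Your proof is correct and follows essentially the same route as the paper's one-line argument $\cP_{2n} \subseteq \cA_0'\cap \cA_n = \cM_0'\cap \cA_n \subseteq \cM_0'\cap \cM_n = \cP_{2n}$, where the middle equality is exactly your WOT-density step (commuting with $\cA_0$ is the same as commuting with $\cA_0''=\cM_0$) and the final equality is the cited result of \cite{MR2732052}. You have merely spelled out the normality/Kaplansky justification that the paper leaves implicit.
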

\begin{proof}
$\cP_{2n} \subseteq \cA_0'\cap \cA_n =\cM_0'\cap \cA_n \subseteq \cM_0'\cap \cM_n = \cP_{2n}$ by \cite{MR2732052}.
\end{proof}

\begin{defn}
Define the projection 
$e_{k} = \delta^{-1}\,
\begin{tikzpicture} [baseline = -.1cm]
	\nbox{}{(0,0)}{.4}{0}{0}{}
	\draw (-.4, .1)--(.4, .1);
	\draw (-.4, 0) arc(90:-90:.15cm);
	\draw (.4, 0) arc(90:270:.15cm);
	\node at (0, .25) {\scriptsize{$k-1$}};
\end{tikzpicture}
\in \cA_{k+1}$.

We have $e_{k} x e_{k} = E_{k}(x)e_{k}$ for all $x \in \cA_{k}$ viewed as an element in $\cA_{k+1}$ under the identification above.
Hence $\cA_{k+1}$ is canonically isomorphic to the basic construction of $\cA_{k-1}\subset \cA_{k}$.
\end{defn}

\begin{defn}
Given a positive real number, $a$, we define $N(a) = \min\set{n \in \N}{n > a}$.
Note that if $a \in \N$, then $N(a) = a+1$.  
\end{defn}

\begin{rem}
Note that by \cite{MR860811} there exists a Pimsenr-Popa basis of $\cM_{k}$ over $\cM_{k-1}$ of sizes $N(\delta^{2})$ for $\delta^{2} \not\in \N$ and $\delta^{2}$ for $\delta^{2} \in \N$.   
Our results on comparability of projections and $K_{0}(\cA_{0})$ allow us to prove a slightly different version of this result for the $C^*$-tower.
\end{rem}

\begin{thm}\label{thm:PPSize}
Suppose $\cP_\bullet$ has modulus $\delta$.  Then for each $k$, there exists a Pimsner-Popa basis of size $N(\delta^{2})$ for $\cA_{k}$ over $\cA_{k-1}$, and the bound is sharp in the following sense.
For every $\delta$ (even when $\delta^{2} \in \N$), there exists a factor planar algebra $\cP_\bullet$ with modulus $\delta$ so that there is no Pimsner-Popa basis for $\cA_{1}$ over $\cA_{0}$ with size less than $N(\delta^{2})$.
\end{thm}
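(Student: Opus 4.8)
The plan is to recast the existence of a Pimsner--Popa basis of a prescribed size as a $K$-theoretic condition on the class of $\cA_k$ as a right $\cA_{k-1}$-module, and then to read off both halves of the theorem from the trace, the comparison theorem, the positive cone computation, and stable rank~$1$. By Proposition~\ref{prop:basis} the inclusion $\cA_{k-1}\subset\cA_k$ is of index-finite type, so $\cA_k$ is a finitely generated projective right $\cA_{k-1}$-module; write $\eta_k=[\cA_k]\in K_0(\cA_{k-1})$ for its class. A basis $\{b_i\}_{i=1}^n$ exhibits $\cA_k$ as a complemented submodule of the free module $\cA_{k-1}^{\,n}$, producing the projection $P=(E_k(b_i^*b_j))_{i,j}\in M_n(\cA_{k-1})$ with $[P]=\eta_k$. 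Conversely, since $\cA_{k-1}$ has stable rank~$1$ and hence cancellation of projections, any projection of class $\eta_k$ in $M_n(\cA_{k-1})$ represents the module $\cA_k$, and its standard frame transports back to a basis of size $n$. Thus a basis of size $n$ exists iff $\eta_k$ is represented by a projection in $M_n(\cA_{k-1})$. Normalizing so that $\Tr(p_\star)=\Tr(1_0)=1$, the corollary $[\cA_k:\cA_{k-1}]=\delta^2$ gives $\Tr(\eta_k)=\delta^2$, while $\Tr(1_{M_n})=n$.

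Next I would decide exactly when $\eta_k$ is representable in $M_n(\cA_{k-1})$, using the positive cone (Theorems~\ref{thm:SimpleTrace1} and~\ref{thm:SimpleTrace2}) and comparison of projections (Theorem~\ref{thm:compareproj}) inside $\cA_\I=\cA_0\otimes\cK$. If $n>\delta^2$, then $\Tr(n[1_0]-\eta_k)=n-\delta^2>0$, so $\eta_k$ is dominated by $n[1_0]$ and is realized by a projection $P\le 1_{M_n}$; if $n<\delta^2$ no such $P$ exists, since it would force $\delta^2=\Tr(\eta_k)=\Tr(P)\le\Tr(1_{M_n})=n$; and if $\delta^2\in\N$ and $n=\delta^2$, then $\Tr(1_{M_n}-P)=0$ forces $P=1_{M_n}$ by faithfulness of the unique trace, so a basis of size $\delta^2$ exists iff $\eta_k=\delta^2[1_0]$. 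Since $N(\delta^2)>\delta^2$ always, the first assertion follows at once, uniformly in $k$.

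For sharpness it suffices to examine $\cA_1$ over $\cA_0$; set $\eta=\eta_1$. If $\delta^2\notin\N$, then every basis has size $\ge N(\delta^2)$ by the criterion, so \emph{any} factor planar algebra of modulus $\delta$ works. If $\delta^2=m\in\N$, the criterion shows that a basis of size $m=N(\delta^2)-1$ exists exactly when $\eta=m[p_\star]$, so I must produce a planar algebra with $\eta\ne m[p_\star]$. I would take $\cP_\bullet=\TL_\bullet(\sqrt m)$, whose modulus is $\sqrt m>1$ and which (as $m\ge2$) has a genuine depth-$2$ vertex $v_2\ne\star$. Using the Remark that $\cA_1$ is an $\cA_2$--$\cA_0$ equivalence bimodule, $\eta$ corresponds under the induced isomorphism $K_0(\cA_2)\cong K_0(\cA_0)$ to $[1_2]\in K_0(\cA_2)$; decomposing the two-strand identity $1_2=\jw2+e_\star$ into the Jones--Wenzl projection $\jw2$ (representing $v_2$) and the minimal projection $e_\star=1_2-\jw2$ (representing $\star$) identifies $\eta=[p_\star]+[p_{v_2}]$. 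As $[p_{v_2}]$ is an independent free generator of $K_0(\cA_0)\cong\Z[\set{[p_\alpha]}{\alpha\in V(\Gamma)}]$, we get $\eta\ne m[p_\star]$, so no basis of size $<N(\delta^2)$ exists.

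The main obstacle is the identification of $\eta$ in the sharpness step: one must verify that the $K_0$-isomorphism arising from the equivalence bimodule $\cA_1$ respects the canonical generators $[p_\alpha]$ inherited from $\cA_\I$, so that $\eta$ really is the class of $[1_2]$ in those coordinates, and that the decomposition of $1_2$ splits off a copy of the depth-$2$ vertex. The trace identity $\Tr[1_2]=\delta\sum_{\beta\sim\star}\mu(\beta)=\delta^2$, which follows from the Frobenius--Perron condition, is a reassuring consistency check but is not by itself conclusive, precisely because $m[p_\star]$ has the same trace; this is why the free-generator structure of $K_0$ is doing the essential work. Everything else reduces to bookkeeping with the unique trace and the structure theorems already established.
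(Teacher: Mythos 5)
Your first half---existence of a basis of size $N(\delta^2)$---is correct, and it runs on the same fuel as the paper's proof (stable rank 1, the positive cone of $K_0$, comparison of projections, faithfulness of the trace), merely repackaged: the paper constructs the basis directly in $\cA_\I$ from Theorem \ref{thm:compareproj} and the pull-down Lemma \ref{lem:pulldown}, while you encode it as representability of the module class $\eta_k$ in $M_n(\cA_{k-1})$. The trace obstruction for sizes below $\delta^2$ likewise matches the paper's argument. The problem is the sharpness step when $\delta^2=m\in\N$, and it sits exactly where you flagged it: your argument needs $\eta=[p_\star]+[p_{v_2}]$ in the coordinates $\Z[\set{[p_\alpha]}{\alpha\in V(\Gamma)}]$, but you obtain this by transporting $[1_2]$ through the $K_0$-isomorphism induced by the basic-construction equivalence bimodule $\cA_1$, whereas the decomposition $[1_2]=[\jw{2}]+[1_2-\jw{2}]=[p_{v_2}]+[p_\star]$ is computed in $K_0(\cA_\I)$ via the corner embeddings. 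These are two a priori different isomorphisms $K_0(\cA_2)\cong K_0(\cA_0)$, and nothing you have established forces them to agree. This is a genuine gap, not a formality: for $\TL_\bullet(\sqrt{2})$ the principal graph is $A_3$ with weights $(1,\sqrt{2},1)$, so $K_0(\cA_0)\cong\Z^3$ admits an order- and trace-preserving automorphism swapping $[p_\star]$ and $[p_{v_2}]$; a trace-compatible identification therefore cannot distinguish $[p_\star]+[p_{v_2}]$ from $2[p_\star]$, which is precisely the difference between your desired conclusion and its negation. (This is the same phenomenon as the paper's remark that $\Tr(\jw{0})=\Tr(\jw{2})$ while $[\jw{0}]\neq[\jw{2}]$.)

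The gap can be closed, and the paper's own device is the cleanest way: avoid the abstract bimodule altogether. If $\{b_i\}_{i=1}^m$ were a basis, form the row $v=(b_1e_1,\dots,b_me_1)$ over $\cA_2=1_2\cA_\I 1_2$ (with $b_i$ the images of the basis elements in $\cA_2$); then $vv^*=1_2$ and $v^*v\le e_1\otimes\id_m$, so $[1_2]\le m[e_1]$ holds directly in $K_0(\cA_\I)$, where every class is unambiguous. The partial isometry $c$ given by $\delta^{-1/2}$ times the cup diagram in $1_2\cA_\I 1_0$ satisfies $c^*c=1_0$ and $cc^*=e_1$, so $[e_1]=[p_\star]$; then the positive cone plus faithfulness of $\Tr$ forces $[1_2]=m[p_\star]$, contradicting $[1_2]=[p_\star]+[p_{v_2}]$ and the freeness of the generators. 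Alternatively, to salvage your module-theoretic route you must actually prove the compatibility: the pull-down lemma gives an $\cA_2$--$\cA_0$ bimodule isomorphism $\cA_1\cong\cA_2e_1$ via $b\mapsto be_1$, which reduces your identification to the claim that $[\iota(p)e_1]=[p]$ in $K_0(\cA_\I)$ for projections $p$ over $\cA_0$ (here $\iota$ is the tower inclusion $\cA_0\hookrightarrow\cA_2$); this can be checked with the explicit partial isometry $\iota(p)\,c\,p$. Either way, that verification is a necessary component of the proof, not a deferred consistency check.
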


\begin{rem}
For the proof of Theorem \ref{thm:PPSize}, we will work in $\cA_\infty$.
Note that the algebras $\cA_k$ are \underline{not} identified with their images in $\cA_{k+1}$ under the inclusion above when we consider them as corners of $\cA_\infty$.
\end{rem}

\begin{proof}[Proof of Theorem \ref{thm:PPSize}]
We first note that in $\cA_{\I}$, $\tr(e_{k}) = \delta^{k-1}$ and $\tr(1_{k+1}) = \delta^{k+1} = \delta^{2}\tr(e_{k})$.  
Set $n = N(\delta^{2})$, and let $p_{1}, \dots, p_{n}$ be orthogonal projections in $\cA_{\I}$ each of which is equivalent to $e_{k}$.  
By Theorem \ref{thm:compareproj}, these exists $w \in \cA_{\I}$ with $w^{*}w = 1_{k+1}$ and $ww^{*} \leq \sum_{i=1}^{n} p_{i}$.  
Let $v_{i} \in \cA_{\I}$ be so that $v_{i}^{*}v_{i} = e_{k}$ and $v_{i}v_{i}^{*} = p_{i}$ for $1 \leq i \leq n$.  
Note that for each $i$, $w^{*}v_{i} \in \cA_{k+1}$, and
$$
\sum_{i=1}^{n} w^{*}v_{i}e_{k}v_{i}^{*}w = \sum_{i=1}^{n} w^{*}v_{i}v_{i}^{*}w = \sum_{i=1}^{n} w^{*}p_{i}w = w^{*}w = 1_{k+1}.
$$
This shows that there exist $y_{1}, \dots, y_{n} \in \cA_{k+1}$ satisfying $\sum_{i=1}^{n} y_{i}e_{k}y_{i}^{*} = 1$.
Lemma \ref{lem:pulldown} produces a Pimsner-Popa basis of size $n$.

Now, assume $\delta^{2} \not\in \N$, and let $m \in \N$ with $m < N(\delta^{2})$ (so that $m < \delta^{2}$).  
If there exists a Pimsner-Popa basis $\{b_i\}_{i=1}^m$ of size $m$, then the matrix $v\in M_{1\times m}(\cA_{k+1})$ whose $(1,i)$-th entry is $b_ie_k$ has the property that $vv^{*} = 1_{k+1}$ and $v^{*}v \leq e_{k}\otimes \id_m$, where, $e_{k}\otimes \id_m\in M_m(\cA_{k+1})$ denotes the $m\times m$ matrix with $e_{k}$ along the diagonal.  
This contradicts $\Tr(1_{k+1}) > m\Tr(e_{k})$.

If $\delta^{2} \in \N$, then the above argument also implies that there is no Pimsner-Popa basis of size $m < \delta^{2}$ for $\cA_{k}$ over $\cA_{k-1}$ when $\cP_\bullet$ has depth at least 2.  
From our computation in $K$-theory, if $\cP_\bullet$ has depth at least 2, then $[1_{2}] \neq \delta^{2}[1_{0}]$.  
If there were a Pimsner-Popa basis $b_{1}, ..., b_{\delta^{2}}$, then the above matrix trick shows that in $K_0(\cA_0)$, $[1_{2}] \geq \delta^{2}[1_{0}]$.  
Since $\Tr(1_{2}) = \delta^{2} = \delta^{2}\Tr(1_{0})$ and $\Tr$ is faithful, this would imply $[1_{2}] = \delta^{2}[1_{0}]$, a contradiction.
\end{proof}

\subsection{Examples}

In this section, we will use our results on bases, as well as the results in Section \ref{sec:properties} to state some interesting consequences about $\cA_{0}$ and the $C^{*}$-tower. 

\begin{ex}\label{ex:GroupAction}
Suppose $\cP_{\bullet}$ is the subfactor planar algebra of the group subfactor $R \subset R\rtimes G$ for $G$ a finite group of order at least 2 acting by outer automorphisms on the hyperfinite II$_{1}$ factor $R$.   
Despite the fact that $\cM_{1, +} \cong \cM_{0, +} \rtimes G$ \cite[Corollary 1.1.6]{MR1111570}, we have $\cA_{1, +} \ncong \cA_{0, +} \rtimes G$.  
Indeed, if this equality held, then there would be a Pimsner-Popa basis of unitaries $\set{u_{g}}{g \in G}$ of size $|G|$ for $\cA_{1, +}$ over $\cA_{0, +}$.  
In $K$-theory, this means $[1_{2, +}] = |G|[1_{0, +}]$, which is impossible since $\cP_{\bullet}$ has depth 2.

In particular, if $\cP_{\bullet} = \TL_{\bullet}(\sqrt{2})$, then $[\cA_{1}:\cA_{0}] = 2$, but this $K$-theoretical obstruction shows that $\cA_{1}\ncong \cA_{0}\rtimes \Z/(2\Z)$.
This is in stark contrast with Goldman's Theorem \cite{MR0107827}, which states that index two II$_1$-subfactors are always of the form $M\subset M\rtimes \bbZ/2\bbZ$.
\end{ex}

\begin{ex}\label{ex:NotME}
 If $\cP_{\bullet}$ is shaded, then for the von Neumann algebras $\cM_{k, \pm}$, we always have $\cM_{k, +} \cong \cM_{k, -}$, but  in general, $\cA_{k, +} \ncong \cA_{k, -}$.  
Indeed, if $\cP_{\bullet}$ is finite depth where $\Gamma_{+}$ and $\Gamma_{-}$ have a different numbers of vertices, e.g., $R \subset R\rtimes G$ for $G$ finite and nonabelian, then the two algebras are not even Morita equivalent as they have non-isomorphic $K_{0}$-groups.
\end{ex}

\begin{ex}
Our analysis also has an application to subalgebras of a free semicircular system.   
Let $\cP_{\bullet}$ be the planar subalgebra of $\C\langle X_{1}, \dots, X_{n} \rangle$ for $n \geq 2$ which is generated by the monomials $X_{i}X_{j}$.  
Notice that $\cP_{\bullet}$ has zero-dimensional odd box spaces.  
The principal graph of $\cP_{\bullet}$ consists of two vertices joined together by $n$ edges.

The algebra $\cA_{0}$ in this case is the $C^{*}$-subalgebra of Voiculuecu's free semicircular algebra $\tilde{\cA} = \overline{\C\langle X_{1}, \dots, X_{n}\rangle}^{\|\cdot\|}$ which is generated by the monomials $X_{i}X_{j}$.  
$\cA_{0}$ can be seen as the fixed points of the outer action of $\Z/(2\Z)$ on $\tilde{\cA}$ given by $X_{i} \mapsto -X_{i}$. Despite the fact that $\cA_{0}$ is finite and projectionless, it has a nontrivial $K_{0}$-group, namely $K_{0}(\cA_{0}) \cong \Z^{2}$.

\end{ex}

\bibliographystyle{amsalpha}

{\footnotesize
\bibliography{../bibliography}
}
\end{document}